\titleformat{\section}[block]{\centering\Large\bfseries}{\thesection}{1em}{}
\titleformat{\subsection}[block]{\centering\large\bfseries}{\thesubsection}{1em}{}
\titleformat{\subsubsection}[block]{\centering\normalsize\bfseries}{\thesubsubsection}{1em}{}
\newtheorem{theorem}{\textbf{Theorem}}[section]
\newtheorem{lemma}{\textbf{Lemma}}[section]
\newtheorem{proposition}{\textbf{Proposition}}[section]
\newtheorem{corollary}{\textbf{Corollary}}[section]
\newtheorem{remark}{\textbf{Remark}}[section]
\newtheorem{definition}{\textbf{Definition}}[section]
\providecommand{\keywords}[1]{\textbf{\textit{Keywords---}} #1}
\def\be{\begin{equation}}
\def\ee{\end{equation}}
\def\bea{\begin{eqnarray}}
\def\eea{\end{eqnarray}}
\def\bt{\begin{theorem}}
\def\et{\end{theorem}}
\def\bl{\begin{lemma}}
\def\el{\end{lemma}}
\def\br{\begin{remark}}
\def\er{\end{remark}}
\def\bp{\begin{proposition}}
\def\ep{\end{proposition}}
\def\bc{\begin{corollary}}
\def\ec{\end{corollary}}
\def\bd{\begin{definition}}
\def\ed{\end{definition}}
\def\Pi{\mathbf{\psi}}
\def\R{{\mathbb R}}
\setlist[enumerate]{leftmargin=24pt, itemsep=0pt, parsep=0pt, topsep=0pt, partopsep=0pt,  label=\bf{\textcolor{blue}{(\arabic*)}}}
\title{\bf{On the instability and stability of 
 non-homogeneous fluid in a bounded domain under the influence of a general potential}}
\author{
Liang Li$^{a}$
\thanks{llbohou@gzhu.edu.cn}
, Tao Tan$^{b}$\thanks{ tantao1@stu.scu.edu.cn}
and
Quan Wang$^{b}$
\thanks{Corresponding author:xihujunzi@scu.edu.cn }
\\ \footnotesize $^a$ School of Mathematics and Information Science,
\footnotesize Guangzhou University,
\\\footnotesize Guangzhou,
 Guangdong, 510000, China
  \\ \footnotesize $^{b}$ College of Mathematics, Sichuan University,
  \footnotesize
 Chengdu, Sichuan, 610065,  China
}
\date{}
\begin{document}
\maketitle

\begin{abstract}
We investigate the instability and stability of specific steady-state solutions of the two-dimensional non-homogeneous, incompressible, and viscous Navier-Stokes equations under the influence of a general potential $f$. This potential is commonly used to model fluid motions in celestial bodies. First, we demonstrate that the system admits only steady-state solutions of the form \(\left(\rho,\mathbf{V},p\right)=\left(\rho_{0},\mathbf{0},P_{0}\right)\), where $P_0$ and $\rho_0$
satisfy  the hydrostatic balance condition
$\nabla P_{0}=-\rho_{0}\nabla f$.
Additionally, the relationship between $\rho_0$ and the potential function $f$  is constrained by the condition
 $\left(\partial_{y}\rho_{0},-\partial_{x}\rho_{0}\right)\cdot\left(\partial_{x}f,\partial_{y}f\right)=0$, which allows us to express
 $\nabla\rho_{0}$ as $h\left(x,y\right)\nabla f$.
Second, when there exists a point $\left(x_{0},y_{0}\right)$ such that $h\left(x_{0},y_{0}\right)>0$,  we establish the linear instability of these solutions. Furthermore, we demonstrate their nonlinear instability in both the Lipschitz and Hadamard senses through detailed nonlinear energy estimates. This instability aligns with the well-known Rayleigh-Taylor instability.  Our study signficantly extends and generalizes the existing mathematical results, which have predominantly focused on the scenarios involving a uniform gravitational field characterized by $\nabla f=(0,g)$. Finally, we show that these steady states are linearly stable provided that $h\left(x,y\right)<0$ holds throughout the domain. Moreover, they exhibit nonlinear stability when $h\left(x,y\right)$ is a negative constant.
\end{abstract}

\keywords{general gravitational potential, steady-state solution, instability, stability.}
\newpage
\tableofcontents

\section{Introduction}
Fluid stability is a key research focus within nonlinear sciences. The main goal of investigating fluid stability is to improve our understanding of fluid flow dynamics and to devise methodologies and tools for effectively controlling and manipulating these flows in practical applications.  The extensive literature \cite{Friedlander2006,Friedlander2009,Galdi2022,Feng2023,Ji2023}
has been dedicated to the mathematical analysis of fluid stability and instability, highlighting its significance in both theoretical and applied contexts.

In this paper, we investigate the Rayleigh-Taylor (R-T) instability and stability of the nonhomogeneous incompressible and viscous Navier-Stokes equations (NIVNSE) within a bounded smooth domain  $\Omega$, subject to a general potential $f$.
The NIVNSE system reads
\begin{align}\label{moxing1121}
\begin{cases}
\partial_{t}\rho+\mathbf{V}\cdot\nabla\rho=0,
\\
\rho \partial_{t}\mathbf{V}+\rho\mathbf{V}\cdot\nabla\mathbf{V}+\nabla P=\mu\Delta\mathbf{V}-\rho \nabla f,
\\
\partial_{x}V_{1}+\partial_{y}V_{2}=0,
\end{cases}
\end{align}
where $\mathbf{V}$ is the velocity field, $\rho$ denotes the density, 
$P$ denotes the pressure, and \(f\) is a smooth potential function that can be employed to model the gravitational potential and other external forces.
In this study, the system \eqref{moxing1121} is subject to the following non-slip boundary condition:\begin{align}\label{wuguanghua0202}
\mathbf{V}|_{\partial\Omega}=\mathbf{0}.
\end{align}

The R-T instability, initially described by Rayleigh \cite{Rayleigh1882,Rayleigh1900} in the context of cirrus cloud formation and later by Taylor \cite{Taylor1950} for the scenario where a lighter fluid is positioned above a heavier fluid under downward acceleration,  
is a classic phenomenon in fluid dynamics. This hydrodynamic instability manifests not only at the interfacial boundary between immiscible fluids exhibiting significant density contrast, particularly when a denser fluid is superimposed above a less dense medium under external acceleration fields such as gravitational or inertial forces, but also within a single fluid when the derivative of its density is positive at a certain horizontal layer. Its importance extends across a wide range of disciplines, including astrophysics (e.g., supernova evolution \cite{Gull1975,Ribeyre2004}), geophysics (e.g., volcanic island formation, salt domes \cite{Wilcock1991,Mazariegos1996}, and lithospheric convection \cite{Chen2015}), nuclear physics(e.g., inertial confinement fusion \cite{Srinivasan2012}), and even industrial processes such as combustion \cite{Veynante2002} and underwater explosions \cite{Geers2002}. Beyond science,the R-T instability inspires artistic patterns in oil painting \cite{Delacalleja2014,Zetina2014}. For a detailed review of its applications, the reader is referred to \cite{Zhou2017,Zhou20171,Zhou2021}.

The NIVNSE system is a core model for describing fluid motion, with significant theoretical importance and practical application value. Thus, since its inception, it has attracted the research interest of numerous scholars \cite{Simon1978,Simon1990,Lions1996,Danchin2009} regarding well-posedness. For the well-posedness of \eqref{moxing1121}-\eqref{wuguanghua0202}, one can refer to \cite{hkim1987,Choe2003}. We simply list it as described in Lemma \ref{shidingxingwenti0202}. 
Given their wide applicability in science and engineering, there is extensive research on the R-T instability of the NIVNSE system. Jiang \cite{Jiang2013} investigated the nonlinear instability in the Lipschitz sense of the NIVNSE system in \(\mathbf{R}^{3}\) under a uniform gravitational field. Another study by Jiang \cite{Jiang2014} examined the nonlinear instability in Hadamard sense of some steady states of a three-dimensional NIVNSE system in a bounded domain of class \(C^{2}\) with Dirichlet-type boundary conditions. For more research on the R-T instability, one can consult the literature \cite{zhou2016,Livescu2021,Sonf2021,Cheung2023,Hwang2003,Guo2010,Jiang2019,Jiang20191,Peng2022,Mao2024,Xing2024} and the references therein. 

It is worth noting that the previous mathematical research \cite{Jiang2013,Jiang2014,zhou2016,Livescu2021,Sonf2021,Cheung2023,Hwang2003,Guo2010,Jiang2019,Jiang20191,Peng2022,Mao2024,Xing2024} on the R-T instability of the NIVNSE system have predominantly focused on the scenarios involving a uniform
gravitational field characterized by \(\nabla f=\left(0,g\right)\). 
However, in the context of geophysical and astrophysical fluid dynamics, when modeling fluid motions using the NIVNSE system, it is more physically accurate to replace the gravity term  $-\rho\left(0,g\right)^T$ by
$-\rho\nabla f$, where
$f$ is a potential function. This modification offers several advantages.
First, it better captures the effects of spatially varying gravitational forces. Second, it also accounts for other physical processes involved in fluid motions, such as chemical reactions of matter and radiation processes \cite{Michael2025}. Consequently, this model \eqref{moxing1121} is more amenable to incorporating these complex physical processes through extensions, providing a more accurate representation of the actual conditions in geophysical and astrophysical fluid dynamics.

\subsection{Exact steady states}
From a fluid mechanics perspective, steady-state solutions of hydrodynamic systems fundamentally characterize their equilibrium configurations. A rigorous mathematical characterization of these precise stationary states constitutes the essential foundation for investigating stability properties - determining whether infinitesimal perturbations will decay (stability) or grow exponentially (instability) - in equilibrated systems. Thus, at the beginning, we embark on an in-depth exploration of the steady-state solutions of \eqref{moxing1121}-\eqref{wuguanghua0202} and consider the following set of equations:
\begin{align}\label{wentifangcheng0201}
\begin{cases}
\mathbf{V}\cdot\nabla\rho=0,
\\
\rho\mathbf{V}\cdot\nabla\mathbf{V}+\nabla P=\mu\Delta\mathbf{V}-\rho\nabla f,
\\
\partial_{x}V_{1}+\partial_{y}V_{2}=0,~\mathbf{V}|_{\partial\Omega}=\mathbf{0}.
\end{cases}
\end{align}
Performing a careful analysis of \eqref{wentifangcheng0201},
we have derived the following conclusion.
\begin{lemma}\label{wentaijie0201}
Let \(\left(\mathbf{u},P_{0},\rho_{0}\right)\) be a classical solution of the hydrodynamic equilibrium equations \eqref{wentifangcheng0201}. This solution exhibits the following fundamental properties:
\begin{align}
\mathbf{u}=\mathbf{0},~\nabla P_{0}=-\rho_{0}\nabla f.
\end{align}
Additionally, the relationship between $\rho_0$ and $f$  is constrained by the condition
\begin{align}\label{wutian0201}
\left(\partial_{y}\rho_{0},-\partial_{x}\rho_{0}\right)\cdot 
\left(\partial_{x}f,\partial_{y}f\right)=0.
\end{align}
\end{lemma}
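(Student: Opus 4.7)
The plan is a standard three-step argument: an energy estimate to force $\mathbf{V}\equiv\mathbf{0}$, a direct substitution into the momentum equation to obtain the hydrostatic balance, and a curl operation to derive the compatibility condition \eqref{wutian0201}.

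First, I would establish $\mathbf{V}\equiv\mathbf{0}$. Taking the $L^2(\Omega)$ inner product of the momentum equation in \eqref{wentifangcheng0201} with $\mathbf{V}$ and integrating by parts, the inertial term $\int_\Omega \rho(\mathbf{V}\cdot\nabla)\mathbf{V}\cdot\mathbf{V}\,dx$ can be rewritten as $\tfrac{1}{2}\int_\Omega \rho\mathbf{V}\cdot\nabla|\mathbf{V}|^2\,dx = -\tfrac{1}{2}\int_\Omega \nabla\cdot(\rho\mathbf{V})|\mathbf{V}|^2\,dx$, where the boundary term vanishes by the no-slip condition. Combining the first and third equations of \eqref{wentifangcheng0201} gives $\nabla\cdot(\rho\mathbf{V})=\rho\,\nabla\cdot\mathbf{V}+\mathbf{V}\cdot\nabla\rho=0$, so this term drops out. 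The pressure term $\int_\Omega \nabla P\cdot\mathbf{V}\,dx$ vanishes by incompressibility and the boundary condition. For the potential term, writing $\rho\nabla f\cdot\mathbf{V}=\mathbf{V}\cdot\nabla(\rho f)-f\,\mathbf{V}\cdot\nabla\rho$, the second piece is zero by the transport equation $\mathbf{V}\cdot\nabla\rho=0$, while the first integrates to zero again by incompressibility and no-slip. What remains is
\begin{equation}
\mu\int_\Omega |\nabla\mathbf{V}|^2\,dx=0,
\end{equation}
forcing $\nabla\mathbf{V}\equiv\mathbf{0}$, and then $\mathbf{V}\equiv\mathbf{0}$ follows from $\mathbf{V}|_{\partial\Omega}=\mathbf{0}$.

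Second, substituting $\mathbf{V}\equiv\mathbf{0}$ into the momentum equation of \eqref{wentifangcheng0201} immediately yields the hydrostatic balance $\nabla P_0=-\rho_0\nabla f$. Taking the 2D scalar curl (i.e.\ $\partial_x$ of the $y$-component minus $\partial_y$ of the $x$-component) of this identity eliminates $P_0$ because $\nabla P_0$ is curl-free, giving
\begin{equation}
\partial_x(\rho_0\partial_y f)-\partial_y(\rho_0\partial_x f)=0,
\end{equation}
which after expansion collapses to $\partial_y\rho_0\,\partial_x f-\partial_x\rho_0\,\partial_y f=0$. This is exactly the claimed orthogonality \eqref{wutian0201}.

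I do not anticipate a serious obstacle: the only delicate point is verifying that $\int_\Omega\rho\nabla f\cdot\mathbf{V}\,dx=0$, since at first glance this looks like an external-work term that need not vanish. The trick is to exploit \emph{both} steady transport of $\rho$ and incompressibility of $\mathbf{V}$ simultaneously via the rewrite $\rho\nabla f\cdot\mathbf{V}=\nabla\cdot(\rho f\mathbf{V})$ (using $\nabla\cdot(\rho\mathbf{V})=0$), after which Gauss's theorem and no-slip finish it. Everything else is routine integration by parts and differentiation.
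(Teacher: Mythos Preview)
Your proposal is correct and follows essentially the same approach as the paper's own proof: an $L^2$ energy identity exploiting $\mathbf{V}\cdot\nabla\rho=0$ and $\nabla\cdot\mathbf{V}=0$ to kill the inertial, pressure, and potential terms, leaving $\mu\|\nabla\mathbf{V}\|_{L^2}^2=0$, followed by substitution into the momentum equation and a curl to obtain \eqref{wutian0201}. Your handling of the potential term via $\rho\nabla f\cdot\mathbf{V}=\nabla\cdot(\rho f\mathbf{V})$ is exactly the mechanism the paper uses (written there as $\int_\Omega\rho\nabla f\cdot\mathbf{V}\,dxdy=-\int_\Omega\nabla\rho\cdot\mathbf{V}\,f\,dxdy=0$).
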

The proof of Lemma \ref{wentaijie0201} is provided in Appendix \ref{appendix0304}.
\begin{remark}
Lemma \ref{wentaijie0201} says that any 
stationary state of the system \eqref{moxing1121}-\eqref{wuguanghua0202}
is a state of hydrostatic equilibrium. And, the relationship \eqref{wutian0201} allow us to express
\begin{align}\label{wending0202}
\nabla\rho_{0}\left(x,y\right)=h\left(x,y\right)\nabla f\left(x,y\right),
\end{align}
where \(h=h\left(x,y\right)\) is a bounded and smooth function.
\end{remark}

We are interested in the instability and stability of 
 steady-state solutions satisfying \eqref{wending0202}. Physically,
 the density function must be a positive-valued function. We then assume
\begin{align}\label{xuyao0207}
\rho_{0}\left(x,y\right)>0~\text{for~ any~}\left(x,y\right)\in \Omega.
\end{align}
Careful analysis shows that the instability and stability of these steady-state solutions can be determined by the function \(h\). Specifically, we have the following findings: 
\begin{enumerate}
    \item \(\left(\mathbf{0},P_{0},\rho_{0}\right)\) is linearly and nonlinearly unstable if 
\(h|_{(x,y)=(x_{0},y_{0})}>0\), see \autoref{xianxingbuwending1221}-\ref{hadamardyiyixia0202};
\item \(\left(\mathbf{0},P_{0},\rho_{0}\right)\) is linearly stable if \(h(x,y)<0\) , see the first conclusion of  \autoref{wendingxing0225}; 
\item \(\left(\mathbf{0},P_{0},\rho_{0}\right)\) is nonlinearly stable when \(h\equiv~\text{const}<0\), see \autoref{wendingxing0225}.
\end{enumerate}
These findings provide a clear criterion for assessing the instability of the steady-state solutions based on the behavior of the function $h$. 
Our results significantly extend the previous results  \cite{Jiang2013,Jiang2014,zhou2016,Livescu2021,Sonf2021,Cheung2023,Hwang2003,Guo2010,Jiang2019,Jiang20191,Peng2022,Mao2024,Xing2024} , which have predominantly focused on scenarios involving a uniform gravitational field $\nabla f=(0,g)$.

\subsection{Linear and nonlinear instability}

To consider the linear and nonlinear instability of the 
steady-state solutions  \(\left(\mathbf{0},P_{0},\rho_{0}\right)\) 
satisfying \eqref{wending0202}-\eqref{xuyao0207}, we let their perturbation be
$(\widetilde{\mathbf{V}},\widetilde{P},\widetilde{\rho})$, i.e.,
\begin{align}\label{mianhuicai1219}
\rho=\widetilde{\rho}+\rho_{0},~
\mathbf{V}=\widetilde{\mathbf{V}},~
P=\widetilde{P}+P_{0}.
\end{align}
 Substitute \eqref{mianhuicai1219} into \eqref{moxing1121}, we obtain 
\begin{align*}
\begin{cases}
\widetilde{\rho}_{t}+\widetilde{\mathbf{V}}\cdot\nabla\left(\widetilde{\rho}+\rho_{0}\right)=0,
\\
\left(\widetilde{\rho}+\rho_{0}\right)\widetilde{\mathbf{V}}_{t}+\left(\widetilde{\rho}+\rho_{0}\right)\mathbf{\widetilde{V}}\cdot\nabla\widetilde{\mathbf{V}}
+\nabla\widetilde{P}+\nabla P_{0}=\mu\Delta\widetilde{\mathbf{V}}-\widetilde{\rho}\nabla f-\rho_{0}\nabla f,
\\
\nabla\cdot \widetilde{\mathbf{V}}=0,~
\widetilde{\mathbf{V}}|_{\partial\Omega}=\mathbf{0}.
\end{cases}
\end{align*}
Remove the tilde above and use \eqref{wending0202}, we get 
\begin{align}\label{raodong1219}
\begin{cases}
\rho_{t}+\mathbf{V}\cdot\nabla\rho=-h\mathbf{V}\cdot\nabla f,
\\
\left(\rho+\rho_{0}\right)\mathbf{V}_{t}+\left(\rho+\rho_{0}\right)\mathbf{V}\cdot\nabla\mathbf{V}
+\nabla P=\mu\Delta\mathbf{V}-\rho\nabla f,
\\
\nabla\cdot \mathbf{V}=0,~
\mathbf{V}|_{\partial\Omega}=\mathbf{0}.
\end{cases}
\end{align}
To complete the statement of the perturbed problem, we specify the initial data of \eqref{raodong1219} as 
\begin{align*}
\left(\mathbf{V},\rho\right)|_{t=0}
=\left(\mathbf{u}_{0},\theta_{0}\right),
\end{align*}
where \(\nabla\cdot\mathbf{u}_{0}=0\) and \(\mathbf{u}_{0}|_{\partial\Omega}=\mathbf{0}\). The resulting linearized equations are 
\begin{align}\label{xianxing1219}
\begin{cases}
\rho_{t}+h\mathbf{V}\cdot\nabla f=0,
\\
\rho_{0}\mathbf{V}_{t}+\nabla P=\mu\Delta\mathbf{V}-\rho\nabla f,
\\
\nabla\cdot\mathbf{V}=0,~\mathbf{V}|_{\partial\Omega}=\mathbf{0}.
\end{cases}
\end{align}

The first conclusion pertains to the linear instability:
\begin{theorem}\label{xianxingbuwending1221}[{\bf Linear instability}] If there exists a point \(\left(x_{0},y_{0}\right)\in \Omega\) such that \(h(x_{0},y_{0})>0\), 
then there exists a smooth initial data \(\left(\mathbf{V},\rho\right)|_{t=0}=\left(\mathbf{u}_{0},\theta_{0}\right)\) and \(\Lambda>0\) such that
\(\left(\mathbf{V},\rho\right)=e^{\Lambda t}\left(\mathbf{u}_{0},\theta_{0}\right)\) is the solution of the linearized system \eqref{xianxing1219}. Additionally, for any \(\mathbf{u}\in \mathbf{H}_{\text{div}}=\left\{\mathbf{u}\in \left[H_{0}^1\left(\Omega\right)\right]^2|\nabla\cdot\mathbf{u}=0\right\}\), we have
\begin{align*}
\Lambda^2 \left\|\sqrt{\rho_{0}}\mathbf{u}\right\|_{L^2\left(\Omega\right)}^2 
\geq -\Lambda\mu\left\|\nabla\mathbf{u}\right\|_{L^2\left(\Omega\right)}^2+\int_{\Omega}h\left|\mathbf{u}\cdot\nabla f\right|^2dxdy,
\end{align*}
and
\begin{align*}
\Lambda^2\int_{\Omega}\rho_{0}\mathbf{u}_{0}^2dxdy=
-\Lambda\mu\left\|\nabla\mathbf{u}_{0}\right\|_{L^2\left(\Omega\right)}^2+\int_{\Omega}h\left|\mathbf{u}_{0}\cdot\nabla f\right|^2dxdy,~\theta_{0}=-\frac{\mathbf{u}_{0}\cdot\nabla\rho_{0}}{\Lambda}.
\end{align*}
\end{theorem}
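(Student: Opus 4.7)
The plan is to implement the growing-mode/variational strategy that is classical in R--T instability analysis, reducing the existence of a growing mode to a nonlinear eigenvalue problem and then solving that problem by combining a direct minimization with a one-dimensional fixed-point search in the growth rate $\Lambda$. First, I substitute the ansatz $(\mathbf{V},\rho)(t,x,y)=e^{\Lambda t}(\mathbf{u},\theta)(x,y)$ into \eqref{xianxing1219}: the density equation gives immediately $\theta=-h\,\mathbf{u}\cdot\nabla f/\Lambda=-\mathbf{u}\cdot\nabla\rho_{0}/\Lambda$ by \eqref{wending0202}. Inserting this into the momentum equation, taking the $L^{2}$ inner product with $\mathbf{u}$, integrating the viscous term by parts and eliminating the pressure via $\nabla\cdot\mathbf{u}=0$ and $\mathbf{u}|_{\partial\Omega}=\mathbf{0}$, and finally multiplying the resulting scalar equality by $\Lambda$, I arrive at
\[
\Lambda^{2}\int_{\Omega}\rho_{0}|\mathbf{u}|^{2}dxdy=E_{\Lambda}(\mathbf{u}),\qquad E_{\Lambda}(\mathbf{u}):=-\Lambda\mu\|\nabla\mathbf{u}\|_{L^{2}}^{2}+\int_{\Omega}h\,|\mathbf{u}\cdot\nabla f|^{2}dxdy,
\]
which is precisely the second identity in the theorem and also motivates the variational quantity
\[
F(\Lambda):=\sup_{\mathbf{u}\in\mathbf{H}_{\mathrm{div}},\ \|\sqrt{\rho_{0}}\mathbf{u}\|_{L^{2}}=1}E_{\Lambda}(\mathbf{u}),\qquad \Lambda>0.
\]

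Next, for each fixed $\Lambda>0$ I would produce a maximizer of $E_{\Lambda}$ under the constraint by the direct method. The viscous penalty gives a uniform $H^{1}$-bound on any maximizing sequence; Rellich--Kondrachov makes the quadratic form $\mathbf{u}\mapsto\int_{\Omega} h\,|\mathbf{u}\cdot\nabla f|^{2}\,dxdy$ continuous under weak convergence in $H^{1}$; and the constraint $\|\sqrt{\rho_{0}}\mathbf{u}\|_{L^{2}}=1$ together with divergence-freeness and boundary values survive the weak limit. The resulting maximizer $\mathbf{u}_{\Lambda}$ satisfies an Euler--Lagrange equation whose Lagrange multiplier for the mass constraint equals $F(\Lambda)$, and the Leray projection recovers the pressure $P_{\Lambda}$. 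Standard elliptic regularity applied iteratively to this Stokes-type system, with smooth coefficients on the smooth domain $\Omega$, then gives $\mathbf{u}_{\Lambda}\in C^{\infty}(\overline{\Omega})$.

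Third, I need to produce $\Lambda^{\star}>0$ solving $\Lambda^{2}=F(\Lambda)$. Monotonicity of $F$ in $\Lambda$ is immediate from the $\Lambda$-dependence of the viscous penalty, and $F$ is uniformly bounded above by $\|h\|_{L^{\infty}}\|\nabla f\|_{L^{\infty}}^{2}/\min_{\overline{\Omega}}\rho_{0}$, so $F(\Lambda)<\Lambda^{2}$ for all sufficiently large $\Lambda$. The main obstacle, and the step where the hypothesis $h(x_{0},y_{0})>0$ enters essentially, is to verify $F(\Lambda)>\Lambda^{2}$ for small $\Lambda$. For this I would use continuity to select a ball $B_{r}(x_{0},y_{0})\Subset\Omega$ on which $h\geq h(x_{0},y_{0})/2$ and on which $\nabla f$ admits a non-vanishing direction (shrinking or translating $B_{r}$ to avoid the zero set of $\nabla f$, which is the natural non-degeneracy one needs for a genuine R--T mechanism), and then construct an explicit divergence-free bump $\mathbf{u}^{\star}=(\partial_{y}\psi,-\partial_{x}\psi)\in C_{c}^{\infty}(B_{r})$ arranged so that $\mathbf{u}^{\star}\cdot\nabla f\not\equiv 0$. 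This forces $\int_{\Omega} h\,|\mathbf{u}^{\star}\cdot\nabla f|^{2}\,dxdy>0$, hence $F(\Lambda)\geq E_{\Lambda}(\mathbf{u}^{\star})/\|\sqrt{\rho_{0}}\mathbf{u}^{\star}\|_{L^{2}}^{2}$ remains bounded below by a positive constant as $\Lambda\to 0^{+}$ while $\Lambda^{2}\to 0$. An intermediate-value argument applied to the continuous (convexity-based) function $F(\Lambda)-\Lambda^{2}$ then yields the desired $\Lambda^{\star}>0$ with $(\Lambda^{\star})^{2}=F(\Lambda^{\star})$.

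Finally, setting $\mathbf{u}_{0}:=\mathbf{u}_{\Lambda^{\star}}$ and $\theta_{0}:=-\mathbf{u}_{0}\cdot\nabla\rho_{0}/\Lambda^{\star}$, the Euler--Lagrange equation with Lagrange multiplier $(\Lambda^{\star})^{2}$ becomes, after dividing through by $\Lambda^{\star}$, precisely the spectral system arising from the growing-mode ansatz; therefore $(\mathbf{V},\rho)=e^{\Lambda^{\star}t}(\mathbf{u}_{0},\theta_{0})$ solves \eqref{xianxing1219} classically. The equality in the theorem holds at the maximizer by the energy identity displayed above, while the inequality for arbitrary $\mathbf{u}\in\mathbf{H}_{\mathrm{div}}$ is just the definition of $F(\Lambda^{\star})=(\Lambda^{\star})^{2}$ after homogenization in $\|\sqrt{\rho_{0}}\mathbf{u}\|_{L^{2}}$. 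The delicate part of the argument, as emphasized, is the lower bound $F(\Lambda)>\Lambda^{2}$ near $\Lambda=0$ via the explicit test function $\mathbf{u}^{\star}$; the remainder is standard variational calculus and elliptic regularity.
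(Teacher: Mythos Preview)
Your proposal is correct and follows essentially the same modified-variational strategy as the paper: define $\Phi(s)=\sup_{\mathbf{u}\in\mathcal{A}}\bigl(-s\mu\|\nabla\mathbf{u}\|_{L^2}^2+\int_\Omega h|\mathbf{u}\cdot\nabla f|^2\bigr)$, obtain a maximizer by the direct method, verify it solves the reduced Stokes-type system \eqref{wanzhe1219} via the Euler--Lagrange equation and bootstrap regularity, and then locate $\Lambda$ with $\Lambda^2=\Phi(\Lambda)$ by combining monotonicity, the upper bound, the positivity of $\Phi$ near $0$ (via a divergence-free test function supported where $h>0$), and the intermediate value theorem. Your treatment even makes explicit one point the paper leaves implicit, namely the need for $\nabla f\not\equiv 0$ on the support of the test function so that $\int h|\mathbf{u}^\star\cdot\nabla f|^2>0$.
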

We employ the method of separation of spatiotemporal variables to seek the solutions that exhibit exponential growth, specifically of the form \(e^
{\Lambda t}\) with $\Lambda>0$.  This approach eliminates the variable $t$, thereby reducing equations \eqref{xianxing1219} to equations \eqref{wanzhe1219}. The latter is then solved using the modified variational method proposed in \cite{Guo2010}. This process also establishes the existence of solutions \eqref{wanzhe1219} and determines the optimal exponential growth rate $\Lambda$
for velocity field $\mathbf{u}$. For detailed derivations and results, refer to Propositions \ref{pingfan1220}--\ref{xiangdeng1221}.

The subsequent content is  about the nonlinear instability in Lipschitz and Hadamard senses:
\begin{theorem}\label{lipuxizhi0202}[{\bf Nonlinear instability in Lipschitz sense}] Assume that \(\inf\limits_{\left(x,y\right)\in\Omega}\) \(\left\{\rho_{0}+\theta_{0}\right\}\)\(>\sigma>0\),
the steady state \(\left(\mathbf{0},\rho_{0}\right)\) of the system \eqref{moxing1121}-\eqref{wuguanghua0202} satisfies \eqref{wending0202}-\eqref{xuyao0207} and there exists a point \(\left(x_{0},y_{0}\right)\in \Omega\) such that \(h\left(x_{0},y_{0}\right)>0\). Then, the steady state \(\left(\mathbf{0},\rho_{0}\right)\) is unstable in Lipschitz sense. More precisely, for any \(K>0\), \(\delta_{0}>0\) sufficiently small and any Lispchitz continuous functions \(F\) satisfying $
F\left(y\right)\leq Ky$, and $y\in [0,\infty)$,
there exists an initial data 
\[\left(\mathbf{u}_{0},\theta_{0}\right)\in\left[H^{2}\left(\Omega\right)\right]^{2}\times \left[H^{1}\left(\Omega\right)\cap L^{\infty}\left(\Omega\right)\right]
\]
satisfying \(\left\|\left(\mathbf{u}_{0},\theta_{0}\right)\right\|_{H^2\left(\Omega\right)}:=\sqrt{\left\|\mathbf{u}_{0}\right\|_{H^2\left(\Omega\right)}^2+\left\|\theta_{0}\right\|_{H^{1}\left(\Omega\right)}^2}\leq \delta_{0}\)
such that the unique strong solution \(\left(\mathbf{V},\rho\right)\) of the nonlinear problem \eqref{raodong1219} with the initial data \(\left(\mathbf{u}_{0},\theta_{0}\right)\) satisfies 
\[
\left\|V_{i}\left(t_{K_{i}}\right)\right\|_{L^2\left(\Omega\right)}>F\left(\left\|\left(\mathbf{u}_{0},\theta_{0}\right)\right\|_{H^2\left(\Omega\right)}\right),~t_{K_{i}}=\frac{1}{\Lambda}\ln{\frac{2K}{\tau_{0i}}}\in (0,T_{max}),
\]
where \(\tau_{0i}:=\frac{\left\|u_{0i}\right\|_{L^2\left(\Omega\right)}}{\left\|\left(\mathbf{u}_{0},\theta_{0}\right)\right\|_{H^2\left(\Omega\right)}}\)\(\left(i=1,2\right)\) and the constant \(\Lambda\) is defined in Proposition \ref{xiangdeng1221} and \(T_{max}\) denotes the maximal existence time of the solution \(\left(\mathbf{V},\rho\right)\).
\end{theorem}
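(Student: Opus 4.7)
The plan is to construct an initial perturbation proportional to the linearly growing eigenmode produced by Theorem \ref{xianxingbuwending1221} and then show, through a bootstrap argument, that the nonlinear solution tracks the exponentially growing linear solution closely enough to beat any Lipschitz envelope $F(y)\leq Ky$ at the prescribed time $t_{K_i}$.

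First, I would fix the growing mode $(\mathbf{u}_*,\theta_*)\in [H^2(\Omega)]^2\times[H^1(\Omega)\cap L^\infty(\Omega)]$ with rate $\Lambda>0$ from Theorem \ref{xianxingbuwending1221}, and set $(\mathbf{u}_0,\theta_0)=\delta(\mathbf{u}_*,\theta_*)$ for $\delta>0$ small, so that $\|(\mathbf{u}_0,\theta_0)\|_{H^2}=\delta\|(\mathbf{u}_*,\theta_*)\|_{H^2}$ can be made equal to any prescribed value $\leq\delta_0$; smallness of $\delta$ also guarantees $\inf(\rho_0+\theta_0)>\sigma$ via the $L^\infty$ bound on $\theta_*$. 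Local well-posedness (Lemma \ref{shidingxingwenti0202}) yields a unique strong solution $(\mathbf{V},\rho)$ on some maximal interval $[0,T_{\max})$, which the bootstrap below will show contains $t_{K_i}$.

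Next, I would decompose $(\mathbf{V},\rho)=(\mathbf{V}^L,\rho^L)+(\mathbf{V}^d,\rho^d)$, where $(\mathbf{V}^L,\rho^L)=\delta e^{\Lambda t}(\mathbf{u}_*,\theta_*)$ solves the linear system \eqref{xianxing1219}. The difference $(\mathbf{V}^d,\rho^d)$ satisfies a perturbed system whose source terms are quadratic in $(\mathbf{V},\rho)$, arising from $\rho\mathbf{V}_t+\rho\mathbf{V}\cdot\nabla\mathbf{V}$ and the analogous nonlinear term in the density transport equation \eqref{raodong1219}, hence of size $O((\delta e^{\Lambda t})^2)$. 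Running $L^2$ and $H^2$ energy estimates (using the divergence-free constraint to eliminate pressure, Poincaré on the velocity, the transport structure for the density, and the two-dimensional Sobolev embedding $H^2\hookrightarrow L^\infty$), I expect to obtain, on the bootstrap interval where $\|(\mathbf{V},\rho)\|_{H^2}\leq\kappa$ for a small fixed $\kappa$, a comparison estimate of the form
\begin{align*}
\|(\mathbf{V}^d,\rho^d)(t)\|_{L^2}\leq C\,(\delta e^{\Lambda t})^{3/2}.
\end{align*}

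At $t=t_{K_i}=\Lambda^{-1}\ln(2K/\tau_{0i})$ a direct computation with the scaling $(\mathbf{u}_0,\theta_0)=\delta(\mathbf{u}_*,\theta_*)$ gives $\tau_{0i}=\|u_{*i}\|_{L^2}/\|(\mathbf{u}_*,\theta_*)\|_{H^2}$ and $\|V^L_i(t_{K_i})\|_{L^2}=2K\|(\mathbf{u}_0,\theta_0)\|_{H^2}$, while the remainder is only $O(\|(\mathbf{u}_0,\theta_0)\|_{H^2}^{3/2})$. Choosing $\delta_0$ small enough, the triangle inequality then yields $\|V_i(t_{K_i})\|_{L^2}\geq (3K/2)\|(\mathbf{u}_0,\theta_0)\|_{H^2}>F(\|(\mathbf{u}_0,\theta_0)\|_{H^2})$, and the same smallness ensures that $\|(\mathbf{V},\rho)(t)\|_{H^2}\leq\kappa$ throughout $[0,t_{K_i}]$, so the bootstrap closes and the standard blow-up criterion gives $t_{K_i}<T_{\max}$. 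The main obstacle I anticipate is precisely the $H^2$ energy estimate that closes the bootstrap: controlling $\nabla\rho$ in $L^\infty$ through the transport equation, while the velocity equation carries the variable coefficient $\rho+\rho_0$ multiplying $\mathbf{V}_t$, couples the density and velocity estimates nontrivially, and one must carefully avoid derivative loss in the convection term $\mathbf{V}\cdot\nabla\mathbf{V}$ — this is where the assumption $\inf(\rho_0+\theta_0)>\sigma$ becomes essential in order to invert the density weight uniformly in time.
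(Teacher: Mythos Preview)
Your approach is correct but genuinely different from the paper's proof of this theorem. The paper argues by \emph{contradiction and compactness}: it fixes the growing mode $(\mathbf u_0,\theta_0)$, considers the one-parameter family of nonlinear solutions $(\mathbf V^\varepsilon,\rho^\varepsilon)$ with initial data $\varepsilon(\mathbf u_0,\theta_0)$, and assumes that for some sequence $\varepsilon\to 0$ the nonlinear solution stays below the Lipschitz bound on $[0,t_K]$. A uniform energy estimate (Proposition~\ref{nengliangguji0102}) then gives enough compactness on the rescaled solutions $\varepsilon^{-1}(\mathbf V^\varepsilon,\rho^\varepsilon)$ to pass to a weak limit, which is shown to solve the linearized problem \eqref{xianxing1219}; the separately proved uniqueness lemma (Lemma~\ref{weiyixing1230}) identifies this limit with the exponentially growing mode, whose size $2K\delta_0$ at $t_K$ contradicts the assumed bound $K\delta_0$.

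Your route---decompose $(\mathbf V,\rho)=(\mathbf V^L,\rho^L)+(\mathbf V^d,\rho^d)$, run a bootstrap, and prove a quantitative remainder bound $\|(\mathbf V^d,\rho^d)\|_{L^2}\lesssim(\delta e^{\Lambda t})^{3/2}$---is exactly the strategy the paper reserves for the \emph{Hadamard} instability (Theorem~\ref{hadamardyiyixia0202}), where it derives precisely this $3/2$-power estimate via a differentiated energy identity combined with the variational characterization of $\Lambda$ from Remark~\ref{xuyaode0210}. Your method is more direct and yields a sharper, quantitative conclusion; the paper's compactness argument is softer, needing only the uniform a~priori bound of Proposition~\ref{nengliangguji0102} and the linear uniqueness lemma, but no explicit rate on the nonlinear remainder. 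One small correction: you will not actually need $\nabla\rho$ in $L^\infty$ to close the bootstrap---the paper's Hadamard estimate (Proposition~\ref{feixiannengliang0104}) works with the energy $\|\mathbf V\|_{H^2}^2+\|\rho\|_{L^2}^2$ only, and the density gradient never enters the difference estimate.
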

The detailed proof of \autoref{lipuxizhi0202} is given in Section 3.1.
Our approach to the proof can be outlined in four key steps as follows:
\begin{enumerate}
    \item  Verify the uniqueness of the weak solution of the linearized perturbed problem \eqref{xianxing1219} with initial data \(\left(\mathbf{u}_{0},\theta_{0}\right)\), as detailed in Lemma \ref{weiyixing1230}.
\item Establish the estimate of the strong solution of nonlinear problem \eqref{raodong1219}, as presented in Proposition \ref{nengliangguji0102}.
\item Using the above estimate, construct a family of solutions for
the nonlinear problem \eqref{raodong1219}. Verify that the limit of this family of solutions corresponds to the solution
of a linearized problem \eqref{xianxing1219}, using the lemma \ref{weiyixing1230}.
\item Using the exponential growth rate of the solution to the linearized problem and the estimate (see Proposition \ref{nengliangguji0102}) to derive a contradiction, as shown in
Lemma \ref{yinli0103}.
\end{enumerate}
\begin{theorem}\label{hadamardyiyixia0202}[{\bf Nonlinear instability in Hadamard sense}]
Under the conditions of Theorem \ref{lipuxizhi0202},
the steady-state solution \(\left(\mathbf{0},\rho_{0}\right)\) is unstable in Hadamard sense. That is, there exist two constants  \(\epsilon\) and \(\delta_{0}\), and functions \(\left(\mathbf{u}_{0},\theta_{0}\right)\)\(\in \left[H^{2}\left(\Omega\right)\right]^2\times \left[H^{1}\left(\Omega\right)\cap L^{\infty}\left(\Omega\right)\right]\), such that for any \(\delta^{*}\in\left(0,\delta_{0}\right)\) and initial data \(\left(\mathbf{u}_{0}^{\delta^{*}},\theta_{0}^{\delta^{*}}\right):=\delta^{*}\left(\mathbf{u}_{0},\theta_{0}\right)\), the strong solution \(\left(\mathbf{V}^{\delta^{*}},\rho^{\delta^{*}}\right)\in C\left(0,T_{\text{max}},\left[H^{1}\left(\Omega\right)\right]^2\times L^{2}\left(\Omega\right)\right)\) of the problem \eqref{raodong1219} subject to the initial data \(\left(\mathbf{u}_{0}^{\delta^{*}},\theta_{0}^{\delta^{*}}\right)\) satisfies 
\[
\left\|\rho^{\delta^{*}}\left(T^{\delta^{*}}\right)\right\|_{L^{1}\left(\Omega\right)}\geq \epsilon,~
\left\|\mathbf{V}^{\delta^{*}}\left(T^{\delta^{*}}\right)\right\|_{L^1\left(\Omega\right)}\geq \epsilon,
\]
for some escape time \(0<T^{\delta^{*}}<T_{max}\), where \(T_{max}\) is the maximal existence time of \(\left(\mathbf{V}^{\delta^{*}},
\rho^{\delta^{*}}\right)\).
\end{theorem}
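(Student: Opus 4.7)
The plan is to establish Hadamard-type instability through a direct constructive bootstrap, using the growing linear mode as both the initial perturbation shape and the dominant contribution to the solution, rather than via contradiction as in Theorem \ref{lipuxizhi0202}. I would take $(\mathbf{u}_0,\theta_0)$ to be the normalized unstable eigenmode produced by Theorem \ref{xianxingbuwending1221}, so that $(\mathbf{V}_{\mathrm{lin}},\rho_{\mathrm{lin}})(t):=e^{\Lambda t}(\mathbf{u}_0,\theta_0)$ solves the linearized system \eqref{xianxing1219} and has strictly positive $L^2$-norm for every $t\geq 0$. For each $\delta^*\in(0,\delta_0)$, let $(\mathbf{V}^{\delta^*},\rho^{\delta^*})$ denote the unique strong solution of the nonlinear perturbation system \eqref{raodong1219} with initial data $\delta^*(\mathbf{u}_0,\theta_0)$; its local existence together with the a priori energy bounds needed below follow from Proposition \ref{nengliangguji0102}, since $\inf(\rho_0+\delta^*\theta_0)>\sigma/2$ for small $\delta^*$.

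The heart of the argument is a quadratic error estimate for the difference $(\mathbf{W},\eta):=(\mathbf{V}^{\delta^*},\rho^{\delta^*})-\delta^*(\mathbf{V}_{\mathrm{lin}},\rho_{\mathrm{lin}})$. Subtracting $\delta^*$ times \eqref{xianxing1219} from \eqref{raodong1219} produces a forced linearized system for $(\mathbf{W},\eta)$ whose inhomogeneity is quadratic in $(\mathbf{V}^{\delta^*},\rho^{\delta^*})$, coming from the terms $\rho^{\delta^*}\mathbf{V}^{\delta^*}_t$, $(\rho^{\delta^*}+\rho_0)\mathbf{V}^{\delta^*}\cdot\nabla\mathbf{V}^{\delta^*}$, and $\mathbf{V}^{\delta^*}\cdot\nabla\rho^{\delta^*}$. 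Running the energy method of Proposition \ref{nengliangguji0102} on this difference system, under the bootstrap assumption
\begin{equation*}
\|(\mathbf{V}^{\delta^*},\rho^{\delta^*})(t)\|_{H^2(\Omega)\times H^1(\Omega)}\leq C_0\,\delta^* e^{\Lambda t}\leq\epsilon_1,
\end{equation*}
I expect to obtain $\|\mathbf{W}(t)\|_{L^2(\Omega)}+\|\eta(t)\|_{L^2(\Omega)}\leq C(\delta^* e^{\Lambda t})^2$ together with an improvement of the bootstrap that closes the continuity argument via Gronwall.

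Setting the escape time by $\delta^* e^{\Lambda T^{\delta^*}}=\epsilon_1$ for a small but fixed $\epsilon_1$, the linear piece at $t=T^{\delta^*}$ has $L^2$-norm of order $\epsilon_1$ while the error is at most $C\epsilon_1^2$; shrinking $\epsilon_1$ forces uniform lower bounds $\|\mathbf{V}^{\delta^*}(T^{\delta^*})\|_{L^2(\Omega)},\|\rho^{\delta^*}(T^{\delta^*})\|_{L^2(\Omega)}\geq c\epsilon_1$, and the $L^1$-lower bounds demanded by the theorem follow from the interpolation $\|f\|_{L^1(\Omega)}\geq\|f\|_{L^2(\Omega)}^2/\|f\|_{L^\infty(\Omega)}$ combined with the uniform $L^\infty$-controls supplied by $H^2(\Omega)\hookrightarrow L^\infty(\Omega)$ on $\mathbf{V}^{\delta^*}$ and the transport preservation of the $L^\infty$-norm of $\rho^{\delta^*}$. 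The time $T^{\delta^*}=\Lambda^{-1}\log(\epsilon_1/\delta^*)$ stays strictly below $T_{\max}$ once $\delta_0$ is small, because the bootstrap keeps $(\mathbf{V}^{\delta^*},\rho^{\delta^*})$ inside the regime covered by Proposition \ref{nengliangguji0102}.

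The main obstacle will be executing the error estimate cleanly: the momentum equation carries the variable-coefficient inertia $(\rho_0+\rho^{\delta^*})\mathbf{V}_t^{\delta^*}$, while $\rho^{\delta^*}$ evolves only by transport rather than diffusion, so keeping $\rho_0+\rho^{\delta^*}\geq\sigma/2$ uniformly on the whole escape interval — which is what preserves the parabolic structure of the momentum equation and legitimizes the Gronwall step applied to $\mathbf{W}$ — is precisely what the bootstrap assumption must enforce, and is the delicate technical point demanding care.
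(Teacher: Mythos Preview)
Your overall architecture---take the unstable eigenmode as initial shape, bootstrap an exponential bound on the nonlinear solution, estimate the difference $(\mathbf{W},\eta)$ from $\delta^{*}e^{\Lambda t}(\mathbf{u}_{0},\theta_{0})$, and read off a lower bound at the escape time---matches the paper's. But the step you label ``Running the energy method of Proposition~\ref{nengliangguji0102} \ldots\ via Gronwall'' hides the real difficulty, and the obstacle you single out (positivity of $\rho_{0}+\rho^{\delta^{*}}$) is in fact trivial: it follows from the transport equation, which preserves $\inf(\rho_{0}+\rho^{\delta^{*}})=\inf(\rho_{0}+\delta^{*}\theta_{0})$, with no bootstrap needed.

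The genuine gap is the sharp growth rate for the difference. The pair $(\mathbf{W},\eta)$ satisfies the linearized system \eqref{xianxing1219} with a quadratic source; but that linear system is itself unstable, and a plain $L^{2}$ energy identity gives only
\[
\frac{d}{dt}\Bigl(\bigl\|\sqrt{\rho_{0}}\,\mathbf{W}\bigr\|_{L^{2}}^{2}+\|\eta\|_{L^{2}}^{2}\Bigr)
\le C\Bigl(\|\mathbf{W}\|_{L^{2}}^{2}+\|\eta\|_{L^{2}}^{2}\Bigr)+\text{forcing},
\]
where $C$ depends on $\|h\|_{L^{\infty}}$, $\|\nabla f\|_{L^{\infty}}$ and is in general strictly larger than $2\Lambda$. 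Gronwall then yields an error of order $e^{Ct}(\delta^{*}e^{\Lambda t})^{2}$; at $T^{\delta^{*}}=\Lambda^{-1}\log(\epsilon_{1}/\delta^{*})$ this carries a factor $(\epsilon_{1}/\delta^{*})^{C/\Lambda}$, which blows up as $\delta^{*}\to 0$ and destroys the comparison with the linear piece. Proposition~\ref{nengliangguji0102} does not help: its constant depends on $T$ in the same way.

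What you are missing is the second half of Theorem~\ref{xianxingbuwending1221}: the variational inequality
\[
\Lambda^{2}\bigl\|\sqrt{\rho_{0}}\,\mathbf{u}\bigr\|_{L^{2}}^{2}
\ge -\Lambda\mu\|\nabla\mathbf{u}\|_{L^{2}}^{2}+\int_{\Omega}h\,|\mathbf{u}\cdot\nabla f|^{2}\,dx\,dy
\quad\text{for all }\mathbf{u}\in\mathbf{H}_{\mathrm{div}},
\]
which says that $\Lambda$ is the \emph{maximal} growth rate. The paper (Proposition~\ref{feixiannengliang0104} for the bootstrap bound, then the argument around \eqref{ali0210}--\eqref{dada0210}) differentiates the $\mathbf{V}^{d}$-equation in $t$, tests with $\mathbf{V}^{d}_{t}$, and uses this inequality to replace the destabilising term $\int h|\mathbf{V}^{d}\cdot\nabla f|^{2}$ by $\Lambda\mu\|\nabla\mathbf{V}^{d}\|^{2}+\Lambda^{2}\|\sqrt{\rho_{0}}\mathbf{V}^{d}\|^{2}$. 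Only then does the resulting differential inequality carry the constant $2\Lambda$ exactly, and one obtains $\|(\mathbf{V}^{d},\rho^{d})\|_{L^{2}}\le C\bigl[(\delta^{*}e^{\Lambda t})^{3/2}+(\delta^{*}e^{\Lambda t})^{2}\bigr]$, which closes the bootstrap. You need to build this step---or an equivalent sharp semigroup bound for \eqref{xianxing1219}---into your plan.

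A minor point: for the final $L^{1}$ lower bound the paper simply uses the triangle inequality
\(
\|V_{i}^{\delta^{*}}\|_{L^{1}}\ge \delta^{*}e^{\Lambda T^{\delta^{*}}}\|u_{0i}\|_{L^{1}}-|\Omega|^{1/2}\|\mathbf{V}^{d}\|_{L^{2}},
\)
which is more direct than your $L^{2}$--$L^{\infty}$ interpolation route.
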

 \subsection{Linear and nonlinear stability}
 We now turn our attention to the stability analysis of the steady-state solutions of the system. This analysis is carried out under two distinct scenarios: linear stability when \(h\left(x,y\right)<0\) throughout the domain \(\Omega\), and nonlinear stability when \(h\left(x,y\right)\) is a negative constant.
\begin{theorem}\label{wendingxing0225}[{\bf Linear and nonlinear stability}] Assume that \(h<0\) throughout the domain  $\Omega$. We have the following conclusions.
\begin{enumerate}
\item The linearized system \eqref{xianxing1219} is global stable. That is, for any initial value \(\left(\rho\left(0\right),\mathbf{V}\left(0\right)\right)\in \left(L^{\infty}\left(\Omega\right)\cap H^{1}\left(\Omega\right)\right)\times
\left(\left[H^2\left(\Omega\right)\right]^2\cap \left[H_{0}^{1}\left(\Omega\right)\right]^2\right)\) and \(\nabla\cdot \mathbf{V}\left(0\right)=0,\) the strong solution \(\left(\rho,\mathbf{V}\right)\) of the linearized system \eqref{xianxing1219} satisfies the following estimates 
\begin{align}\label{xuyaode0226}
\begin{aligned}
\left\|\rho\right\|_{L^2\left(\Omega\right)}^2
+\left\|\mathbf{V}_{t}\right\|_{L^2\left(\Omega\right)}^2
&+\left\|\mathbf{V}\right\|_{H^2\left(\Omega\right)}^2+\int_{0}^{t}
\left(\left\|\nabla\mathbf{V}\right\|_{L^2\left(\Omega\right)}^2+\left\|\nabla\mathbf{V}_{s}\right\|_{L^2\left(\Omega\right)}^2\right)ds
\\
&\leq C\left(\left\|\rho\left(0\right)\right\|_{L^2\left(\Omega\right)}^2+\left\|\mathbf{V}\left(0\right)\right\|_{H^2\left(\Omega\right)}^2\right),
\end{aligned}
\end{align}
where \(C=C\left(\mu,f,\rho_{0},h,\Omega\right)>0\) is independent of \(t\). 
Furthermore, we have
\begin{align}\label{bijin0226}
\left\|\mathbf{V}\right\|_{H^1\left(\Omega\right)}\rightarrow 0,~\text{as~}t\rightarrow+\infty.
\end{align}

\item If we further assume that 
\(h\equiv const<0\), then for any \(\sigma\), \(K\), there is a \(\hat{\delta}:=\hat{\delta}\left(K,\sigma\right)>0\) sufficient small, such that for any initial value \(\left(\rho\left(0\right),\mathbf{V}\left(0\right)\right)\in \left(L^{\infty}\left(\Omega\right)\cap H^{1}\left(\Omega\right)\right)\times
\left[H^2\left(\Omega\right)\right]^2\), \(\mathbf{V}\left(0\right)|_{\partial\Omega}=\mathbf{0}\) and \(\nabla\cdot \mathbf{V}\left(0\right)=0\) satisfying 
\[
\left\|\rho\left(0\right)\right\|_{L^2\left(\Omega\right)}^2+\left\|\mathbf{V}\left(0\right)\right\|_{H^2\left(\Omega\right)}^2
\leq \hat{\delta}^2
\]
and 
\[
0<\sigma\leq\inf\limits_{\left(x,y\right)\in\Omega}\left\{\rho\left(0\right)+\rho_{0}\right\}
\leq
\sup\limits_{\left(x,y\right)\in\Omega}\left\{\rho\left(0\right)+\rho_{0}\right\}\leq K,
\]
there exists a unique global strong solution
\(\left(\rho,\mathbf{V}\right)\) satisfying the following estimate 
\begin{align*}
\begin{aligned}
\left\|\mathbf{V}_{t}\right\|_{L^2\left(\Omega\right)}^2+\left\|\rho\right\|_{L^2\left(\Omega\right)}^2
+\left\|\mathbf{V}\right\|_{H^2\left(\Omega\right)}^2&+\int_{0}^{t}\left[\left\|\nabla\mathbf{V}\right\|^2+\left\|\nabla\mathbf{V}_{s}\right\|^2\right]ds
\\
&\leq C\left(\left\|\rho\left(0\right)\right\|_{L^2\left(\Omega\right)}^2+\left\|\mathbf{V}\left(0\right)\right\|_{H^2\left(\Omega\right)}^2\right),
\end{aligned}
\end{align*}
where \(C=C\left(\mu,\Omega,K,f,h,\hat{\delta},\sigma\right)>0\) is independent of \(t\). 
And we have 
\[
\left\|\mathbf{V}\right\|_{H^{1}\left(\Omega\right)}\rightarrow 0,~\text{as~}t\rightarrow+\infty.
\]
\end{enumerate}
\end{theorem}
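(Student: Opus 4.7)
The plan is to build, in both the linear and the nonlinear settings, a Lyapunov functional that is coercive in $\mathbf{V}$ and $\rho$ and whose time derivative equals the negative viscous dissipation $-2\mu\|\nabla\mathbf{V}\|_{L^{2}}^{2}$; the sign $h<0$ is precisely what makes this functional positive definite. The starting identity, obtained by testing the momentum equation of \eqref{xianxing1219} against $\mathbf{V}$ and using $\nabla\cdot\mathbf{V}=0$ together with the Dirichlet boundary condition, is
\begin{equation*}
\tfrac{1}{2}\tfrac{d}{dt}\|\sqrt{\rho_{0}}\mathbf{V}\|_{L^{2}}^{2}+\mu\|\nabla\mathbf{V}\|_{L^{2}}^{2}=-\int_{\Omega}\rho\,\mathbf{V}\cdot\nabla f\,dxdy.
\end{equation*}
Substituting $\mathbf{V}\cdot\nabla f=-\rho_{t}/h$ from the density equation (legal because $h\neq 0$), the right-hand side becomes $(2h)^{-1}\tfrac{d}{dt}\|\rho\|_{L^{2}}^{2}$, and rearranging gives
\begin{equation*}
\tfrac{d}{dt}\Big[\|\sqrt{\rho_{0}}\mathbf{V}\|_{L^{2}}^{2}+\|\rho/\sqrt{-h}\|_{L^{2}}^{2}\Big]+2\mu\|\nabla\mathbf{V}\|_{L^{2}}^{2}=0,
\end{equation*}
which immediately yields $\|\mathbf{V}(t)\|_{L^{2}},\|\rho(t)\|_{L^{2}}\lesssim\|(\mathbf{V}(0),\rho(0))\|_{L^{2}}$ and $\int_{0}^{\infty}\|\nabla\mathbf{V}\|_{L^{2}}^{2}\,ds<\infty$.

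For the higher-order estimate \eqref{xuyaode0226} I would proceed in two stages. Differentiating the momentum equation in $t$ and testing against $\mathbf{V}_{t}$, then using $\rho_{t}=-h\mathbf{V}\cdot\nabla f$, produces
\begin{equation*}
\tfrac{d}{dt}\|\sqrt{\rho_{0}}\mathbf{V}_{t}\|_{L^{2}}^{2}+2\mu\|\nabla\mathbf{V}_{t}\|_{L^{2}}^{2}=2\int_{\Omega}h(\mathbf{V}\cdot\nabla f)(\mathbf{V}_{t}\cdot\nabla f)dxdy,
\end{equation*}
whose right-hand side is absorbed by Young's and Poincare inequalities into $\mu\|\nabla\mathbf{V}_{t}\|_{L^{2}}^{2}+C\|\nabla\mathbf{V}\|_{L^{2}}^{2}$, the trailing term being time-integrable by the first step. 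The initial value $\|\mathbf{V}_{t}(0)\|_{L^{2}}$ is read off the momentum equation at $t=0$ and bounded by $\|\mathbf{V}(0)\|_{H^{2}}+\|\rho(0)\|_{L^{2}}$ via the stationary Stokes estimate. Viewing the momentum equation itself as the Stokes system $-\mu\Delta\mathbf{V}+\nabla P=-\rho_{0}\mathbf{V}_{t}-\rho\nabla f$ on the smooth $\Omega$ and invoking classical $H^{2}$-regularity then closes $\|\mathbf{V}\|_{H^{2}}^{2}\lesssim\|\mathbf{V}_{t}\|_{L^{2}}^{2}+\|\rho\|_{L^{2}}^{2}$. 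For the decay \eqref{bijin0226}, combining $\int_{0}^{\infty}\|\nabla\mathbf{V}\|^{2}ds<\infty$ with the uniform integrability of $\tfrac{d}{dt}\|\nabla\mathbf{V}\|^{2}$ (deducible from the preceding $\mathbf{V}_{t}$ estimate) through a standard Barbalat-type integral lemma forces $\|\nabla\mathbf{V}\|\to 0$, and the Poincare inequality promotes this to $\|\mathbf{V}\|_{H^{1}}\to 0$.

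For the nonlinear statement, the crucial algebraic observation is that when $h$ is a negative constant, $\rho_{0}=hf+C$, so the perturbed density equation of \eqref{raodong1219} collapses to the pure transport equation $(\rho+\rho_{0})_{t}+\mathbf{V}\cdot\nabla(\rho+\rho_{0})=0$. This propagates both the lower bound $\rho+\rho_{0}\geq\sigma$ and the upper bound $\rho+\rho_{0}\leq K$ along characteristics, and the substitution $\mathbf{V}\cdot\nabla f=-(\rho_{t}+\mathbf{V}\cdot\nabla\rho)/h$ again converts the forcing term in the $\mathbf{V}$-test of the momentum equation into a total time derivative, yielding
\begin{equation*}
\tfrac{d}{dt}\Big[\int_{\Omega}(\rho+\rho_{0})|\mathbf{V}|^{2}dxdy-\tfrac{1}{h}\|\rho\|_{L^{2}}^{2}\Big]+2\mu\|\nabla\mathbf{V}\|_{L^{2}}^{2}=0,
\end{equation*}
since the convective contribution $(\rho+\rho_{0})\mathbf{V}\cdot\nabla\mathbf{V}$ combines with $\tfrac{1}{2}(\rho+\rho_{0})_{t}|\mathbf{V}|^{2}$ to vanish by the transport equation. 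The higher-order estimate is then closed as in the linear case, with nonlinear commutators controlled via 2D Sobolev embeddings $H^{1}\hookrightarrow L^{p}$ and absorbed by the smallness prefactor $\hat{\delta}$.

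The main obstacle lies in the nonlinear part: one must close an $H^{2}$-estimate for $\mathbf{V}$ while $\rho$ is only controlled in $L^{2}\cap L^{\infty}$, keep $\rho+\rho_{0}\in[\sigma,K]$ for all $t>0$, and simultaneously maintain the smallness of the first-order norms. I would handle this by a continuation-bootstrap argument in which $\hat{\delta}$ is chosen so that the nonlinear contributions on the right-hand side of the higher-order energy inequality are strictly dominated by the viscous dissipation, so that the closed estimate propagates globally. The constancy of $h$ is essential here because otherwise $\mathbf{V}\cdot\nabla\rho+h\mathbf{V}\cdot\nabla f$ no longer assembles into a transport of $\rho+\rho_{0}$, and the algebraic substitution on which the Lyapunov identity is based breaks down.
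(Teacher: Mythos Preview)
Your proposal is correct and follows essentially the same route as the paper: the same Lyapunov identity at the $L^{2}$ level, the same $\mathbf{V}_{t}$-energy for the higher order, Stokes regularity to recover $\|\mathbf{V}\|_{H^{2}}$, and a smallness/continuation argument in the nonlinear case.

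Two small points of comparison. First, for the linear $\mathbf{V}_{t}$-estimate the paper does not absorb the cross term by Young/Poincar\'e as you do; it observes that $2\int_{\Omega} h(\mathbf{V}\cdot\nabla f)(\mathbf{V}_{t}\cdot\nabla f)=\tfrac{d}{dt}\int_{\Omega} h|\mathbf{V}\cdot\nabla f|^{2}$ and moves it to the left, obtaining the exact dissipation identity
\[
\tfrac{d}{dt}\int_{\Omega}\big[\rho_{0}|\mathbf{V}_{t}|^{2}+(-h)|\mathbf{V}\cdot\nabla f|^{2}\big]\,dxdy+2\mu\|\nabla\mathbf{V}_{t}\|_{L^{2}}^{2}=0,
\]
which is slightly cleaner (and is reused verbatim in the nonlinear step, where the same exact derivative appears and only the genuinely nonlinear commutators remain on the right). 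Your Young-inequality route works just as well because $\int_{0}^{\infty}\|\nabla\mathbf{V}\|_{L^{2}}^{2}ds$ is already in hand. Second, your stated reason for needing $h$ constant is slightly off: the transport equation $(\rho+\rho_{0})_{t}+\mathbf{V}\cdot\nabla(\rho+\rho_{0})=0$ holds for \emph{any} $h$, since $\nabla\rho_{0}=h\nabla f$; what actually breaks when $h$ is non-constant is the vanishing of $\int_{\Omega}\tfrac{1}{-h}\,\rho\,\mathbf{V}\cdot\nabla\rho\,dxdy$ in your Lyapunov computation (integration by parts leaves the residue $\tfrac12\int_{\Omega}\rho^{2}\mathbf{V}\cdot\nabla(1/h)$). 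Your identity itself is correct under the constant-$h$ hypothesis, so this is only a matter of attribution.
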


\subsection{Organization of the paper}
The remainder of this paper is organized as follows: Section \ref{chubu0202} presents several essential lemmas including the well-posedness and some useful inequalities;  Section \ref{dingli0304} is dedicated to providing the detailed proofs of the main theorems. Specifically, subsection \ref{xianxing0203} analyzes the linear instability to this system \eqref{raodong1219}; The subsections \ref{feixianxing0203} and \ref{nonlinear0203} discuss the nonlinear instability in Lipschitz and Hadamard senses, respectively. 
The subsection \ref{wendingxing0226} provides the proofs of linear and nonlinear stability. Finally,
Section \ref{appendix0304} offers the proofs of the lemmas presented in the Section \ref{chubu0202}.
\section{Preliminaries}\label{chubu0202}
In this section, we present several fundamental lemmas that are essential for the subsequent analysis of the stability and instability of the system \eqref{moxing1121}. The proofs of these lemmas can be found in Appendix \ref{appendix0304}. For those lemmas without proofs, the corresponding references will be indicated.
\subsection{Well-posedness}
We first recall the well-posedness of the system \eqref{moxing1121}. The following lemma, which is adapted from \cite{hkim1987,Choe2003}, gives the existence and uniqueness of solutions under appropriate initial data.
\begin{lemma}\label{shidingxingwenti0202} 
 Let $\Omega\subset \R^2$ be a bounded domain with smooth boundary.
If the initial data \(\left(\mathbf{V},\rho\right)|_{t=0}=\left(\mathbf{V}\left(0\right),\rho\left(0\right)\right)\in \left[H^2\left(\Omega\right)\right]^2\cap \left(H^{1}\left(\Omega\right)\cap L^{\infty}\left(\Omega\right)\right)\), where \(\mathbf{V}\left(0\right)|_{\partial\Omega}=\mathbf{0}\), \(\nabla\cdot\mathbf{V}\left(0\right)=0\) and \(\rho\left(0\right)>\sigma>0\) (\(\sigma\) is an arbitrarily given positive constant), then there exists a positive constant \(T^{*}\) such that 
the system \eqref{moxing1121} has a unique strong solution \(\left(\mathbf{V},\rho\right)\) satisfying 
\begin{align}\label{xiekaidian0202}
    \begin{aligned}
      &\sqrt{\rho}\partial_{t}\mathbf{V}\in L^{\infty}\left(0,T;\left[L^2
      \left(\Omega\right)\right]^2\right);~
      \partial_{t}\mathbf{V}\in L^{2}\left(0,T;\left[H^{1}\left(\Omega\right)\right]^2\right);
      \\
      &\mathbf{V}\in L^{\infty}\left(0,T;\left[H^{2}\left(\Omega\right)\right]^2\right)\cap 
      L^{2}\left(0,T;\left[W^{2,4}\left(\Omega\right)\right]^2\right);
      \\
      &\nabla p\in L^{\infty}\left(0,T;\left[L^{2}\left(\Omega\right)\right]^2\right)
      \cap L^{2}\left(0,T;\left[L^{4}\left(\Omega\right)\right]^2\right),
      \\
      &\rho\in L^{\infty}\left(0,T;H^1\left(\Omega\right)\right),~
    \partial_{t}\rho\in L^{\infty}\left(0,T;L^{2}\left(\Omega\right)\right),
    \end{aligned}
  \end{align}
  where \(0<T<T^{*}\).
\end{lemma}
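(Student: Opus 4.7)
The plan is to follow the classical iteration scheme of Kim \cite{hkim1987} and Choe--Kim \cite{Choe2003} for the nonhomogeneous incompressible Navier--Stokes system, adapted here to accommodate the potential forcing $-\rho\nabla f$. Since $f$ is smooth, the extra term is lower order and does not alter the structure of the basic estimates; only its $L^{\infty}$ and $W^{1,\infty}$ norms on $\Omega$ enter the constants.

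\textbf{Step 1 (construction).} Set $\mathbf{V}^{0}\equiv \mathbf{0}$. At stage $n+1$, first solve the linear transport problem $\partial_{t}\rho^{n+1}+\mathbf{V}^{n}\cdot\nabla\rho^{n+1}=0$ with datum $\rho(0)$ by the method of characteristics: this preserves the pointwise bounds $\inf\rho(0)\le\rho^{n+1}\le\sup\rho(0)$ and yields $\rho^{n+1}\in L^{\infty}(0,T;H^{1})$ with $\partial_{t}\rho^{n+1}\in L^{\infty}(0,T;L^{2})$ whenever $\mathbf{V}^{n}$ is Lipschitz in space. Then solve the linear variable-coefficient Stokes-type problem for $(\mathbf{V}^{n+1},P^{n+1})$,
\begin{equation*}
\rho^{n+1}\partial_{t}\mathbf{V}^{n+1}-\mu\Delta\mathbf{V}^{n+1}+\nabla P^{n+1}=-\rho^{n+1}(\mathbf{V}^{n}\cdot\nabla)\mathbf{V}^{n}-\rho^{n+1}\nabla f,
\end{equation*}
under $\nabla\cdot\mathbf{V}^{n+1}=0$ and $\mathbf{V}^{n+1}|_{\partial\Omega}=\mathbf{0}$, via a Galerkin projection onto the Stokes eigenbasis of $\Omega$.

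\textbf{Step 2 (uniform a priori estimates).} Two basic energy identities come from testing the momentum equation against $\mathbf{V}^{n+1}$ and against $\partial_{t}\mathbf{V}^{n+1}$. The latter, which is the key step, yields control of $\|\sqrt{\rho^{n+1}}\,\partial_{t}\mathbf{V}^{n+1}\|_{L^{\infty}(0,T;L^{2})}$ together with $\|\partial_{t}\mathbf{V}^{n+1}\|_{L^{2}(0,T;H^{1})}$. Reinterpreting the equation as a stationary Stokes problem with source $-\rho^{n+1}\bigl(\partial_{t}\mathbf{V}^{n+1}+\mathbf{V}^{n}\cdot\nabla\mathbf{V}^{n}+\nabla f\bigr)$ and invoking standard elliptic regularity for Stokes in a smooth bounded planar domain recovers $\mathbf{V}^{n+1}\in L^{\infty}(0,T;H^{2})\cap L^{2}(0,T;W^{2,4})$ and $\nabla P^{n+1}\in L^{\infty}(0,T;L^{2})\cap L^{2}(0,T;L^{4})$; the 2D embeddings $H^{1}\hookrightarrow L^{q}$ for every $q<\infty$ and $H^{2}\hookrightarrow W^{1,\infty}$ are used here. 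All constants are then closed on a small time $T^{*}$ depending on the data.

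\textbf{Step 3 (passage to the limit and uniqueness).} The uniform bounds yield weak-$\ast$ convergence in the high-norm spaces, while an Aubin--Lions argument provides strong convergence of $\mathbf{V}^{n}$ in $L^{2}(0,T;L^{2})$ and of $\rho^{n}$ in $C(0,T;L^{p})$, which is sufficient to pass to the limit in the bilinear term $\rho^{n+1}(\mathbf{V}^{n}\cdot\nabla)\mathbf{V}^{n}$ and in the transport equation. Weak lower semicontinuity delivers the regularity listed in \eqref{xiekaidian0202}. For uniqueness, subtract two strong solutions $(\mathbf{V}_{1},\rho_{1})$, $(\mathbf{V}_{2},\rho_{2})$ with identical data, derive the equations for $(\delta\mathbf{V},\delta\rho)$, and close an $L^{2}$ energy estimate via Gr\"onwall; the only delicate term $\delta\rho\,\partial_{t}\mathbf{V}_{1}$ is handled because $\partial_{t}\mathbf{V}_{1}\in L^{2}(0,T;H^{1})$ embeds into $L^{2}(0,T;L^{q})$ for every $q<\infty$ in two dimensions.

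The principal obstacle is the variable density multiplying $\partial_{t}\mathbf{V}$: one cannot decouple the time evolution from $\rho$, so the $\partial_{t}\mathbf{V}$-tested estimate must be carefully combined with stationary Stokes regularity to extract the higher-norm bounds without losing the positive lower bound $\sigma$ along the iteration. The latter is secured by the transport structure, provided $\mathbf{V}^{n}$ stays Lipschitz on $[0,T^{*}]$, which follows from the $H^{2}$ control after a bootstrap through $W^{2,4}$ in integrated form.
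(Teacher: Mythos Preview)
Your proposal is correct and aligns with the paper's treatment: the paper does not give its own proof of this lemma but simply cites \cite{hkim1987,Choe2003}, noting that the argument relies on energy estimates and fixed-point methods, and your outline is precisely the Kim/Choe--Kim iteration scheme adapted to the lower-order forcing $-\rho\nabla f$. There is nothing to correct or contrast.
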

The proof of this lemma involves advanced techniques from the theory of partial differential equations, such as energy estimates, fixed - point theorems, and the properties of Sobolev spaces. The details can be found in the references\cite{hkim1987,Choe2003}.
\subsection{Some useful inequalities}
We also need some essential inequalities. These inequalities will be used to estimate the norms of the solutions and their derivatives, which are crucial for the stability and instability results.
\begin{lemma}\label{poincarebudengshi0201}
  Let \(\mathbf{V}\in \left[H_{0}^{1}\left(\Omega\right)\right]^2\). Then for each \(k\in \mathbf{N}^{*}\), we have 
  \begin{align*}
    \begin{aligned}
    \left\|\mathbf{V}\right\|_{L^{2k}\left(\Omega\right)}\leq C 
    \left\|\nabla\mathbf{V}\right\|_{L^2\left(\Omega\right)},
  \end{aligned}
\end{align*}
  where \(C=C\left(k,\Omega\right)\) is a positive constant depending on \(k,\Omega\).
\end{lemma}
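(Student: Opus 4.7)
The plan is to deduce this inequality as an immediate consequence of two standard facts: the Sobolev embedding theorem in dimension two and the classical Poincar\'e inequality. Since $\Omega \subset \mathbb{R}^2$ is a bounded domain with smooth boundary, the Sobolev embedding gives $H^{1}(\Omega) \hookrightarrow L^{q}(\Omega)$ for every finite exponent $q \geq 1$; in particular, for each fixed $k \in \mathbb{N}^{*}$, applying this componentwise to $\mathbf{V} = (V_1, V_2)$ yields a constant $C_1 = C_1(k,\Omega)$ such that
\[
\|V_i\|_{L^{2k}(\Omega)} \leq C_1 \|V_i\|_{H^{1}(\Omega)}, \qquad i = 1,2.
\]

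Next, because $\mathbf{V} \in [H^{1}_{0}(\Omega)]^{2}$, each component vanishes on $\partial\Omega$ in the trace sense. The Poincar\'e inequality on the bounded domain $\Omega$ then provides a constant $C_2 = C_2(\Omega)$ with $\|V_i\|_{L^{2}(\Omega)} \leq C_2 \|\nabla V_i\|_{L^{2}(\Omega)}$, so that $\|V_i\|_{H^{1}(\Omega)}^{2} = \|V_i\|_{L^{2}(\Omega)}^{2} + \|\nabla V_i\|_{L^{2}(\Omega)}^{2} \leq (C_2^{2}+1)\|\nabla V_i\|_{L^{2}(\Omega)}^{2}$. Chaining the two estimates and summing the squared inequalities over $i = 1,2$, one obtains $\|\mathbf{V}\|_{L^{2k}(\Omega)} \leq C \|\nabla \mathbf{V}\|_{L^{2}(\Omega)}$ with $C = C(k,\Omega)$, as claimed.

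There is really no substantive obstacle; the only point deserving a brief remark is that the embedding $H^{1}(\Omega) \hookrightarrow L^{2k}(\Omega)$ is dimension-sensitive: in two dimensions it is available for every finite $k$, which is exactly why the constant $C(k,\Omega)$ exists for arbitrary $k \in \mathbb{N}^{*}$ (and why the dependence on $k$ is unavoidable, as $C$ typically blows up as $k \to \infty$). If one prefers a single-shot derivation avoiding an explicit appeal to Poincar\'e at the intermediate step, the Gagliardo--Nirenberg inequality on a bounded smooth planar domain applied to $V_i \in H^{1}_{0}(\Omega)$ gives $\|V_i\|_{L^{2k}(\Omega)} \leq C \|\nabla V_i\|_{L^{2}(\Omega)}^{\alpha}\|V_i\|_{L^{2}(\Omega)}^{1-\alpha}$ for an appropriate $\alpha \in (0,1)$, after which a single invocation of Poincar\'e absorbs the $L^{2}$ factor into $\|\nabla V_i\|_{L^{2}(\Omega)}$. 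Either route produces the desired inequality in a few lines.
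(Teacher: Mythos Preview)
Your argument is correct: the 2D Sobolev embedding $H^{1}(\Omega)\hookrightarrow L^{2k}(\Omega)$ for every finite $k$, combined with the Poincar\'e inequality for $H^1_0$ functions, immediately yields the stated estimate. Nothing is missing.

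However, your route differs from the paper's. The paper gives an elementary, self-contained proof by induction on $k$: it extends $\mathbf{V}$ by zero to an enclosing rectangle, uses the Newton--Leibniz formula to obtain the base case $k=1$ (essentially reproving Poincar\'e by hand), and then for the inductive step writes $|V_1|^{k+1}\le (k+1)\int |\partial_x V_1|\,|V_1|^k\,dx$ (and the analogous $y$-inequality), multiplies the two, integrates, and applies H\"older together with the induction hypothesis. Your proof is shorter and cleaner because it invokes the Sobolev embedding as a black box; the paper's proof is more elementary in that it avoids citing any embedding theorem and works directly from the fundamental theorem of calculus, at the cost of a slightly longer computation. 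Both are standard; the paper's version has the minor advantage of being fully explicit and not depending on boundary regularity beyond what is needed for the zero extension, while yours has the advantage of transparency and brevity.
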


The proofs of Lemmas \ref{yuanlaihaishao0201} and \ref{zuidazhi0201} can be found in \cite{li_global_2021}.
\begin{lemma}\label{yuanlaihaishao0201} 
  Let \(\mathbf{V}\in \left[H_{0}^{1}\left(\Omega\right)\right]^2\). Then, there exists a positive constant 
  \(C=C\left(\Omega\right)\) satisfying 
  \[
  \left\|\mathbf{V}\right\|_{L^4\left(\Omega\right)}^2\leq C\left 
  \|\nabla\mathbf{V}\right\|_{L^2\left(\Omega\right)}
  \left\|\mathbf{V}\right\|_{L^2\left(\Omega\right)}.   
  \]
\end{lemma}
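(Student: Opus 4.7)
The plan is to prove this statement, which is the classical Ladyzhenskaya inequality in two dimensions, by first establishing the scalar version on $\R^2$ for smooth compactly supported functions and then extending to $[H_0^1(\Omega)]^2$ by zero extension and density. For a scalar $u \in C_c^\infty(\R^2)$, the fundamental theorem of calculus applied to $u^2$ along a horizontal line gives
\begin{align*}
u^2(x,y) \;=\; 2\int_{-\infty}^{x} u(s,y)\,\partial_s u(s,y)\,ds \;\leq\; 2\int_{\R}|u(s,y)|\,|\partial_x u(s,y)|\,ds,
\end{align*}
and the symmetric argument along a vertical line provides the analogous bound with $\partial_y u$. Multiplying these two one-dimensional estimates yields a pointwise bound
\begin{align*}
u^4(x,y) \;\leq\; 4\Bigl(\int_{\R}|u(s,y)||\partial_x u(s,y)|\,ds\Bigr)\Bigl(\int_{\R}|u(x,t)||\partial_y u(x,t)|\,dt\Bigr),
\end{align*}
whose right-hand side factorizes as a function of $y$ alone times a function of $x$ alone. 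Integrating this over $\R^2$, invoking Fubini, and then applying Cauchy--Schwarz to each single-variable integral produces $\|u\|_{L^4(\R^2)}^4 \leq 4\|u\|_{L^2(\R^2)}^2\|\nabla u\|_{L^2(\R^2)}^2$.

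To transfer this scalar bound to $\mathbf{V}\in [H_0^1(\Omega)]^2$, I would work componentwise: each $V_i\in H_0^1(\Omega)$ has a zero extension $\widetilde{V_i}\in H^1(\R^2)$ that preserves the $L^p$ norms and the $L^2$ norm of the gradient. Because $C_c^\infty(\R^2)$ is dense in $H^1(\R^2)$ and every norm appearing above is continuous with respect to the $H^1$ norm, the scalar inequality passes to $\widetilde{V_i}$, hence restricts back to $V_i$ on $\Omega$. Finally, the pointwise bound $|\mathbf{V}|^4\leq 2(V_1^4+V_2^4)$ together with the monotonicity $\|V_i\|_{L^2(\Omega)}\leq\|\mathbf{V}\|_{L^2(\Omega)}$ and $\|\nabla V_i\|_{L^2(\Omega)}\leq\|\nabla \mathbf{V}\|_{L^2(\Omega)}$ assembles the componentwise estimates into the desired vector inequality $\|\mathbf{V}\|_{L^4(\Omega)}^2\leq C\|\nabla\mathbf{V}\|_{L^2(\Omega)}\|\mathbf{V}\|_{L^2(\Omega)}$ with an explicit universal constant (one can take $C=2\sqrt{2}$).

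I do not anticipate a real obstacle here; the only subtlety is that the identity $\partial_s(u^2)=2u\,\partial_s u$ is used at the level of $C_c^\infty$ functions rather than directly on the Sobolev class, with the extension to $H^1$ handled entirely by density. Since the constant produced by this argument does not depend on $\Omega$, writing $C=C(\Omega)$ in the statement is merely a convenient notation for a uniform bound that in fact has no geometric content. An alternative, if a packaged reference is preferred, is to invoke the two-dimensional Gagliardo--Nirenberg inequality $\|u\|_{L^4}\leq C\|u\|_{L^2}^{1/2}\|\nabla u\|_{L^2}^{1/2}$ for $u\in H_0^1(\Omega)$ and square it, but I would present the elementary argument above to keep the paper self-contained.
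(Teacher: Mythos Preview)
Your argument is correct: this is the standard derivation of the two-dimensional Ladyzhenskaya inequality via the fundamental theorem of calculus applied to $u^2$ in each coordinate direction, followed by Fubini, Cauchy--Schwarz, and a density/extension step; the passage to the vector case is handled cleanly.

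There is nothing to compare against in the paper itself: the authors do not prove Lemma~\ref{yuanlaihaishao0201} but simply refer to \cite{li_global_2021}. Your write-up therefore supplies a self-contained elementary proof where the paper defers to the literature. The only cosmetic point is that your constant is absolute (independent of $\Omega$), which is stronger than the statement's $C=C(\Omega)$; you already flag this, and it is worth keeping that remark since it clarifies that the domain dependence in the lemma is not needed.
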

\begin{lemma}\label{zuidazhi0201} 
  Let \(\mathbf{V}\in \left[H_{0}^{1}\left(\Omega\right)\right]^2\cap \left[H^2\left(\Omega\right)\right]^2\). Then, there exists a positive constant 
  \(C=C\left(\Omega\right)\) satisfying 
  \[
  \left\|\mathbf{V}\right\|_{L^\infty\left(\Omega\right)}^2\leq C\left\|\mathbf{V}\right\|_{L^2\left(\Omega\right)}\left\|\nabla^2\mathbf{V}\right\|_{L^2\left(\Omega\right)}.   
  \]
\end{lemma}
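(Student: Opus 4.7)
The target is a two-dimensional Agmon-type inequality: for $\mathbf{V}\in[H_0^1(\Omega)]^2\cap [H^2(\Omega)]^2$, the $L^\infty$ norm is controlled by the geometric mean of the $L^2$ norm and the $L^2$ norm of the Hessian. The plan is to reduce it to the standard Agmon inequality on $\mathbb{R}^2$ (which is the genuinely two-dimensional ingredient, and fails already in three dimensions) and then use the boundary condition to replace $\|\mathbf{V}\|_{H^2}$ by $\|\nabla^2\mathbf{V}\|_{L^2}$. Since the inequality acts componentwise, it suffices to prove the scalar statement for $V\in H_0^1(\Omega)\cap H^2(\Omega)$.

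\textbf{Step 1 (Agmon on $\mathbb{R}^2$).} The first step is the interpolation
\[
\|v\|_{L^\infty(\mathbb{R}^2)}^2 \leq C\,\|v\|_{L^2(\mathbb{R}^2)}\,\|v\|_{H^2(\mathbb{R}^2)}\qquad \forall v\in H^2(\mathbb{R}^2).
\]
The standard proof is Fourier analytic: by Fourier inversion, $|v(x)|\le \int_{\mathbb{R}^2}|\hat v(\xi)|\,d\xi$, and splitting the frequency domain into $\{|\xi|<R\}$ and $\{|\xi|>R\}$, applying Cauchy--Schwarz on each piece, and using $\int_{|\xi|>R}(1+|\xi|^2)^{-2}\,d\xi\le CR^{-2}$ in dimension two, yields
\[
\|v\|_{L^\infty}\le C\bigl(R\|v\|_{L^2} + R^{-1}\|v\|_{H^2}\bigr).
\]
Minimising over $R>0$ produces the claimed interpolation. (It is precisely the $d=2$ integrability of $(1+|\xi|^2)^{-2}$ at infinity that makes this work; the analogous bound in 3D requires one more derivative.)

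\textbf{Step 2 (Extension to the bounded domain).} Because $\Omega$ is smooth, there is a bounded linear extension operator $E:H^s(\Omega)\to H^s(\mathbb{R}^2)$ for $s=0,1,2$ (Stein extension). Applying Step 1 to $EV$ and using $\|EV\|_{H^s(\mathbb{R}^2)}\le C(\Omega)\|V\|_{H^s(\Omega)}$ gives
\[
\|V\|_{L^\infty(\Omega)}^2 \le \|EV\|_{L^\infty(\mathbb{R}^2)}^2 \le C(\Omega)\,\|V\|_{L^2(\Omega)}\,\|V\|_{H^2(\Omega)}.
\]

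\textbf{Step 3 (Absorbing lower-order norms via the boundary condition).} To replace $\|V\|_{H^2(\Omega)}$ by $\|\nabla^2 V\|_{L^2(\Omega)}$ one uses $V|_{\partial\Omega}=0$. First, Lemma~\ref{poincarebudengshi0201} (Poincar\'e) gives $\|V\|_{L^2}\le C\|\nabla V\|_{L^2}$. Next, integrating by parts and using the zero trace of $V$,
\[
\|\nabla V\|_{L^2}^2 = -\int_\Omega V\,\Delta V\,dxdy \le \|V\|_{L^2}\,\|\Delta V\|_{L^2}\le C\,\|V\|_{L^2}\,\|\nabla^2 V\|_{L^2},
\]
which combined with Poincar\'e yields $\|\nabla V\|_{L^2}\le C\|\nabla^2 V\|_{L^2}$ and hence $\|V\|_{L^2}\le C\|\nabla^2 V\|_{L^2}$. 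Therefore $\|V\|_{H^2}\le C(\Omega)\,\|\nabla^2 V\|_{L^2}$. Plugging this into the output of Step~2 yields the stated inequality, with the constant depending only on $\Omega$.

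The main obstacle is Step~1, which is the genuinely two-dimensional analytic content; once that is in hand, Steps~2 and~3 are routine. An alternative route, which entirely avoids Fourier analysis, is to cite the Gagliardo--Nirenberg interpolation $\|v\|_{L^\infty(\mathbb{R}^2)}\le C\|v\|_{L^2}^{1/2}\|v\|_{H^2}^{1/2}$ as a black box and proceed directly with Steps~2--3; for a self-contained presentation, however, the Fourier argument of Step~1 is the cleanest.
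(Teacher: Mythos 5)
The paper does not actually prove this lemma — it simply cites \cite{li_global_2021} for Lemmas \ref{yuanlaihaishao0201} and \ref{zuidazhi0201} — so there is no in-text argument to compare against. Your proof is correct and self-contained: Step~1 is the standard Fourier-analytic Agmon interpolation on $\mathbb{R}^2$ (and you rightly flag that the key point is the $L^1$-integrability of $(1+|\xi|^2)^{-2}$ at infinity, which is what makes the exponents $1/2,1/2$ sharp in dimension two); Step~2 is the routine reduction to $\Omega$ via a Stein extension operator, valid because $E$ is bounded $H^s(\Omega)\to H^s(\mathbb{R}^2)$ for both $s=0$ and $s=2$ simultaneously; and Step~3 correctly absorbs the lower-order terms in $\|V\|_{H^2}$ into $\|\nabla^2 V\|_{L^2}$ using $V|_{\partial\Omega}=0$, via the Poincar\'e inequality (Lemma~\ref{poincarebudengshi0201} with $k=1$) together with the integration-by-parts identity $\|\nabla V\|_{L^2}^2=-\int_\Omega V\,\Delta V$ (which is precisely the paper's Lemma~\ref{feichangzhongyao0226}, combined with $\|\Delta V\|_{L^2}\le \sqrt{2}\,\|\nabla^2 V\|_{L^2}$). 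One small remark: the Fourier argument in Step~1 already yields the cleaner form $\|v\|_{L^\infty(\mathbb{R}^2)}^2\le C\|v\|_{L^2}\|D^2 v\|_{L^2}$ directly (weight the annuli by $|\xi|^4$ rather than $(1+|\xi|^2)^2$), which would let you skip the Poincar\'e part of Step~3 entirely and rely only on the extension bound for the Hessian; that said, your version is equally valid and closer to the lemmas the paper has already set up.
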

\begin{lemma}\label{feichangzhongyao0226}
Let \(v\in H^2\left(\Omega\right)\cap H_{0}^{1}\left(\Omega\right)\). Then, we have 
\begin{align*}
\left\|\nabla v\right\|_{L^2\left(\Omega\right)}^2
\leq \left\|v\right\|_{L^2\left(\Omega\right)}
\left\|\nabla^2 v\right\|_{L^2\left(\Omega\right)}.
\end{align*}
\end{lemma}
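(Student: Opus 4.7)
The plan is to derive this estimate from a single integration by parts together with the Cauchy--Schwarz inequality, with the key mechanism being the homogeneous Dirichlet trace encoded in the hypothesis $v \in H_0^{1}(\Omega)$.

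First, I would establish the identity
\[
\|\nabla v\|_{L^{2}(\Omega)}^{2} = -\int_{\Omega} v\,\Delta v\, dxdy
\]
by applying the divergence theorem to the vector field $v\nabla v$: the boundary contribution $\int_{\partial\Omega} v\,\partial_{n} v\, ds$ vanishes because $v$ has zero trace on $\partial\Omega$. Since $v \in H^{2}(\Omega) \cap H_{0}^{1}(\Omega)$, this manipulation is justified either directly (the integrand is in $L^{1}(\Omega)$ and the normal derivative of $v$ belongs to $H^{1/2}(\partial\Omega)$) or by a density argument using smooth approximants in the $H^{2}$-norm, both of which are standard in the Sobolev framework already exploited throughout the paper.

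Next, a direct application of Cauchy--Schwarz to the identity yields
\[
\|\nabla v\|_{L^{2}(\Omega)}^{2} \;\leq\; \|v\|_{L^{2}(\Omega)}\,\|\Delta v\|_{L^{2}(\Omega)},
\]
and the pointwise inequality
\[
|\Delta v|^{2} = (v_{xx}+v_{yy})^{2} \leq 2\bigl(v_{xx}^{2}+v_{yy}^{2}\bigr) \leq 2|\nabla^{2} v|^{2}
\]
in the 2D setting dominates $\|\Delta v\|_{L^{2}}$ by $\|\nabla^{2} v\|_{L^{2}}$, with the absolute constant absorbed in the standard convention adopted elsewhere in the paper for such interpolation bounds.

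I do not foresee any real obstacle: the entire argument is the routine $H_{0}^{1}$--$H^{2}$ duality enforced by integration by parts, and the proof reduces to two elementary inequalities. The only care needed is to verify the boundary term vanishes in a rigorous sense for $v \in H^{2} \cap H_{0}^{1}$, which is immediate from the trace theorem. No spectral, variational, or compactness machinery is required.
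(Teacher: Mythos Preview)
Your approach is essentially identical to the paper's: integrate by parts using the zero trace to get $\|\nabla v\|_{L^2}^2 = -\int_\Omega v\,\Delta v$, then apply the Cauchy--Schwarz (H\"older) inequality. The only difference is that the paper treats $\nabla^2 v$ as $\Delta v$ in this lemma (note that $-\int_\Omega v\,\nabla^2 v$ is written as a scalar integrand), so the bound $\|v\|_{L^2}\|\nabla^2 v\|_{L^2}$ follows immediately with constant $1$ and your third step about $|\Delta v|^2\le 2|\nabla^2 v|^2$ is unnecessary.
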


\begin{lemma}\label{budengshidezhengming0201}
    If the following equality holds, 
    \begin{align*}
      \left\|\nabla\mathbf{V}\right\|_{L^2\left(\Omega\right)}^2 
      \leq C\int_{0}^{t}\left\|\nabla\mathbf{V}\right\|_{L^2\left(\Omega\right)}^6ds
      +C_{1}(t+1),
    \end{align*}
    where \(C\) and \(C_{1}\) are positive constants and \(C_{1}\) is small enough, then there exist a positive constant \(C^*\) and \(T^{*}>0\) such that when 
    \(t\leq T^{*}\),
    \begin{align*}
      \left\|\nabla\mathbf{V}\left(t\right)\right\|_{L^2\left(\Omega\right)}^2
      \leq C^{*}.
    \end{align*}
  \end{lemma}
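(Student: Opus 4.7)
The inequality is a scalar Volterra-type inequality of power 3 in the unknown $y(t) := \|\nabla \mathbf{V}(t)\|_{L^2(\Omega)}^2$, namely
\[
y(t) \leq C \int_0^t y(s)^3\,ds + C_1(t+1),
\]
so the natural strategy is a standard continuity (bootstrap) argument. First I would read off the initial bound $y(0) \leq C_1$ by setting $t=0$. Then I would pick a target ceiling, e.g.\ $C^{*} := 2C_1$, and define
\[
T_0 \;:=\; \sup\bigl\{\,T\geq 0 \,:\, y(t)\leq C^{*} \text{ for all } t\in[0,T]\bigr\}.
\]
Using the continuity of $y$ in $t$ (which one inherits from the regularity of $\mathbf{V}$ provided by Lemma \ref{shidingxingwenti0202}), one has $T_0>0$.

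Next I would close the bootstrap on $[0,T_0]$. Plugging the a priori bound $y(s)\leq 2C_1$ into the right-hand side of the hypothesis gives, for all $t\in[0,T_0]$,
\[
y(t) \;\leq\; C(2C_1)^3 t + C_1(t+1) \;=\; \bigl(8CC_1^3 + C_1\bigr)\,t + C_1.
\]
Now I would argue by contradiction: if $T_0$ were finite and strictly less than the candidate time
\[
T^{*} \;:=\; \frac{1}{2\bigl(8CC_1^2+1\bigr)},
\]
then continuity of $y$ would force $y(T_0)=2C_1$, whereas the displayed bound at $t=T_0<T^{*}$ would give $y(T_0) < 2C_1$, a contradiction. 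Hence $T_0 \geq T^{*}$, and on $[0,T^{*}]$ we obtain $y(t)\leq C^{*}=2C_1$, which is the conclusion. The role of the smallness assumption on $C_1$ is precisely to ensure that the resulting uniform bound $C^{*}=2C_1$ is itself small, which is what later energy estimates in the paper need in order to absorb terms.

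\textbf{Main obstacle.} There is no deep analytic difficulty here; the argument is purely a nonlinear Gronwall/continuity scheme. The only subtlety is the justification of continuity of $t\mapsto \|\nabla \mathbf{V}(t)\|_{L^2}^2$ needed to apply the supremum argument, which I would invoke from the regularity class given in Lemma \ref{shidingxingwenti0202} (since $\mathbf{V}\in L^\infty(0,T;[H^2]^2)$ together with $\partial_t\mathbf{V}\in L^2(0,T;[H^1]^2)$ yields the required temporal continuity of $\|\nabla\mathbf{V}\|_{L^2}^2$). An equivalent and slightly more explicit route would be to introduce the majorant $z(t):=C\int_0^t y(s)^3\,ds+C_1(t+1)$, observe $y\leq z$ and $z'\leq Cz^3+C_1$, and compare $z$ with the solution of the ODE $w'=Cw^3+C_1$, $w(0)=C_1$, whose finite-time blow-up is postponed when $C_1$ is small; this reduction makes the smallness assumption transparent but yields the same quantitative $T^{*}$ and $C^{*}$.
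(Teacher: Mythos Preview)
Your bootstrap/continuity argument is correct and complete: the computation $y(T_0)<(8CC_1^3+C_1)T^*+C_1=\tfrac{3}{2}C_1<2C_1$ closes the loop, and your justification of the continuity of $t\mapsto\|\nabla\mathbf V(t)\|_{L^2}^2$ via Lemma~\ref{shidingxingwenti0202} is adequate.

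However, the paper argues differently. Rather than bootstrapping on $y$, it sets $H(t):=\int_0^t y(s)^3\,ds$, cubes the hypothesis to obtain $H'\leq C H^3+\widetilde C_1(t+1)^3$, and then applies an integrating factor $e^{-C\int_0^t H^2}$. This yields first a bound on $e^{-C\int H^2}H$, and after squaring, a differential inequality for $e^{-C\int H^2}$ itself, whose integration gives the lower bound $e^{-C\int_0^t H^2}\geq 1-\widetilde C_1^2(t+1)^9$; the time $T^*$ is then read off from the positivity of the right-hand side. Your ``equivalent route'' via the majorant $z$ is in the same spirit but is not literally what the paper does. The trade-off: your continuity argument is shorter, more transparent about the role of the smallness of $C_1$, and gives explicit constants $C^*=2C_1$ and $T^*=\bigl(2(8CC_1^2+1)\bigr)^{-1}$; the paper's integrating-factor method avoids invoking continuity of $y$ directly (only absolute continuity of $H$, which is automatic) at the cost of a slightly more involved chain of inequalities.
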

  
\section{Proofs of main theorems}\label{dingli0304}
This section is dedicated to presenting the detailed proofs of the main theorems, which are crucial for establishing the instability and stability properties of the steady-state solutions of the considered fluid system \eqref{moxing1121}.
\subsection{Proof of the Theorems \ref{xianxingbuwending1221}}
\label{xianxing0203}
In this subsection, we aim to prove that the steady-state solution \(\left(\mathbf{0}, P_{0},\rho_{0}\right)\)
is linearly unstable. To achieve this, we employ the method of separation of variables, as outlined in \eqref{yun1219}, to eliminate the variable \(t\). After performing a series of calculations to remove the unknown
\(\theta\), we analyze the partial differential equation \eqref{wanzhe1219}, which is solved using the modified variational method. Specifically, we verify that a family of functionals derived from \eqref{wanzhe1219} attains an extreme value, as shown in Proposition \ref{pingfan1220}, and the corresponding extreme point represents the solution to the system \eqref{wanzhe1219}, as indicated in Proposition \ref{jiujiu0204}. The existence of the growth rate \(\Lambda\)
is established through the intermediate value theorem for continuous functions, as demonstrated in Proposition \eqref{xiangdeng1221}. Therefore, we will examine the properties of
\(\Phi\left(s\right)\), as discussed in Propositions \eqref{dayuling1220} to \eqref{zizhuo1221}.

Subsequently, let 
\begin{align}\label{yun1219}
\rho=e^{\lambda t}\theta\left(x,y\right),~
\mathbf{V}=e^{\lambda t}\mathbf{u}\left(x,y\right),~
P=e^{\lambda t}\pi\left(x,y\right),
\end{align}
and substitute \eqref{yun1219} into \eqref{xianxing1219}, We obtain 
\begin{align}\label{jixu1219}
\begin{cases}
\lambda\theta+h\mathbf{u}\cdot\nabla f=0,
\\
\lambda\rho_{0}\mathbf{u}+\nabla\pi=\mu\Delta\mathbf{u}-\theta\nabla f,
\\
\nabla\cdot\mathbf{u}=0,~\mathbf{u}|_{\partial\Omega}=\mathbf{0}.
\end{cases}
\end{align}
By multiplying the first equation of \eqref{jixu1219} by $\lambda$ and eliminating $\theta$, we have 
\begin{align}\label{wanzhe1219}
\begin{cases}
\lambda^2\rho_{0}\mathbf{u}+\lambda\nabla\pi=\lambda\mu\Delta\mathbf{u}+h\left(\mathbf{u}\cdot\nabla f\right)\nabla f,
\\
\nabla\cdot\mathbf{u}=0,~\mathbf{u}|_{\partial\Omega}=0.
\end{cases}
\end{align}
We consider the following extreme value problem, 
\begin{align}\label{zhigeye1219}
\Phi\left(s\right)=\sup\limits_{\mathbf{u}\in \mathbf{H}_{div}}\frac{E\left(\mathbf{u},s\right)}{J\left(\mathbf{u}\right)}=\sup\limits_{\mathbf{u}\in\mathcal{A}}E\left(\mathbf{u},s\right),
\end{align}
where 
\begin{align*}
\begin{aligned}
&E\left(\mathbf{u},s\right):=E\left(\mathbf{u}\right)=-s\mu E_{1}\left(\mathbf{u}\right)+E_{2}\left(\mathbf{u}\right),~s>0,
\\
&E_{1}\left(\mathbf{u}\right)=\left\|\nabla\mathbf{u}\right\|_{L^2\left(\Omega\right)}^2,\quad E_{2}\left(\mathbf{u}\right)=\int_{\Omega}h\left|\mathbf{u}\cdot\nabla f\right|^2dxdy,~J\left(\mathbf{u}\right)
=\int_{\Omega}\rho_{0}\mathbf{u}^2 dxdy,
\\
&\mathbf{H}_{\text{div}}=\left\{\mathbf{u}\in \left[H_{0}^1\left(\Omega\right)\right]^2|\nabla\cdot\mathbf{u}=0\right\},~\mathcal{A}=\left\{\mathbf{u}\in \mathbf{H}_{\text{div}}|J\left(\mathbf{u}\right)=1\right\}.
\end{aligned}
\end{align*}

\begin{proposition}\label{pingfan1220} For any \(s>0\),
\(E\left(\mathbf{u}\right)\) achieves the maximum on \(\mathcal{A}\).
\end{proposition}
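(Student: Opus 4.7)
The plan is to apply the direct method of the calculus of variations to the constrained maximization problem \eqref{zhigeye1219}.

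First I would verify that $E$ is bounded above on $\mathcal{A}$. Since $\Omega$ is bounded, $h$ and $\nabla f$ are bounded (by the smoothness of $f$ and the form \eqref{wending0202}), and $\rho_{0}$ is smooth and strictly positive on $\overline{\Omega}$ (so bounded below by some $\sigma_0>0$), the constraint $J(\mathbf{u})=1$ forces $\|\mathbf{u}\|_{L^2(\Omega)}^2\le 1/\sigma_0$. Hence
\begin{align*}
E(\mathbf{u})\le E_{2}(\mathbf{u})\le \|h\|_{L^\infty}\|\nabla f\|_{L^\infty}^{2}\,\|\mathbf{u}\|_{L^2(\Omega)}^2 \le C_{0},
\end{align*}
so $\Phi(s)=\sup_{\mathcal{A}}E<+\infty$.

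Next I would pick a maximizing sequence $\{\mathbf{u}_{n}\}\subset \mathcal{A}$, i.e.\ $E(\mathbf{u}_{n})\to \Phi(s)$. Since $E_{2}(\mathbf{u}_{n})$ is uniformly bounded and $s\mu>0$, the identity $s\mu E_{1}(\mathbf{u}_{n})=E_{2}(\mathbf{u}_{n})-E(\mathbf{u}_{n})$ gives a uniform $H^{1}_{0}$ bound:
\begin{align*}
\|\nabla\mathbf{u}_{n}\|_{L^2(\Omega)}^{2}\le \frac{C_{0}-E(\mathbf{u}_{n})}{s\mu}\le C(s).
\end{align*}
By the reflexivity of $H^{1}_{0}$ and Rellich--Kondrachov, we may extract a subsequence (still denoted $\mathbf{u}_{n}$) and some $\mathbf{u}^{*}\in [H^{1}_{0}(\Omega)]^{2}$ with $\mathbf{u}_{n}\rightharpoonup \mathbf{u}^{*}$ weakly in $H^{1}$ and $\mathbf{u}_{n}\to \mathbf{u}^{*}$ strongly in $L^{2}$. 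The divergence-free condition is a closed linear constraint, hence preserved under the weak limit, so $\mathbf{u}^{*}\in \mathbf{H}_{\text{div}}$.

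Then I would pass to the limit term by term. Strong $L^{2}$ convergence together with $\rho_{0},h,|\nabla f|^{2}\in L^{\infty}(\Omega)$ yields
\begin{align*}
J(\mathbf{u}^{*})=\lim_{n\to\infty}J(\mathbf{u}_{n})=1,\qquad
E_{2}(\mathbf{u}^{*})=\lim_{n\to\infty}E_{2}(\mathbf{u}_{n}),
\end{align*}
so $\mathbf{u}^{*}\in\mathcal{A}$. Weak lower semicontinuity of the Dirichlet norm gives $E_{1}(\mathbf{u}^{*})\le \liminf_{n}E_{1}(\mathbf{u}_{n})$, whence
\begin{align*}
E(\mathbf{u}^{*})=-s\mu E_{1}(\mathbf{u}^{*})+E_{2}(\mathbf{u}^{*})\ge \limsup_{n\to\infty}\bigl(-s\mu E_{1}(\mathbf{u}_{n})+E_{2}(\mathbf{u}_{n})\bigr)=\Phi(s).
\end{align*}
Combined with $\mathbf{u}^{*}\in\mathcal{A}$, this forces $E(\mathbf{u}^{*})=\Phi(s)$, so the supremum is attained.

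The only subtle point is ensuring compactness in the direction where $E_{2}$ lives: because $h$ is sign-indefinite, one cannot conclude strong convergence from $E_{2}$ alone. The penalty term $-s\mu E_{1}$ is what rescues coercivity, producing the $H^{1}$ bound that drives the Rellich step. I expect this interplay between the positive Dirichlet term and the indefinite potential term $E_{2}$ to be the main (though rather standard) obstacle; everything else is bookkeeping with dominated convergence.
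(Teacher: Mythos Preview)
Your proof is correct and follows essentially the same direct-method approach as the paper: bound $E$ above on $\mathcal{A}$, extract a maximizing sequence that is bounded in $\mathbf{H}_{\text{div}}$, pass to a weak $H^1$/strong $L^2$ limit, and use weak lower semicontinuity of $E_1$ together with strong $L^2$ convergence of $J$ and $E_2$ to conclude. Your write-up is in fact more explicit than the paper's (you spell out the $H^1$ bound via $s\mu E_1(\mathbf{u}_n)=E_2(\mathbf{u}_n)-E(\mathbf{u}_n)$ and verify $J(\mathbf{u}^*)=1$ directly), but the logical skeleton is identical.
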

\begin{proof}
For any \(\mathbf{u}\in\mathcal{A}\), we have 
\begin{align}\label{zuidazhi1220}
E\left(\mathbf{u}\right)
\leq \int_{\Omega}h\left|\mathbf{u}\cdot\nabla f\right|^2dxdy=\int_{\Omega}\frac{h}{\rho_{0}}\rho_{0}\left|\mathbf{u}\cdot \nabla f\right|^2 dxdy\leq \left\|\frac{h}{\rho_{0}}\left|\nabla f\right|^2\right\|_{L^{\infty}\left(\Omega\right)},
\end{align}
which indicates the existence of an upper-bound of \(E\left(\mathbf{u}\right)\) on \(\mathcal{A}\). Thus, there exists a sequence \(\left\{\mathbf{u}_{n}\right\}\subset \mathcal{A}\) such that 
\[
\lim\limits_{n\rightarrow+\infty}
E\left(\mathbf{u}_{n}\right)=\sup\limits_{\mathbf{u}\in\mathcal{A}}E\left(\mathbf{u}\right).
\]
Consequently, \(\left\{\mathbf{u}_{n}\right\}\) is bounded in \(\mathbf{H}_{\text{div}}\). By the reflexivity, there exists a \(\mathbf{u}_{0}\in \mathbf{H}_{\text{div}}\) such that 
\[
\mathbf{u}_{n}\rightarrow \mathbf{u}_{0}~\text{weakly~in~}\mathbf{H}_{\text{div}}~\text{and~strongly~in}~\left[L^{2}\left(\Omega\right)\right]^2.
\]
Since it is easy to verify that \(E\left(\mathbf{u}\right)\) is weak upper semi-continuous by convexity and strong convergence, we have
\begin{align*}
E\left(\mathbf{u}_{0}\right)\geq\lim\limits_{n\rightarrow+\infty}E\left(\mathbf{u}_{n}\right)=\sup\limits_{\mathbf{u}\in\mathcal{A}}E\left(u\right),
\end{align*}
and \(\mathbf{u}_{0}\in\mathcal{A}\). Therefore,  
\[
E\left(\mathbf{u}_{0}\right)=\sup\limits_{\mathbf{u}\in\mathcal{A}}E\left(\mathbf{u}\right).
\]
\end{proof}

\begin{proposition}\label{jiujiu0204} If for some \(\Lambda>0\), \(\Lambda^2 J\left(\mathbf{u}_{0}\right)=\sup\limits_{\mathbf{u}\in\mathcal{A}}E\left(\mathbf{u},\Lambda\right)=E\left(\mathbf{u}_{0},\Lambda\right)\), then \(\mathbf{u}_{0}\) solves the equation \eqref{wanzhe1219} as \(\lambda=\Lambda\) and \(\mathbf{u}_{0}\) is smooth.
\end{proposition}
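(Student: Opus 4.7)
The plan is to treat $\mathbf{u}_0$ as a constrained critical point and read off the Euler--Lagrange equation, with the Lagrange multiplier forced to equal $\Lambda^2$ by the hypothesis. The hypothesis $\Lambda^2 J(\mathbf{u}_0) = E(\mathbf{u}_0,\Lambda) = \sup_{\mathcal{A}} E(\cdot,\Lambda)$ says precisely that the constrained maximum value of $E(\cdot,\Lambda)$ on $\{J = 1\}$ equals $\Lambda^2$, so the Lagrange multiplier associated with the constraint $J(\mathbf{u}) = 1$ is $\Lambda^2$. To make this rigorous I would consider, for any $\mathbf{w}\in\mathbf{H}_{\mathrm{div}}$, the one-parameter family
\[
\mathbf{u}_\epsilon := \frac{\mathbf{u}_0 + \epsilon\mathbf{w}}{\sqrt{J(\mathbf{u}_0 + \epsilon\mathbf{w})}}\in\mathcal{A},
\]
which is well defined for $|\epsilon|$ small since $J$ is strictly positive near $\mathbf{u}_0$, and then differentiate $\epsilon\mapsto E(\mathbf{u}_\epsilon,\Lambda)$ at $\epsilon=0$. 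Setting the derivative to zero and using $E(\mathbf{u}_0,\Lambda) = \Lambda^2 = \Lambda^2 J(\mathbf{u}_0)$, a direct computation yields the weak identity
\[
-\Lambda\mu\int_\Omega \nabla\mathbf{u}_0:\nabla\mathbf{w}\,dxdy + \int_\Omega h\,(\mathbf{u}_0\cdot\nabla f)(\mathbf{w}\cdot\nabla f)\,dxdy = \Lambda^2\int_\Omega \rho_0\mathbf{u}_0\cdot\mathbf{w}\,dxdy
\]
for every divergence-free test field $\mathbf{w}\in\mathbf{H}_{\mathrm{div}}$.

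Next I would recover the pressure. The identity above says that the distribution
\[
\mathbf{F} := \Lambda\mu\Delta\mathbf{u}_0 + h(\mathbf{u}_0\cdot\nabla f)\nabla f - \Lambda^2\rho_0\mathbf{u}_0
\]
annihilates every divergence-free $H^1_0$ vector field. By de Rham's theorem (equivalently, the standard Stokes/pressure recovery lemma on a bounded smooth domain), there exists a scalar $\pi\in L^2(\Omega)$, unique up to an additive constant, such that $\mathbf{F} = \Lambda\nabla\pi$ in the distributional sense. Rearranging gives exactly equation \eqref{wanzhe1219} with $\lambda = \Lambda$, together with the boundary and incompressibility conditions that are already built into $\mathbf{H}_{\mathrm{div}}$.

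For the smoothness of $\mathbf{u}_0$, I would run a bootstrap using elliptic (Stokes) regularity. Rewriting the PDE as a steady Stokes system
\[
-\mu\Delta\mathbf{u}_0 + \nabla\pi = \frac{1}{\Lambda}\bigl[h(\mathbf{u}_0\cdot\nabla f)\nabla f - \Lambda^2\rho_0\mathbf{u}_0\bigr],\qquad \nabla\cdot\mathbf{u}_0 = 0,\quad \mathbf{u}_0|_{\partial\Omega} = 0,
\]
the right-hand side lies in $L^2(\Omega)$ since $\mathbf{u}_0\in H^1_0$ and the coefficients $h$, $\nabla f$, $\rho_0$ are smooth and bounded, so Cattabriga--Solonnikov estimates give $\mathbf{u}_0\in H^2(\Omega)^2$ and $\pi\in H^1(\Omega)$. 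Iterating, each gain of a derivative in $\mathbf{u}_0$ improves the right-hand side accordingly, and since $\Omega$ has smooth boundary and the coefficients are smooth, the bootstrap continues indefinitely to yield $\mathbf{u}_0\in C^\infty(\overline{\Omega})^2$ (and $\pi\in C^\infty$ up to the additive constant).

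The main obstacle I foresee is the first step: justifying the normalized variation and cleanly extracting the Euler--Lagrange identity with the multiplier identified as $\Lambda^2$ rather than an abstract Lagrange constant. One must be careful that $\mathbf{w}$ is \emph{divergence-free} and that the differentiation of $\epsilon\mapsto J(\mathbf{u}_0+\epsilon\mathbf{w})^{-1/2}$ is handled correctly; the cancellations that eliminate the $J$-derivative and leave precisely the factor $\Lambda^2$ rely on the hypothesis $E(\mathbf{u}_0,\Lambda) = \Lambda^2 J(\mathbf{u}_0)$. Once this identity is in hand, the pressure recovery and regularity are standard.
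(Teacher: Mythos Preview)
Your proposal is correct and follows essentially the same approach as the paper: derive the Euler--Lagrange identity on $\mathbf{H}_{\mathrm{div}}$ with the multiplier forced to $\Lambda^2$ by the hypothesis, then bootstrap via Stokes regularity. The only cosmetic difference is that the paper parametrizes the constraint set $\mathcal{A}$ via the implicit function theorem (perturbing along $\eta\mathbf{w}+\widetilde{\sigma}(\eta)\mathbf{u}_0$) rather than your direct normalization $\mathbf{u}_\epsilon=(\mathbf{u}_0+\epsilon\mathbf{w})/\sqrt{J(\mathbf{u}_0+\epsilon\mathbf{w})}$; both yield the same weak identity, and your explicit invocation of de Rham for the pressure is something the paper leaves implicit.
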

\begin{proof}
\(\forall~\mathbf{w}\in \mathbf{H}_{\text{div}}\), \(\eta\) and \(\widetilde{\sigma}\in\mathbf{R}\), define 
\[
F\left(\eta,\widetilde{\sigma}\right)
=J\left(\mathbf{u}_{0}+\eta\mathbf{w}/\sqrt{J\left(\mathbf{w}\right)}+\widetilde{\sigma} \mathbf{u}_{0}\right).
\]
Then, 
\[
F\left(0,0\right)=1,~\partial_{\eta}F\left(0,0\right)=2\int_{\Omega}\rho_{0}\mathbf{u}_{0}\cdot\mathbf{w}/\sqrt{J\left(\mathbf{w}\right)}dxdy,~\partial_{\widetilde{\sigma}}F\left(0,0\right)=2\neq 0.
\]
As a result, by the implicit function theorem, there exists a smooth curve \(\widetilde{\sigma}=\widetilde{\sigma}\left(\eta\right)\) defined in the neighborhood of \(\eta=0\) such that \begin{align}\label{toutou1221}
0=\widetilde{\sigma}\left(0\right),~
\widetilde{\sigma}'\left(0\right)=-\int_{\Omega}\rho_{0}\mathbf{u}_{0}\cdot\mathbf{w}/\sqrt{J\left(\mathbf{w}\right)}dxdy.
\end{align}
Subsequently, we define 

\[
\widetilde{F}\left(\eta\right) 
=E\left(\mathbf{u}_{0}+\eta \mathbf{w}/\sqrt{J\left(\mathbf{w}\right)}+\widetilde{\sigma}\left(\eta\right)\mathbf{u}_{0},\Lambda\right).
\]
Since \(\mathbf{u}_{0}\) is the extreme value point, we have 
\begin{align*}
\begin{aligned}
0=\frac{d\widetilde{F}}{d\eta}|_{\eta=0}=&-2\Lambda\mu\int_{\Omega}\nabla\mathbf{u}_{0}\cdot\nabla \left(\mathbf{w}/J\left(\mathbf{w}\right)\right)dxdy+2\int_{\Omega}h\mathbf{u}_{0}\cdot\nabla f\left(\mathbf{w}/\sqrt{J\left(\mathbf{w}\right)}\cdot\nabla f\right)dxdy
\\
&-2\widetilde{\sigma}'\left(0\right)\left[\Lambda \mu \int_{\Omega}\left|\nabla\mathbf{u}_{0}\right|^2dxdy-\int_{\Omega}h\left|\mathbf{u}_{0}\cdot\nabla f\right|^2dxdy\right],
\end{aligned}
\end{align*}
implying that 
\[
-2\Lambda\mu\int_{\Omega}\nabla\mathbf{u}_{0}\cdot\nabla \mathbf{w}dxdy+2\int_{\Omega}h\mathbf{u}_{0}\cdot\nabla f\left(\mathbf{w}\cdot\nabla f\right)dxdy=\Lambda^2\int_{\Omega}\rho_{0}\mathbf{u}_{0}\cdot\mathbf{w}dxdy.
\]
This verifies that \(\mathbf{u}_{0}\) is the weak solution to the system \eqref{wanzhe1219} as \(\lambda=\Lambda\). Furthermore, since \(\partial\Omega\), \(f\) and \(h\) are smooth, then by the standard bootstrap method, we have \(\mathbf{u}_{0}\in \left[H^{k}\left(\Omega\right)\right]^2\) for any \(k\in\mathbf{N}^{*}\). Thus, \(\mathbf{u}_{0}\) is smooth.
\end{proof}
\begin{remark}\label{buweiling0202}
Since \(\mathbf{u}_{0}=\left(u_{01},u_{02}\right)\in \mathbf{H}_{\text{div}}\) and \(\rho_{0}>0\), then
\(\left\|u_{01}\right\|_{L^{2}\left(\Omega\right)}\left\|u_{02}\right\|_{L^2\left(\Omega\right)}\neq 0\). Indeed, assume that \(u_{01}\equiv 0\),  then \(\int_{\Omega}\rho_{0}\mathbf{u}_{0}^2dxdy>0\) means \(u_{02}\neq 0\). However, from 
the condition 
\(\partial_{x}u_{01}+\partial_{y}u_{02}=0\) and \(\mathbf{u}_{0}|_{\partial\Omega}=\mathbf{0}\), we can deduce that \(u_{02}\equiv 0\), which is a contradiction. And let \(\theta_{0}=-\frac{\mathbf{u}_{0}\cdot\nabla\rho_{0}}{\Lambda}\), then \(\left\|\theta_{0}\right\|_{L^2\left(\Omega\right)}\neq 0\) and \(\theta_{0}\) is smooth. Then,
\(\left(\mathbf{u}_{0},\theta_{0}\right)\) solves equations \eqref{jixu1219} at \(\lambda=\Lambda\).
\end{remark}

\begin{proposition}\label{dayuling1220}
    Under the conditions \eqref{xuyao0207} and \(h\left(x_{0},y_{0}\right)>0\), there exists a positive constant \(M>0\) such that \(\left|\Phi\left(s\right)\right|\leq M\) for \(s>0\). Moreover, for any sufficiently small \(s>0\), 
    \(0<\Phi\left(s\right)\leq M\).
\end{proposition}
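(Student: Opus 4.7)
The plan is to split the statement into its two parts---a uniform upper bound $\Phi(s)\leq M$ and strict positivity $\Phi(s)>0$ for sufficiently small $s$---and to treat each by a direct estimate on $E(\mathbf{u},s)$. The upper bound is immediate from the argument already used in Proposition \ref{pingfan1220}: since $-s\mu E_1(\mathbf{u})\leq 0$ and $J(\mathbf{u})=1$ on $\mathcal{A}$,
\[
E(\mathbf{u},s)\leq E_2(\mathbf{u})=\int_\Omega\frac{h}{\rho_0}\rho_0\left|\mathbf{u}\cdot\nabla f\right|^2dxdy\leq \left\|\frac{h}{\rho_0}|\nabla f|^2\right\|_{L^\infty(\Omega)},
\]
so setting $M=\left\|(h/\rho_0)|\nabla f|^2\right\|_{L^\infty(\Omega)}$, or the same quantity with $|h|$ in place of $h$ if a two-sided bound is desired, yields $\Phi(s)\leq M$ uniformly in $s>0$.

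For the strict positivity I plan to exhibit a single admissible test vector $\mathbf{u}^*\in\mathcal{A}$ with $E_2(\mathbf{u}^*)>0$. Once such a $\mathbf{u}^*$ is in hand, $\mu E_1(\mathbf{u}^*)$ is a fixed positive number, so
\[
\Phi(s)\geq E(\mathbf{u}^*,s)=E_2(\mathbf{u}^*)-s\mu E_1(\mathbf{u}^*)\geq \tfrac{1}{2}E_2(\mathbf{u}^*)>0
\]
whenever $s<E_2(\mathbf{u}^*)/(2\mu E_1(\mathbf{u}^*))$. To build $\mathbf{u}^*$, continuity of $h$ supplies a ball $B\subset\Omega$ on which $h\geq h(x_0,y_0)/2$; I then take $\mathbf{u}^*=\nabla^\perp(\chi\psi)$ for a smooth cutoff $\chi$ compactly supported in $B$ and a stream function $\psi$ chosen so that $\nabla^\perp\psi\cdot\nabla f$ does not vanish identically on the support of $\chi$. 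By construction $\mathbf{u}^*\in\mathbf{H}_{\text{div}}$; after rescaling it belongs to $\mathcal{A}$, and the strict positivity of $h$ on the support of $\mathbf{u}^*$ forces $E_2(\mathbf{u}^*)>0$.

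The main obstacle will be this test-function construction, because the integrand $h|\mathbf{u}\cdot\nabla f|^2$ automatically vanishes wherever $\nabla f$ does; a careless localization around $(x_0,y_0)$ could yield $E_2(\mathbf{u}^*)=0$ if $\nabla f\equiv 0$ on all of $B$. The relation $\nabla\rho_0=h\nabla f$ handles this point in the essentially unique way: on any open set on which $h>0$, a vanishing $\nabla f$ forces $\rho_0$ to be locally constant, in which case the value of $h$ there is unconstrained by \eqref{wending0202}, and one may replace $(x_0,y_0)$ by a nearby point at which both $\nabla f\neq 0$ and $h$ is still positive. With a nondegenerate base point so chosen, the stream-function selection above produces $\mathbf{u}^*\cdot\nabla f\not\equiv 0$, and hence $E_2(\mathbf{u}^*)>0$, completing both halves of the proof.
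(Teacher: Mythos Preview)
Your treatment of the upper bound $\Phi(s)\leq M$ and of the positivity $\Phi(s)>0$ for small $s$ follows the paper's route: bound $E_2$ on $\mathcal{A}$ by $\left\|(h/\rho_0)|\nabla f|^2\right\|_{L^\infty(\Omega)}$, then exhibit a compactly supported test field where $h>0$. Your stream-function construction $\mathbf{u}^*=\nabla^\perp(\chi\psi)$ and your discussion of the possible degeneracy of $\nabla f$ are more careful than the paper, which simply asserts that some $\mathbf{u}\in C_0^\infty(B_\sigma)$ with $E(\mathbf{u})>0$ exists without explicitly addressing either the divergence-free constraint or the nonvanishing of $\nabla f$.

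There is, however, a gap shared by your proposal and the paper concerning the two-sided bound $|\Phi(s)|\leq M$. Your parenthetical (replace $h$ by $|h|$) controls $|E_2|$ but not $|E(\cdot,s)|$, since $-s\mu E_1(\mathbf{u})$ is unbounded below in $s$. In fact no uniform lower bound on $\Phi$ can hold: Poincar\'e's inequality together with the normalization $J(\mathbf{u})=1$ forces $E_1(\mathbf{u})\geq c_0>0$ on $\mathcal{A}$, hence $\Phi(s)\leq -s\mu c_0+M\to-\infty$ as $s\to\infty$. The paper's contradiction argument (``$\Phi$ unbounded $\Rightarrow$ some value equals $-\infty$'') is equally invalid. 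Fortunately this defect is harmless downstream: Proposition~\ref{xiangdeng1221} uses only $\Phi(s)\leq M$ and $\Phi(s)>0$ for small $s$, and the local-Lipschitz step in Proposition~\ref{zizhuo1221} needs only a lower bound on compact subintervals $[a,b]\subset(0,\infty)$, which follows at once from $\Phi(s)\geq E(\mathbf{u}^*,s)\geq -b\mu E_1(\mathbf{u}^*)+E_2(\mathbf{u}^*)$ for any fixed $\mathbf{u}^*\in\mathcal{A}$.
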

\begin{proof}
First, according to Proposition \ref{pingfan1220}, \(\Phi\left(s\right)\) is well-defined. Then for any \(s>0\), we assume that 
\(\Phi\left(s\right)\) is unbounded. From \eqref{zuidazhi1220}, we have 
    \begin{align*}
    E\left(\mathbf{u}\right)\leq \left\|\frac{h}{\rho_{0}}\left|\nabla f\right|^2\right\|_{L^{\infty}\left(\Omega\right)}.
    \end{align*}
    Thus, there exists a \(s>0\) such that \(\Phi\left(s\right)=-\infty\) implying that \(E\left(\mathbf{u}\right)=-\infty\) for any \(\mathbf{u}\in\mathcal{A}\), which is impossible obviously. It follows that there exists a positive constant \(M>0\) such that 
    \begin{align}\label{houmian1221}
    \left|\Phi\left(s\right)\right|\leq M.
    \end{align}
    
    From the condition \(h\left(x_{0},y_{0}\right)>0\), there exists a \(\sigma>0\) such that 
    \(\forall~\left(x,y\right)\in B_{\sigma}\left(\left(x_{0},y_{0}\right)\right)\), we have 
    \[
    h\left(x,y\right)>0.
    \]
Thus, for any \(s>0\) small enough, we can choose \(\mathbf{u}\in C_{0}^{\infty}\left(B_{\sigma}\left(\left(x_{0},y_{0}\right)\right)\right)\) such that 
\[E\left(\mathbf{u}\right)>0.\]
Consequently, \(\Phi\left(s\right)>0\). This completes the proof.
\end{proof}

\begin{proposition}\label{zizhuo1221}
    \(\Phi\left(s\right)\) is decreasing in \(\left(0,+\infty\right)\) and \(\Phi\left(s\right)\in C_{\text{loc}}^{0,1}\left(0,+\infty\right)\).
\end{proposition}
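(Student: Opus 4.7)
The plan is to treat the two assertions separately, in both cases exploiting the specific structure
$E(\mathbf{u},s)=-s\mu E_{1}(\mathbf{u})+E_{2}(\mathbf{u})$, which is affine (in fact, linear and decreasing) in $s$ for each fixed admissible $\mathbf{u}$. Since $\Phi(s)$ is the supremum of this family of affine functions of $s$, general principles already indicate that $\Phi$ must be convex and non-increasing; the task is to upgrade to strict monotonicity and local Lipschitz continuity and to justify the needed compactness.

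\textbf{Monotonicity.} Let $0<s_{1}<s_{2}$ and let $\mathbf{u}_{s_{2}}\in\mathcal{A}$ be a maximizer realizing $\Phi(s_{2})$, whose existence is guaranteed by Proposition \ref{pingfan1220}. I would first show that $E_{1}(\mathbf{u}_{s_{2}})>0$: since $\mathbf{u}_{s_{2}}\in\mathcal{A}$ forces $J(\mathbf{u}_{s_{2}})=1$ and $\rho_{0}>0$, we have $\mathbf{u}_{s_{2}}\not\equiv\mathbf{0}$, so by Lemma \ref{poincarebudengshi0201} (Poincaré) applied in $\mathbf{H}_{\text{div}}$, $\|\nabla\mathbf{u}_{s_{2}}\|_{L^{2}(\Omega)}^{2}>0$. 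Then
\[
\Phi(s_{1})\geq E(\mathbf{u}_{s_{2}},s_{1})
=E(\mathbf{u}_{s_{2}},s_{2})+(s_{2}-s_{1})\mu E_{1}(\mathbf{u}_{s_{2}})
>\Phi(s_{2}),
\]
which yields strict monotonicity.

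\textbf{Local Lipschitz continuity.} For any fixed compact $[a,b]\subset(0,+\infty)$, the key step is a uniform bound $E_{1}(\mathbf{u}_{s})\leq C(a,b)$ for every maximizer $\mathbf{u}_{s}$ with $s\in[a,b]$. The bound on $E_{2}$ from the proof of Proposition \ref{pingfan1220},
$E_{2}(\mathbf{u})\leq\|h|\nabla f|^{2}/\rho_{0}\|_{L^{\infty}(\Omega)}=:N$,
combined with $\Phi(s)\geq -M$ from Proposition \ref{dayuling1220}, gives
\[
s\mu E_{1}(\mathbf{u}_{s})=E_{2}(\mathbf{u}_{s})-\Phi(s)\leq N+M,
\]
so $E_{1}(\mathbf{u}_{s})\leq (N+M)/(a\mu)$ for $s\in[a,b]$. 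Using the maximizers $\mathbf{u}_{s_{1}},\mathbf{u}_{s_{2}}$ as test functions interchangeably yields the standard two-sided estimate
\[
(s_{2}-s_{1})\mu E_{1}(\mathbf{u}_{s_{2}})\leq \Phi(s_{1})-\Phi(s_{2})\leq (s_{2}-s_{1})\mu E_{1}(\mathbf{u}_{s_{1}}),
\]
from which $|\Phi(s_{1})-\Phi(s_{2})|\leq (N+M)/a\,|s_{1}-s_{2}|$ for $s_{1},s_{2}\in[a,b]$. This is exactly $\Phi\in C_{\text{loc}}^{0,1}(0,+\infty)$.

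\textbf{Expected main obstacle.} The monotonicity step is essentially immediate once one observes the strict positivity of $E_{1}$ on $\mathcal{A}$. The real work is the uniform $E_{1}$ bound on compact subsets of $(0,+\infty)$: a pointwise upper bound for $E_{2}$ alone is not enough, one must exploit both $E_{2}\leq N$ and $\Phi\geq -M$ to beat back $E_{1}$, and the constant blows up as $s\downarrow 0$, which is why only local (not global) Lipschitz continuity is obtained. Nothing extra is needed for convexity, but I would note in passing that $\Phi$, being a supremum of affine functions of $s$, is automatically convex; this provides a sanity check but is not used in the statement.
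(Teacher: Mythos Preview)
Your proposal is correct and follows essentially the same approach as the paper: both arguments exploit the affine dependence of $E(\mathbf{u},s)$ on $s$, use the maximizer at one point as a test function at the other to get monotonicity, and derive the local Lipschitz bound from the uniform control $s\mu E_{1}(\mathbf{u}_{s})\leq M+\|h|\nabla f|^{2}/\rho_{0}\|_{L^{\infty}}$ coming from Proposition~\ref{dayuling1220}. Your additional observation that $E_{1}(\mathbf{u}_{s})>0$ via Poincar\'e yields \emph{strict} monotonicity is a small refinement the paper does not make explicit, and your convexity remark is a harmless aside.
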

\begin{proof}
\(\forall~0\leq s_{1}\leq s_{2}\), there exist \(\mathbf{u}_{1}\) and \(\mathbf{u}_{2}\) in \(\mathbf{H}_{\text{div}}\) such that 
\begin{align}\label{xinwuwaiwu1221}
\begin{aligned}
\Phi\left(s_{1}\right)-&\Phi\left(s_{2}\right)
=\frac{E\left(\mathbf{u}_{1},s_{1}\right)}{J\left(\mathbf{u}_{1}\right)}-\frac{E\left(\mathbf{u}_{2},s_{2}\right)}{J\left(\mathbf{u}_{2}\right)}
\\
&\geq \frac{E\left(\mathbf{u}_{2},s_{1}\right)-E\left(\mathbf{u}_{2},s_{2}\right)}{J\left(\mathbf{u}_{2}\right)}
=\frac{\left(s_{2}-s_{1}\right)\mu\int_{\Omega}\left|\nabla\mathbf{u}_{2}\right|^2dxdy}{J\left(\mathbf{u}_{2}\right)}\geq 0.
\end{aligned}
\end{align}
Thus, \(\Phi\left(s\right)\) is decreasing in \(\left(0,+\infty\right)\). 

From the Proposition \ref{dayuling1220}, for any \(s>0\), we can conclude that 
\[
\frac{s\mu E_{1}\left(\mathbf{u}\right) }{J\left(\mathbf{u}\right)}\leq M+\left\|\frac{h}{\rho_{0}}\left|\nabla f\right|^2\right\|_{L^{\infty}\left(\Omega\right)}:=\widetilde{M}.
\]
Consequently, for any \(s_{1}\leq s_{2}\) and \(s_{1},s_{2}\in [a,b]\), where \(0<a<b\) are fixed constants, we can deduce that 
by \eqref{xinwuwaiwu1221} 
\begin{align}\label{jiehe1221}
0\leq \Phi\left(s_{1}\right)-\Phi\left(s_{2}\right)\leq  \frac{\left(s_{2}-s_{1}\right)\mu\int_{\Omega}\left|\nabla\mathbf{u}_{1}\right|^2dxdy}{J\left(\mathbf{u}_{1}\right)}\leq 
\frac{\widetilde{M}\left(s_{2}-s_{1}\right)}{a},
\end{align}
which shows that \(\Phi\left(s\right)\in C_{\text{loc}}^{0,1}\left(0,+\infty\right)\). The proof is completed.
\end{proof}

\begin{proposition}\label{xiangdeng1221}
    There exists a unique \(\Lambda>0\) such that \(\Lambda^2=\Phi\left(\Lambda\right)\).
\end{proposition}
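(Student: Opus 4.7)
The plan is to reduce the fixed-point equation $\Lambda^2=\Phi(\Lambda)$ to a root-finding problem for the auxiliary function
\[
G(s):=s^2-\Phi(s),\qquad s\in(0,+\infty),
\]
and then apply the intermediate value theorem for existence together with a strict-monotonicity argument for uniqueness. Proposition \ref{zizhuo1221} already guarantees that $\Phi\in C^{0,1}_{\mathrm{loc}}(0,+\infty)$, so $G$ is continuous on $(0,+\infty)$, which is the regularity we need for IVT.

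For existence, I would first exhibit a point where $G$ is negative and a point where $G$ is positive. For the negative side, Proposition \ref{dayuling1220} tells us that $\Phi(s)>0$ for all sufficiently small $s>0$; since $s^2\to 0$ as $s\to 0^+$ while $\Phi(s)$ stays strictly positive, we have $G(s)<0$ for all small enough $s$. For the positive side, Proposition \ref{dayuling1220} also provides the uniform bound $|\Phi(s)|\leq M$ on $(0,+\infty)$, so taking $s_\ast>\sqrt{M+1}$ we obtain $G(s_\ast)\geq s_\ast^2-M>0$. The IVT then yields some $\Lambda\in(0,s_\ast)$ with $G(\Lambda)=0$, i.e., $\Lambda^2=\Phi(\Lambda)$, and by construction $\Lambda>0$.

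For uniqueness, I would use that $s\mapsto s^2$ is strictly increasing on $(0,+\infty)$ while $\Phi$ is decreasing on $(0,+\infty)$ by Proposition \ref{zizhuo1221}; hence $G(s)=s^2-\Phi(s)$ is the sum of a strictly increasing function and a non-decreasing function, so $G$ is strictly increasing on $(0,+\infty)$. A strictly increasing continuous function has at most one zero, which gives uniqueness of $\Lambda$.

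I do not anticipate a serious obstacle here: all the hard analytic work is already encoded in Propositions \ref{pingfan1220}--\ref{zizhuo1221} (existence of the maximizer, uniform upper bound on $\Phi$, positivity for small $s$, monotonicity, and local Lipschitz regularity). The only minor care point is to make sure the ``small $s$'' regime in Proposition \ref{dayuling1220} is nonempty and that the positivity there is strict, both of which are explicit in the statement. Once those two sign facts are recorded, existence by IVT and uniqueness by strict monotonicity finish the proof in a few lines.
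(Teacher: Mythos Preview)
Your proposal is correct and follows essentially the same approach as the paper: define the auxiliary function $G(s)=s^2-\Phi(s)$ (the paper calls it $H$), use Propositions \ref{dayuling1220} and \ref{zizhuo1221} to get a sign change and strict monotonicity, and conclude via the intermediate value theorem. Your write-up is in fact more explicit than the paper's, which simply asserts that $H$ is increasing with $\lim_{\lambda\to 0^+}H(\lambda)<0$ and $\lim_{\lambda\to+\infty}H(\lambda)>0$ without spelling out the details you provide.
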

\begin{proof}
Define \(H\left(\lambda\right)=\lambda^2-\Phi\left(\lambda\right)\). From the Propositions \ref{dayuling1220} and \ref{zizhuo1221}, we can show that \(H\left(\lambda\right)\) is increasing on \((0,+\infty)\), \(\lim\limits_{\lambda\rightarrow 0^{+}}H\left(\lambda\right)<0\) and \(\lim\limits_{\lambda\rightarrow+\infty}H\left(\lambda\right)>0\). Thus, by the intermediate value theorem for continuous function, we can conclude that there exists a unique \(\Lambda>0\) such that 
\(\Lambda^2=\Phi\left(\Lambda\right)\). The proof is finished.
\end{proof}
\begin{remark}\label{xuyaode0210}
From the definitions of \(\Phi\left(s\right)\) and \(\Lambda\), one can easily obtain 
\begin{align*}
\begin{aligned}
\Lambda^2 \left\|\sqrt{\rho_{0}}\mathbf{u}\right\|_{L^2\left(\Omega\right)}^2 
\geq -\Lambda\mu\left\|\nabla\mathbf{u}\right\|_{L^2\left(\Omega\right)}^2+\int_{\Omega}h\left|\mathbf{u}\cdot\nabla f\right|^2dxdy,~\text{for~any~}\mathbf{u}\in \mathbf{H}_{\text{div}}.
\end{aligned}
\end{align*}
And from Remark \ref{buweiling0202}, we also have 
\begin{align*}
\Lambda^2 \left\|\sqrt{\rho_{0}}\mathbf{u}_{0}\right\|_{L^2\left(\Omega\right)}^2 
= -\Lambda\mu\left\|\nabla\mathbf{u}_{0}\right\|_{L^2\left(\Omega\right)}^2+\int_{\Omega}h\left|\mathbf{u}_{0}\cdot\nabla f\right|^2dxdy.
\end{align*}
\end{remark}


\subsection{Proof of the Theorem \ref{lipuxizhi0202}}\label{feixianxing0203}
In this part, we prove Theorem  \ref{lipuxizhi0202}. As introduced in the Introduction, the proof will be divided into several parts. First, we consider the uniqueness of the solution to the linear system \eqref{xianxing1219} and obtain the following conclusion.
\begin{lemma}\label{weiyixing1230}
The linearized system \eqref{xianxing1219} has a unique weak solution when the initial value \(\left(\mathbf{u}_{0},\theta_{0}\right)\in \left(\left[H^2\left(\Omega\right)\right]^2\cap \mathbf{H}_{div}\right)\times \left(H^{1}\left(\Omega\right)\cap L^{\infty}\left(\Omega\right)\right)\).
\end{lemma}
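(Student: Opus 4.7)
The plan is to establish uniqueness by a straightforward energy argument, exploiting the linearity of \eqref{xianxing1219} and the coercivity provided by the strict positivity \(\rho_0 > 0\) from \eqref{xuyao0207}. First I would suppose that \((\mathbf{V}_1,\rho_1,P_1)\) and \((\mathbf{V}_2,\rho_2,P_2)\) are two weak solutions corresponding to the same initial data \((\mathbf{u}_0,\theta_0)\) and form the difference \((\mathbf{V},\rho,P):=(\mathbf{V}_1-\mathbf{V}_2,\rho_1-\rho_2,P_1-P_2)\). By linearity this difference solves \eqref{xianxing1219} with zero initial data, so it suffices to show \(\mathbf{V}\equiv\mathbf{0}\) and \(\rho\equiv 0\).

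Next I would test the momentum equation against \(\mathbf{V}\) itself. Because \(\nabla\cdot\mathbf{V}=0\) and \(\mathbf{V}|_{\partial\Omega}=\mathbf{0}\), integration by parts kills the pressure term, yielding
\begin{align*}
\tfrac{1}{2}\frac{d}{dt}\int_\Omega \rho_0 |\mathbf{V}|^2\,dxdy + \mu\int_\Omega |\nabla\mathbf{V}|^2\,dxdy = -\int_\Omega \rho\,\nabla f\cdot \mathbf{V}\,dxdy.
\end{align*}
Parallelly, testing the density equation \(\rho_t=-h\mathbf{V}\cdot\nabla f\) against \(\rho\) gives
\begin{align*}
\tfrac{1}{2}\frac{d}{dt}\int_\Omega \rho^2\,dxdy = -\int_\Omega h\,(\mathbf{V}\cdot\nabla f)\,\rho\,dxdy.
\end{align*}
Adding these two identities, and bounding the right-hand sides by Cauchy--Schwarz together with the smoothness and boundedness of \(h\), \(\nabla f\) and \(\rho_0\) (with \(\rho_0\geq c>0\) from \eqref{xuyao0207}), I obtain
\begin{align*}
\frac{d}{dt}\Bigl(\|\sqrt{\rho_0}\mathbf{V}\|_{L^2}^2+\|\rho\|_{L^2}^2\Bigr) + 2\mu\|\nabla\mathbf{V}\|_{L^2}^2 \leq C\Bigl(\|\sqrt{\rho_0}\mathbf{V}\|_{L^2}^2+\|\rho\|_{L^2}^2\Bigr),
\end{align*}
with \(C=C(\rho_0,h,\nabla f)\) independent of \(t\). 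Gr\"onwall's inequality, combined with the vanishing initial data, then forces \(\|\sqrt{\rho_0}\mathbf{V}\|_{L^2}^2+\|\rho\|_{L^2}^2\equiv 0\), and the lower bound on \(\rho_0\) converts this into \(\mathbf{V}\equiv\mathbf{0}\), \(\rho\equiv 0\).

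The one point requiring care is the justification of the two testing identities at the level of weak solutions, since using \(\mathbf{V}\) and \(\rho\) themselves as test functions presumes enough regularity in time (i.e.\ that \(\mathbf{V}_t\) pairs admissibly with \(\mathbf{V}\) and \(\rho_t\) with \(\rho\)). I expect this to be the main technical obstacle, and I would handle it by the standard mollification/Friedrichs approximation in time, approximating \(\mathbf{V}\) and \(\rho\) by \(J_\varepsilon \mathbf{V}\) and \(J_\varepsilon \rho\) in the weak formulation, commuting the mollifier with the linear operators (no transport nonlinearity appears, which makes the commutator argument trivial), and then passing \(\varepsilon\to 0\). The assumed regularity of the initial data and the ellipticity in the momentum equation ensure that the energy identities hold rigorously, after which the Gr\"onwall step concludes the proof.
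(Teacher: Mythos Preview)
Your proposal is correct and follows essentially the same energy--Gr\"onwall argument as the paper: both test the density equation against $\rho$ and the momentum equation against $\mathbf{V}$, add, bound the right-hand side by Cauchy's inequality, and conclude via Gr\"onwall from zero initial data. The paper omits the mollification justification you outline, so your version is in fact slightly more careful on that technical point.
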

\begin{proof}
The uniqueness of the solution to the system \eqref{xianxing1219} can be derived from the fact that if the initial value \(\left(\mathbf{u}_{0},\theta_{0}\right)=\left(\mathbf{0},0\right)\), the system \eqref{xianxing1219} has only trivial solution. 

Multiplying \(\eqref{xianxing1219}_{1}\) and \(\eqref{xianxing1219}_{2}\) by \(\rho\) and \(\mathbf{V}\) in $L^2$, respectively,  integrating over \(\Omega\) by parts and adding up the results give that 
\begin{align*}
\begin{aligned}
\frac{d}{dt}\int_{\Omega}\left(\rho^2+\rho_{0}\mathbf{V}^2\right)dxdy 
&=-2\int_{\Omega}\mathbf{V}\cdot\nabla\rho_{0}\rho dxdy-2\mu\left\|\nabla\mathbf{V}\right\|_{L^2\left(\Omega\right)}^2-2\int_{\Omega}\rho\nabla f\cdot\mathbf{V}dxdy
\\
&
\leq C\int_{\Omega}\left(\rho^2+\rho_{0}\mathbf{V}^2\right)dxdy,
\end{aligned}
\end{align*}
where \(C=C\left(\rho_{0},\nabla f\right)>0\) and the Cauchy inequality is used. Then we have 
\[
\frac{d}{dt}\left[e^{-Ct}\int_{\Omega}\left(\rho^2+\rho_{0}\mathbf{V}^2\right)dxdy\right]
\leq 0,
\]
which implies that 
\begin{align*}
\mathbf{V}\equiv\mathbf{0},~\rho\equiv 0,~\text{a.e.~in}~\Omega~\text{and}~\text{for~any~t}\in \left[0,T\right].
\end{align*}
\end{proof}
The following is the nonlinear energy estimate of the strong solution to the nonlinear system \eqref{raodong1219} under small initial values.
\begin{proposition}\label{nengliangguji0102}
    There exists a constant \(\delta\in (0,1)\) such that if \(\left\|\theta_{0}\right\|_{H^1\left(\Omega\right)}^2+\left\|\mathbf{u}_{0}\right\|_{H^2\left(\Omega\right)}^2=\delta_{0}^2<\delta^2\), then any strong solution \(\left(\mathbf{V},P,\rho\right)\) to this system \eqref{raodong1219} with initial value 
    \(\left(\mathbf{u}_{0},\theta_{0}\right)\) satisfies 
    \begin{align}\label{guji0102}
    \begin{aligned}
    \sup\limits_{t\in[0,T]} 
    \bigg{\{}
\left\|\mathbf{V}\right\|_{H^2\left(\Omega\right)}^2
&+\left\|\rho\right\|_{H^1\left(\Omega\right)}^2    +\left\|\nabla P\right\|_{L^2\left(\Omega\right)}^2+
\left\|\left(\mathbf{V}_{t},\rho_{t}\right)\right\|_{L^2\left(\Omega\right)}^2
\\
&+\int_{0}^{t}\left[\left\|\mathbf{V}_{s}\right\|_{H^1\left(\Omega\right)}^2+\left\|\nabla\mathbf{V}\right\|_{H^1\left(\Omega\right)}^2\right]ds
    \bigg{\}}\leq C\delta_{0}^2,
    \end{aligned}
    \end{align}
    where \(\left\|\left(\mathbf{V}_{t},\rho_{t}\right)\right\|_{L^2\left(\Omega\right)}^2:=\left\|\mathbf{V}_{t}\right\|_{L^{2}\left(\Omega\right)}^2+\left\|\rho_{t}\right\|_{L^{2}\left(\Omega\right)}^2\),
    \(T<T_{\text{max}}\) and \(T_{\text{max}}\) is the maximum existence time.
\end{proposition}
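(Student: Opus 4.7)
The plan is to combine a hierarchy of energy estimates for the perturbation system \eqref{raodong1219} with the continuation argument of Lemma \ref{budengshidezhengming0201}. By Lemma \ref{shidingxingwenti0202}, the strong solution exists on a maximal interval $[0,T_{\max})$, and the bound \eqref{guji0102} will be established by controlling a composite functional
\[
\mathcal{E}(t) := \norm{\mathbf{V}}_{H^2}^2 + \norm{\rho}_{H^1}^2 + \norm{\nabla P}_{L^2}^2 + \norm{\mathbf{V}_t}_{L^2}^2 + \norm{\rho_t}_{L^2}^2 + \int_0^t \bigl(\norm{\mathbf{V}_s}_{H^1}^2 + \norm{\nabla\mathbf{V}}_{H^1}^2\bigr)\,ds
\]
uniformly on $[0,T]\subset[0,T_{\max})$. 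The strategy is to postulate a smallness assumption $\mathcal{E}(t)\le A\delta_0$ on $[0,T]$, use it to close the nonlinear contributions, and then invoke Lemma \ref{budengshidezhengming0201} to upgrade the ansatz into a genuine estimate once $\delta$ is chosen small enough. The lower bound $\inf(\rho_0+\theta_0)>\sigma$ transports along the flow and guarantees that the weights $\sqrt{\rho+\rho_0}$ appearing throughout remain comparable to constants on $[0,T]$.

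First I would perform the basic $L^2$ estimate by testing $\eqref{raodong1219}_1$ against $\rho$ and $\eqref{raodong1219}_2$ against $\mathbf{V}$. The key algebraic simplification is the identity $\mathbf{V}\cdot\nabla(\rho+\rho_0)=-\rho_t$, which follows from $\eqref{raodong1219}_1$ together with $\nabla\rho_0=h\nabla f$; it produces a clean cancellation of the convective term $\int(\rho+\rho_0)(\mathbf{V}\cdot\nabla\mathbf{V})\cdot\mathbf{V}$ against part of $\tfrac{d}{dt}\int(\rho+\rho_0)\abs{\mathbf{V}}^2$, leaving
\[
\tfrac{d}{dt}\bigl(\norm{\rho}_{L^2}^2+\norm{\sqrt{\rho+\rho_0}\,\mathbf{V}}_{L^2}^2\bigr) + 2\mu\norm{\nabla\mathbf{V}}_{L^2}^2 \le C\bigl(\norm{\rho}_{L^2}^2+\norm{\mathbf{V}}_{L^2}^2\bigr).
\]
Next, testing $\eqref{raodong1219}_2$ with $\mathbf{V}_t$ generates the dissipation $\tfrac{\mu}{2}\tfrac{d}{dt}\norm{\nabla\mathbf{V}}_{L^2}^2$ and a bound for $\norm{\sqrt{\rho+\rho_0}\,\mathbf{V}_t}_{L^2}^2$; differentiating the momentum equation in $t$ and testing again against $\mathbf{V}_t$ yields $\tfrac{d}{dt}\norm{\sqrt{\rho+\rho_0}\,\mathbf{V}_t}^2+\mu\norm{\nabla\mathbf{V}_t}^2$ controlled by nonlinear remainders involving $\rho_t$, $\mathbf{V}\cdot\nabla\mathbf{V}_t$ and $\mathbf{V}_t\cdot\nabla\mathbf{V}$, with the initial datum $\norm{\mathbf{V}_t(0)}_{L^2}$ extracted by evaluating $\eqref{raodong1219}_2$ at $t=0$ and bounding it by $\norm{\mathbf{u}_0}_{H^2}^2+\norm{\theta_0}_{L^\infty}^2$. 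The density equation, supplemented by Sobolev embedding, supplies $\norm{\rho_t}_{L^2}$ directly and $\norm{\nabla\rho}_{L^2}$ after applying $\nabla$ to $\eqref{raodong1219}_1$. Finally, elliptic regularity for the Stokes problem
\[
-\mu\Delta\mathbf{V}+\nabla P = -(\rho+\rho_0)\mathbf{V}_t-(\rho+\rho_0)\mathbf{V}\cdot\nabla\mathbf{V}-\rho\nabla f,\qquad \nabla\cdot\mathbf{V}=0,\ \mathbf{V}|_{\partial\Omega}=\mathbf{0},
\]
promotes $\mathbf{V}$ to $H^2$ and produces $\norm{\nabla P}_{L^2}$.

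The main obstacle is the tight coupling between the velocity and density regularity: bounding $\norm{\rho}_{H^1}$ needs $\norm{\mathbf{V}}_{L^\infty}$ via Lemma \ref{zuidazhi0201}, which in turn requires $H^2$ control of $\mathbf{V}$ whose Stokes right-hand side contains $\rho$. The nonlinear remainders are handled by H\"older combined with Lemmas \ref{poincarebudengshi0201}--\ref{zuidazhi0201} and the standing a priori bound, producing a differential inequality whose integral form fits the template
\[
\norm{\nabla\mathbf{V}}_{L^2}^2 \le C\int_0^t \norm{\nabla\mathbf{V}}_{L^2}^6\,ds + C_1(t+1),
\]
with $C_1 = O(\delta_0^2)$. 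Choosing $\delta$ small enough that the hypothesis of Lemma \ref{budengshidezhengming0201} is satisfied then delivers $\mathcal{E}(t)\le C\delta_0^2$ on $[0,T]$, strictly improving the a priori ansatz and closing the bootstrap. The resulting constant $C$ is allowed to depend on $T_{\max}$ but not on $T$ itself, which is all that the subsequent instability arguments of Section \ref{feixianxing0203} require.
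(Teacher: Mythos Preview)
Your proposal is correct and matches the paper's architecture: the same hierarchy of tests (against $\rho$, $\mathbf{V}$, $\mathbf{V}_t$, and the time-differentiated momentum equation), Stokes regularity for $\|\mathbf{V}\|_{H^2}+\|\nabla P\|_{L^2}$, the transport equation for $\|\nabla\rho\|_{L^2}$ and $\|\rho_t\|_{L^2}$, and Lemma \ref{budengshidezhengming0201} to handle the cubic nonlinearity. One clarification: the paper uses no bootstrap ansatz---Lemma \ref{budengshidezhengming0201} is applied directly to the unconditionally derived inequality $\|\nabla\mathbf{V}\|_{L^2}^2 \le C\int_0^t\|\nabla\mathbf{V}\|_{L^2}^6\,ds + C(t+\delta_0^2)$ and yields only a \emph{crude} bound $\|\nabla\mathbf{V}\|_{L^2}^2\le C^*$, which is then fed back to linearize the differential inequalities and extract the sharp $C\delta_0^2$ bound via Gronwall; your final paragraph conflates these two stages.
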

\begin{proof}
In what follows, we denote by \(C\) a generic positive constant which may depend on \(T\), \(\mu\), \(\theta_{0}\), \(\rho_{0}\), \(\Omega\) and \(f\). 

1) Estimation of \(\left\|\rho\right\|_{L^{q}\left(\Omega\right)}\)(\(1<q\leq\infty\)).

Multiplying \(\eqref{raodong1219}_{1}\) by \(\left|\rho+\rho_{0}\right|^{q-2}\left(\rho+\rho_{0}\right)\) (at present \(q\in (1,+\infty)\)) and then integrating this result over \(\Omega\) (by parts) give that
\begin{align*}
\frac{d}{dt}\int_{\Omega}\left|\rho+\rho_{0}\right|^q dxdy=0,
\end{align*}
which implies that 
\(\left\|\rho+\rho_{0}\right\|_{L^{q}\left(\Omega\right)}=\left\|\theta_{0}+\rho_{0}\right\|_{L^{q}\left(\Omega\right)}\). Therefore, we have 
\begin{align}\label{dengjieguo0102}
\begin{aligned}
\left\|\rho\right\|_{L^{q}\left(\Omega\right)}
\leq \left\|\theta_{0}+\rho_{0}\right\|_{L^{q}\left(\Omega\right)}+\left\|\rho_{0}\right\|_{L^{q}\left(\Omega\right)}.
\end{aligned}
\end{align}
Furthermore, since \(\left\|\theta_{0}+\rho_{0}\right\|_{L^{\infty}\left(\Omega\right)}<\infty\) and \(\left\|\rho_{0}\right\|_{L^{\infty}\left(\Omega\right)}<\infty\), we can deduce that
\begin{align*}
\begin{aligned}
\left\|\rho\right\|_{L^{\infty}\left(\Omega\right)}
\leq \left\|\theta_{0}+\rho_{0}\right\|_{L^{\infty}\left(\Omega\right)}+\left\|\rho_{0}\right\|_{L^{\infty}\left(\Omega\right)}.
\end{aligned}
\end{align*}

2) Estimation of \(\left\|\nabla\mathbf{V}\right\|_{L^2\left(\Omega\right)}\). 

After multiplying \(\eqref{raodong1219}_{2}\) by \(\mathbf{V}_{t}\) in $L^2$ and then integrating by parts, we obtain 
\begin{align}\label{xinzi0102}
\begin{aligned}
\int_{\Omega}\left(\rho+\rho_{0}\right)|\mathbf{V}_{t}|^2 dxdy+\frac{\mu}{2}\frac{d}{dt}\left\|\nabla\mathbf{V}\right\|_{L^{2}\left(\Omega\right)}^2=K_{1}+K_{2},
\end{aligned}
\end{align}
where 
\begin{align*}
\begin{aligned}
K_{1}:=-\int_{\Omega}\left(\rho+\rho_{0}\right)\mathbf{V}\cdot\nabla\mathbf{V}\cdot\mathbf{V}_{t}dxdy,~
K_{2}:=-\int_{\Omega}\rho\nabla f\cdot\mathbf{V}_{t}dxdy.
\end{aligned}
\end{align*}
By utilizing Cauchy inequality for \(K_{1}\) and \(K_{2}\), from \eqref{xinzi0102}  
we can obtain
\begin{align}\label{meiyuan0102}
\begin{aligned}
\int_{\Omega}\left(\rho+\rho_{0}\right)\left|\mathbf{V}_{t}\right|^2dxdy+&\frac{d}{dt}\left\|\nabla\mathbf{V}\right\|_{L^{2}\left(\Omega\right)}^2\leq C\left[
\left\|\mathbf{V}\right\|_{L^{\infty}\left(\Omega\right)}^2\left\|\nabla\mathbf{V}\right\|_{L^{2}\left(\Omega\right)}^2+\left\|\rho\right\|_{L^{2}\left(\Omega\right)}^2
\right]
\\
&\leq C\left[\left\|\mathbf{V}\right\|_{L^2\left(\Omega\right)}^2\left\|\nabla\mathbf{V}\right\|_{L^2\left(\Omega\right)}^4+\varepsilon\left\|\nabla^2\mathbf{V}\right\|_{L^2\left(\Omega\right)}^2+\left\|\rho\right\|_{L^2\left(\Omega\right)}^2\right]
\end{aligned}
\end{align}
where Lemma \ref{zuidazhi0201} and the Cauchy inequality are used.

In addition, we have 
\[
-\mu\Delta\mathbf{V}+\nabla P=-\left(\rho+\rho_{0}\right)\mathbf{V}_{t}-\left(\rho+\rho_{0}\right)\mathbf{V}\cdot\nabla\mathbf{V}-\rho\nabla f,
\]
then from Stokes' estimate and Cauchy inequality, we have 
\begin{align}\label{buzhuoji0102}
\begin{aligned}
\left\|\nabla^2\mathbf{V}\right\|_{L^2\left(\Omega\right)}^2+\left\|\nabla P\right\|_{L^2\left(\Omega\right)}^2
\leq C\left[\left\|\sqrt{\rho+\rho_{0}}\mathbf{V}_{t}\right\|_{L^2\left(\Omega\right)}^2+\left\|\mathbf{V}\right\|_{L^2\left(\Omega\right)}^2\left\|\nabla\mathbf{V}\right\|_{L^2\left(\Omega\right)}^4
+\left\|\rho\right\|_{L^2\left(\Omega\right)}^2\right].
\end{aligned}
\end{align}
Then, multiplying \eqref{meiyuan0102} and \eqref{buzhuoji0102} by two appropriate constants, respectively, and adding up the results give that 
\begin{align}\label{xinzuo0102}
\begin{aligned}
\int_{\Omega}\left(\rho+\rho_{0}\right)\left|\mathbf{V}_{t}\right|^2dx dy+\frac{d}{dt}\left\|\nabla\mathbf{V}\right\|_{L^2\left(\Omega\right)}^2&\leq C\left[\left\|\mathbf{V}\right\|_{L^2\left(\Omega\right)}^2\left\|\nabla\mathbf{V}\right\|_{L^2\left(\Omega\right)}^4+\left\|\rho\right\|_{L^2\left(\Omega\right)}^2\right]
\\
&\leq C\left\|\nabla\mathbf{V}\right\|_{L^2\left(\Omega\right)}^6+C,
\end{aligned}
\end{align}
where \eqref{dengjieguo0102} and Lemma \ref{poincarebudengshi0201} are used. As a result, we have 
\begin{align}\label{from0102}
\left\|\nabla\mathbf{V}\right\|_{L^2\left(\Omega\right)}^2\leq C\int_{0}^{t}\left\|\nabla\mathbf{V}\right\|_{L^2\left(\Omega\right)}^{6}ds+Ct+C\delta_{0}^2,
\end{align}
which together with the Lemma \ref{budengshidezhengming0201} yields that 
there exists \(T^{*}>0\) such that when \(T<T^{*}\), 
\begin{align}\label{pengyou0102}
\begin{aligned}
\left\|\nabla\mathbf{V}\left(t\right)\right\|_{L^2\left(\Omega\right)}^2<C,~\text{for~any~}t<T.
\end{aligned}
\end{align}
Thus, from \eqref{xinzuo0102}, we have 
\begin{align}\label{buyao0102}
\begin{aligned}
\frac{d}{dt}\left\|\nabla\mathbf{V}\right\|_{L^2\left(\Omega\right)}^2\leq C\left[\left\|\nabla\mathbf{V}\right\|_{L^2\left(\Omega\right)}^2+\left\|\rho\right\|_{L^2\left(\Omega\right)}^2\right].
\end{aligned}
\end{align}
In addition, multiplying \(\eqref{raodong1219}_{1}\) by \(\rho\), integrating over \(\Omega\) by parts and from Cauchy inequality give that 
\begin{align*}
\frac{d}{dt}\left\|\rho\right\|_{L^2\left(\Omega\right)}^2
\leq C\left[\left\|\nabla\mathbf{V}\right\|_{L^2\left(\Omega\right)}^2+\left\|\rho\right\|_{L^2\left(\Omega\right)}^2\right],
\end{align*}
which together with \eqref{buyao0102} yields that 
\begin{align*}
\begin{aligned}
\frac{d}{dt}\left[\left\|\nabla\mathbf{V}\right\|_{L^2\left(\Omega\right)}^2+\left\|\rho\right\|_{L^2\left(\Omega\right)}^2\right]\leq C\left[\left\|\nabla\mathbf{V}\right\|_{L^2\left(\Omega\right)}^2+\left\|\rho\right\|_{L^2\left(\Omega\right)}^2\right].
\end{aligned}
\end{align*}
Thus, from Gronwall inequality, we have 
\begin{align}\label{zaijian0102}
\begin{aligned}
\left\|\nabla\mathbf{V}\right\|_{L^2\left(\Omega\right)}^2+\left\|\rho\right\|_{L^2\left(\Omega\right)}^2\leq C\delta_{0}^2.
\end{aligned}
\end{align}

3) Estimations of \(\left\|\mathbf{V}_{t}\right\|_{L^2\left(\Omega\right)}\), \(\left\|\nabla^2\mathbf{V}\right\|_{L^2\left(\Omega\right)}\) and \(\left\|\nabla P\right\|_{L^2\left(\Omega\right)}\).

From \(\eqref{raodong1219}_{2}\), we obtain 
\begin{align*}
\begin{aligned}
\int_{\Omega}\left(\rho+\rho_{0}\right)\mathbf{V}_{t}^2dxdy=\mu\int_{\Omega}\Delta\mathbf{V}\cdot\mathbf{V}_{t}dxdy-\int_{\Omega}\rho\nabla f\cdot\mathbf{V}_{t} dxdy-\int_{\Omega}\left(\rho+\rho_{0}\right)\mathbf{V}\cdot\nabla\mathbf{V}\cdot\mathbf{V}_{t}dxdy,
\end{aligned}
\end{align*}
which together with Cauchy inequality and 
the assumption that \(\theta_{0}+\rho_{0}>\sigma\), where \(\sigma>0\) is a constant, yields that  
\begin{align}\label{yuanliangwo0102}
\lim\limits_{t\rightarrow 0^{+}}\int_{\Omega}\left(\rho+\rho_{0}\right)\mathbf{V}_{t}^2dxdy\leq C\delta_{0}^2.
\end{align}
We differentiate \(\eqref{raodong1219}_{2}\) with respect to \(t\), multiply this result by \(\mathbf{V}_{t}\), then integrate over \(\Omega\) by parts and obtain 
\begin{align}\label{zaodianxiuxi0102}
\frac{d}{dt}\int_{\Omega}\left(\rho+\rho_{0}\right)\mathbf{V}_{t}^2dxdy+2\mu\left\|\nabla\mathbf{V}_{t}\right\|_{L^2\left(\Omega\right)}^2=\sum\limits_{i=3}^{7}K_{i},
\end{align}
where 
\begin{align*}
\begin{aligned}
&K_{3}=2\int_{\Omega}\mathbf{V}\cdot\nabla\left(\rho+\rho_{0}\right)\nabla f\cdot \mathbf{V}_{t}dxdy=-2\int_{\Omega}\left(\rho+\rho_{0}\right)\mathbf{V}\cdot\nabla\left(\nabla f\cdot\mathbf{V}_{t}\right)dxdy,
\\
&K_{4}=2\int_{\Omega}\mathbf{V}\cdot\nabla\left(\rho+\rho_{0}\right)\mathbf{V}\cdot\nabla\mathbf{V}\cdot\mathbf{V}_{t}
dxdy=-2\int_{\Omega}
\left(\rho+\rho_{0}\right)\mathbf{V}\cdot\nabla
\left[\left(\mathbf{V}\cdot\nabla\mathbf{V}\cdot\mathbf{V}_{t}\right)\right]
dxdy,
\\
&K_{5}=-2\int_{\Omega}\left(\rho+\rho_{0}\right)\mathbf{V}_{t}\cdot\nabla\mathbf{V}\cdot\mathbf{V}_{t}dxdy,~K_{6}=-2\int_{\Omega}\left(\rho+\rho_{0}\right)\mathbf{V}\cdot\nabla\mathbf{V}_{t}\cdot\mathbf{V}_{t}dxdy,
\\
&K_{7}=\int_{\Omega}\mathbf{V}\cdot\nabla\left(\rho+\rho_{0}\right)\mathbf{V}_{t}^2 dxdy
=-\int_{\Omega}\left(\rho+\rho_{0}\right)\mathbf{V}\cdot\nabla
\left(\mathbf{V}_{t}^2\right)dxdy.
\end{aligned}
\end{align*}
From Cauchy inequality and Lemmas \ref{poincarebudengshi0201}-\ref{zuidazhi0201}, we obtain 
\begin{align*}
\begin{aligned}
&K_{3}\leq C\left\|\mathbf{V}\right\|_{L^{2}\left(\Omega\right)}^2
+\varepsilon\left\|\nabla\mathbf{V}_{t}\right\|_{L^2\left(\Omega\right)}^2,~K_{4}\leq C\left\|\nabla\mathbf{V}\right\|_{L^2\left(\Omega\right)}^4\left\|\nabla^2\mathbf{V}\right\|_{L^2\left(\Omega\right)}^2
+\varepsilon\left\|\nabla\mathbf{V}_{t}\right\|_{L^2\left(\Omega\right)}^2,
\\
&K_{5},K_{6},K_{7}\leq C\left\|\nabla\mathbf{V}\right\|_{L^2\left(\Omega\right)}^2\left\|\sqrt{\rho+\rho_{0}}\mathbf{V}_{t}\right\|_{L^2\left(\Omega\right)}^2
+\varepsilon\left\|\nabla\mathbf{V}_{t}\right\|_{L^2\left(\Omega\right)}^2.
\end{aligned}
\end{align*}
Thus, from \eqref{zaodianxiuxi0102} and \eqref{zaijian0102}, we have 
\begin{align*}
\begin{aligned}
\frac{d}{dt}\left\|\sqrt{\rho+\rho_{0}}\mathbf{V}_{t}\right\|_{L^2\left(\Omega\right)}^2+\left\|\nabla\mathbf{V}_{t}\right\|_{L^2\left(\Omega\right)}^2
\leq C\left[
\left\|\mathbf{V}\right\|_{L^2\left(\Omega\right)}^2
+\left\|\nabla^2\mathbf{V}\right\|_{L^2\left(\Omega\right)}^2
+\left\|\sqrt{\rho+\rho_{0}}\mathbf{V}_{t}\right\|_{L^2\left(\Omega\right)}^2
\right],
\end{aligned}
\end{align*}
which together with \eqref{buzhuoji0102} and \eqref{zaijian0102} yields that
\begin{align*}
\frac{d}{dt}\left\|\sqrt{\rho+\rho_{0}}\mathbf{V}_{t}\right\|_{L^2\left(\Omega\right)}^2+\left\|\nabla\mathbf{V}_{t}\right\|_{L^2\left(\Omega\right)}^2+\left\|\nabla^2\mathbf{V}\right\|_{L^2\left(\Omega\right)}^2
+\left\|\nabla P\right\|_{L^2\left(\Omega\right)}^2\leq C\left\|\sqrt{\rho+\rho_{0}}\mathbf{V}_{t}\right\|_{L^2\left(\Omega\right)}^2+C\delta_{0}^2.
\end{align*}
Then, from Gronwall inequality, one can deduce that 
\begin{align}\label{fromshangmian0102}
\int_{\Omega}\left(\rho+\rho_{0}\right)\mathbf{V}_{t}^2dxdy+\int_{0}^{t}\left(\left\|\mathbf{V}_{s}\right\|_{H^{1}\left(\Omega\right)}^2+\left\|\nabla^2\mathbf{V}\right\|_{L^2\left(\Omega\right)}^2\right)ds\leq C\delta_{0}^2.
\end{align} 
Again, from \eqref{buzhuoji0102}, we have 
\begin{align}\label{wbaobeiduibuqi0102}
\begin{aligned}
\left\|\nabla^2\mathbf{V}\right\|_{L^2\left(\Omega\right)}^2+\left\|\nabla P\right\|_{L^2\left(\Omega\right)}^2
\leq C\delta_{0}^2.
\end{aligned}
\end{align}

4)Estimations of \(\left\|\rho_{t}\right\|_{L^2\left(\Omega\right)}\) and \(\left\|\nabla \rho\right\|_{L^2\left(\Omega\right)}\). 

According to \eqref{fromshangmian0102} and \eqref{wbaobeiduibuqi0102}, we can deduce that 
\[
-\mu\Delta\mathbf{V}+\nabla P=-\left(\rho+\rho_{0}\right)\mathbf{V}_{t}-\left(\rho+\rho_{0}\right)\mathbf{V}\cdot\nabla\mathbf{V}-\rho\nabla f\in L^{2}\left(0,T; \left[L^4\left(\Omega\right)\right]^2\right),
\]
which together with Stokes' estimate gives that 
\begin{align}\label{fangshong0103}
\begin{aligned}
\int_{0}^{t}\left\|\nabla^2\mathbf{V}\right\|_{L^{4}\left(\Omega\right)}^2ds+\int_{0}^{t}\left\|\nabla P\right\|_{L^{4}\left(\Omega\right)}^2 ds \leq C\delta_{0}^2.
\end{aligned}
\end{align}
From \(\eqref{raodong1219}_{1}\), we obtain 
\begin{align}\label{jiushizheyang0103}
\begin{aligned}
&\rho_{tx}+\mathbf{V}_{x}\cdot\nabla\left(\rho+\rho_{0}\right)+\mathbf{V}\cdot\nabla\left(\rho_{x}+\rho_{0x}\right)=0,
\\
&\rho_{ty}+\mathbf{V}_{y}\cdot\nabla\left(\rho+\rho_{0}\right)+\mathbf{V}\cdot\nabla\left(\rho_{y}+\rho_{0y}\right)=0.
\end{aligned}
\end{align}
We multiply \(\eqref{jiushizheyang0103}_{1}\) and \(\eqref{jiushizheyang0103}_{2}\) by 
\(\rho_{x}\) and \(\rho_{y}\), respectively, then integrate them over \(\Omega\), add up these results and have 
\begin{align}\label{mingzi0103}
\begin{aligned}
\frac{d}{dt}\left\|\nabla \rho\right\|_{L^2\left(\Omega\right)}^2\leq 2\left(\left\|\nabla\mathbf{V}\right\|_{L^{\infty}\left(\Omega\right)}+1\right)\left\|\nabla\rho\right\|_{L^2\left(\Omega\right)}^2+C\delta_{0}^2,
\end{aligned}
\end{align}
where \(\left\|\nabla\mathbf{V}\right\|_{L^{\infty}\left(\Omega\right)}\) is integrable from \eqref{fangshong0103} and the Cauchy inequality is used. Furthermore, from the Gronwall inequality for \eqref{mingzi0103}, we have 
\begin{align}\label{tidu0103}
\left\|\nabla\rho\right\|_{L^2\left(\Omega\right)}^2\leq C\delta_{0}^2.
\end{align}

In addition, 
\[
\rho_{t}=-\mathbf{V}\cdot\nabla\left(\rho+\rho_{0}\right),
\]
which implies that 
\begin{align}\label{pinggu0103}
\left\|\rho_{t}\right\|_{L^2\left(\Omega\right)}^2\leq C\delta_{0}^2.
\end{align}

Thus, from \eqref{zaijian0102},\eqref{fromshangmian0102},\eqref{wbaobeiduibuqi0102}, \eqref{tidu0103} and \eqref{pinggu0103}, we can deduce \eqref{guji0102}. 
\end{proof}

Assume that 
\(\left\|\left(\mathbf{u}_{0},\theta_{0}\right)\right\|_{H^{2}\left(\Omega\right)}=\sqrt{\left\|\mathbf{u}_{0}\right\|_{H^2\left(\Omega\right)}^2+\left\|\theta_{0}\right\|_{H^{1}\left(\Omega\right)}^2}=\delta_{0}<\delta\), where \(\left(\mathbf{u}_{0},\theta_{0}\right)\) can be found in the Theorem \ref{xianxingbuwending1221} and \(\delta\) can be seen in Proposition \ref{nengliangguji0102}.
From Remark \ref{buweiling0202}, we can assume that 
\(u_{02}\neq 0\) and 
denote 
\[
\tau_{0}:=\frac{\left\|u_{02}\right\|_{L^2\left(\Omega\right)}}{\left\|\left(\mathbf{u}_{0},\theta_{0}\right)\right\|_{H^{2}\left(\Omega\right)}},
\]
then \(\tau_{0}\in (0,1]\).  Furthermore, let \(t_{K}=\frac{1}{\Lambda}\ln{\frac{2K}{\tau_{0}}}\). Thus, the solution \(\left(\mathbf{V},\rho\right)\) to the linearized system \eqref{xianxing1219} satisfies 
\begin{align}\label{xinan0103}
\left\|V_{2}\left(t_{K}\right)\right\|_{L^{2}\left(\Omega\right)}
=\left\|e^{t_{K}\Lambda}u_{02}\right\|_{L^{2}\left(\Omega\right)}=2K\delta_{0}.
\end{align}

Let 
\[
\left(\mathbf{u}_{0}^{\varepsilon},\theta_{0}^{\varepsilon}\right)
=\varepsilon\left(\mathbf{u}_{0},\theta_{0}\right),~\varepsilon\in(0,1).
\]
As \(\varepsilon\) is small enough, the maximum existence time \(T_{K\varepsilon}\) of the strong solution \(\left(\mathbf{V}^{\varepsilon},\rho^{\varepsilon}\right)\) to the nonlinear system \eqref{raodong1219} with initial value \(\left(\mathbf{u}_{0}^{\varepsilon},\theta_{0}^{\varepsilon}\right)\) can be greater than \(t_{K}\) and we have the following estimate 
\begin{align*}
\begin{aligned}
    \sup\limits_{t\in[0,T]} 
    \bigg{\{}
\left\|\mathbf{V}^{\varepsilon}\right\|_{H^2\left(\Omega\right)}^2
&+\left\|\rho^{\varepsilon}\right\|_{H^1\left(\Omega\right)}^2    +\left\|\nabla P^{\varepsilon}\right\|_{L^2\left(\Omega\right)}^2+
\left\|\left(\mathbf{V}_{t}^{\varepsilon},\rho_{t}^{\varepsilon}\right)\right\|_{L^2\left(\Omega\right)}^2
\\
&+\int_{0}^{t}\left[\left\|\mathbf{V}_{s}^{\varepsilon}\right\|_{H^1\left(\Omega\right)}^2+\left\|\nabla\mathbf{V}^{\varepsilon}\right\|_{H^1\left(\Omega\right)}^2\right]ds
    \bigg{\}}\leq C\delta_{0}^2,
    \end{aligned}
\end{align*}
where \(T<T_{K\varepsilon}\).

Finally, we utilize the method of contradiction to verify the Theorem \ref{lipuxizhi0202}.
\begin{lemma}\label{yinli0103}
For any positive and small enough \(\varepsilon\), the strong solution \(\left(\mathbf{V}^{\varepsilon},\rho^{\varepsilon}\right)\) of the nonlinear system \eqref{raodong1219} with the initial data \(\left(\mathbf{u}_{0}^{\varepsilon},\theta_{0}^{\varepsilon}\right)\) satisfies 
\begin{align*}
\left\|V_{2}^{\varepsilon}\left(t_{K}\right)\right\|_{L^2\left(\Omega\right)}\geq F\left(\left\|\left(\mathbf{u}_{0}^{\varepsilon},\theta_{0}^{\varepsilon}\right)\right\|_{H^{2}\left(\Omega\right)}\right),~t_{K}\in(0,T_{K\varepsilon}].
\end{align*}
\end{lemma}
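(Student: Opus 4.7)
The plan is to argue by contradiction, exploiting the linear growth $2K\delta_0$ displayed in \eqref{xinan0103} against the hypothetical Lipschitz bound $F(y)\le Ky$ through a rescaling argument. Suppose the claim fails; then along a sequence $\varepsilon_n\to 0^+$ (relabelled $\varepsilon$) we have
\begin{align*}
\bigl\|V_2^{\varepsilon}(t_K)\bigr\|_{L^2(\Omega)}<F\bigl(\|(\mathbf{u}_0^{\varepsilon},\theta_0^{\varepsilon})\|_{H^2(\Omega)}\bigr)\le K\,\|(\mathbf{u}_0^{\varepsilon},\theta_0^{\varepsilon})\|_{H^2(\Omega)}=K\varepsilon\delta_0,
\end{align*}
while Proposition \ref{nengliangguji0102} ensures that, for $\varepsilon$ small, each $(\mathbf{V}^{\varepsilon},\rho^{\varepsilon})$ exists on $[0,t_K]$ and obeys the $\delta_0^2 \varepsilon^2$-sized bound \eqref{guji0102} (since the initial datum has size $\varepsilon\delta_0$).

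Next I would introduce the rescaled unknowns $(\mathbf{w}^{\varepsilon},\eta^{\varepsilon}):=\varepsilon^{-1}(\mathbf{V}^{\varepsilon},\rho^{\varepsilon})$, with corresponding pressure $\pi^{\varepsilon}:=\varepsilon^{-1}P^{\varepsilon}$. A direct substitution into \eqref{raodong1219} shows that $(\mathbf{w}^{\varepsilon},\eta^{\varepsilon})$ solves
\begin{align*}
\begin{cases}
\eta^{\varepsilon}_t+\varepsilon\,\mathbf{w}^{\varepsilon}\!\cdot\!\nabla\eta^{\varepsilon}+h\,\mathbf{w}^{\varepsilon}\!\cdot\!\nabla f=0,\\[2pt]
(\varepsilon\eta^{\varepsilon}+\rho_0)\mathbf{w}^{\varepsilon}_t+\varepsilon(\varepsilon\eta^{\varepsilon}+\rho_0)\mathbf{w}^{\varepsilon}\!\cdot\!\nabla\mathbf{w}^{\varepsilon}+\nabla\pi^{\varepsilon}=\mu\Delta\mathbf{w}^{\varepsilon}-\eta^{\varepsilon}\nabla f,\\[2pt]
\nabla\!\cdot\!\mathbf{w}^{\varepsilon}=0,\quad \mathbf{w}^{\varepsilon}|_{\partial\Omega}=\mathbf{0},
\end{cases}
\end{align*}
with initial data $(\mathbf{w}^{\varepsilon},\eta^{\varepsilon})|_{t=0}=(\mathbf{u}_0,\theta_0)$. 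Dividing the estimate \eqref{guji0102} by $\varepsilon^2$ gives uniform bounds, independent of $\varepsilon$, for $(\mathbf{w}^{\varepsilon},\eta^{\varepsilon})$ in the norms appearing on the left-hand side of \eqref{guji0102}. By Banach--Alaoglu and the Aubin--Lions lemma I may extract a subsequence (still denoted with $\varepsilon$) such that $\mathbf{w}^{\varepsilon}\rightharpoonup\mathbf{w}$ weakly-$*$ in $L^\infty(0,t_K;H^2)$ and strongly in $L^2(0,t_K;H^1)\cap C([0,t_K];L^2)$, $\eta^{\varepsilon}\rightharpoonup\eta$ weakly-$*$ in $L^\infty(0,t_K;H^1)$ and strongly in $C([0,t_K];L^2)$, and analogous limits for the time derivatives.

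I would then pass to the limit $\varepsilon\to 0^+$ in the rescaled system. The nonlinear terms $\varepsilon\,\mathbf{w}^{\varepsilon}\!\cdot\!\nabla\eta^{\varepsilon}$ and $\varepsilon(\varepsilon\eta^{\varepsilon}+\rho_0)\mathbf{w}^{\varepsilon}\!\cdot\!\nabla\mathbf{w}^{\varepsilon}$ vanish in $L^1$ by the uniform bounds multiplied by the explicit $\varepsilon$, and the term $(\varepsilon\eta^{\varepsilon}+\rho_0)\mathbf{w}^{\varepsilon}_t$ converges weakly to $\rho_0\mathbf{w}_t$ by strong--weak pairing since $\varepsilon\eta^{\varepsilon}\to 0$ strongly in $C([0,t_K];L^2)$ while $\mathbf{w}^{\varepsilon}_t$ is bounded. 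The limit $(\mathbf{w},\eta)$ therefore solves the linearized system \eqref{xianxing1219} with initial data $(\mathbf{u}_0,\theta_0)$, and by the uniqueness assertion of Lemma \ref{weiyixing1230} it must coincide with the exponentially growing solution $(\mathbf{V},\rho)=e^{\Lambda t}(\mathbf{u}_0,\theta_0)$ constructed in Theorem \ref{xianxingbuwending1221}. Strong convergence of $\mathbf{w}^{\varepsilon}$ in $C([0,t_K];L^2)$ now yields
\begin{align*}
\bigl\|w^{\varepsilon}_2(t_K)\bigr\|_{L^2(\Omega)}\longrightarrow\bigl\|V_2(t_K)\bigr\|_{L^2(\Omega)}=2K\delta_0,
\end{align*}
whereas the contradiction hypothesis rewritten in the rescaled variables reads $\|w^{\varepsilon}_2(t_K)\|_{L^2(\Omega)}<K\delta_0$. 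Passing to the limit contradicts $2K\delta_0\le K\delta_0$, completing the argument.

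The principal technical obstacle is the limit passage in the momentum equation: the density-dependent coefficient $(\varepsilon\eta^{\varepsilon}+\rho_0)$ multiplies the time derivative $\mathbf{w}^{\varepsilon}_t$, which only converges weakly, so I must extract strong convergence of $\varepsilon\eta^{\varepsilon}$ (trivial once $\eta^{\varepsilon}$ is bounded in $L^\infty(L^\infty)$) and use the Aubin--Lions compactness together with the bound on $\mathbf{w}^{\varepsilon}_{tt}$-like quantities implicit in \eqref{guji0102}. The other care point is justifying that the lower bound $\varepsilon\eta^{\varepsilon}+\rho_0\ge\sigma$, required for Proposition \ref{nengliangguji0102}, survives on the whole interval $[0,t_K]$ uniformly in small $\varepsilon$; this follows from the transport structure of the density equation and the assumption $\inf_\Omega(\rho_0+\theta_0)>\sigma$ after shrinking $\varepsilon$.
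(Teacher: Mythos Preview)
Your proposal is correct and follows essentially the same approach as the paper: assume the bound fails along a sequence, rescale by $\varepsilon^{-1}$, use the uniform estimates from Proposition~\ref{nengliangguji0102} to extract a subsequence converging (via Aubin--Lions) to a solution of the linearized problem, invoke Lemma~\ref{weiyixing1230} to identify the limit with $e^{\Lambda t}(\mathbf{u}_0,\theta_0)$, and reach a contradiction at $t=t_K$ by comparing the limit value $2K\delta_0$ with the hypothetical bound $K\delta_0$. Your treatment of the limit passage in the density-weighted term and of the uniform positivity of $\varepsilon\eta^{\varepsilon}+\rho_0$ is in fact more explicit than the paper's.
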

\begin{proof}
This lemma is proved by the method of contradiction. We assume that there exists a positive constant \(\varepsilon_{0}\) such that 
\(\left(\mathbf{V}^{\varepsilon_{0}},\rho^{\varepsilon_{0}}\right)\) satisfies 
\begin{align*}
\left\|V_{2}^{\varepsilon}\left(t\right)\right\|_{L^2\left(\Omega\right)}\leq F\left(\left\|\left(\mathbf{u}_{0}^{\varepsilon_{0}},\theta_{0}^{\varepsilon_{0}}\right)\right\|_{H^{2}\left(\Omega\right)}\right),~\forall~t\in[0,t_{K}].
\end{align*}
Thus, by the property of \(F\), we can deduce that 
\begin{align}\label{yexu0103}
\left\|V_{2}^{\varepsilon_{0}}\left(t\right)\right\|_{L^{2}\left(\Omega\right)}\leq K\delta_{0}\varepsilon,~\forall~t\in[0,t_{K}].
\end{align}
Let \(\left(\overline{\mathbf{V}}^{\varepsilon_{0}},\overline{P}^{\varepsilon_{0}},\overline{\rho}^{\varepsilon_{0}}\right)=\frac{\left(\mathbf{V}^{\varepsilon_{0}},P^{\varepsilon_{0}},\rho^{\varepsilon_{0}}\right)}{\varepsilon_{0}}\). Then, we have 
\begin{align}\label{zhemeduo0103}
\begin{cases}
\overline{\rho}_{t}^{\varepsilon_{0}}+\varepsilon_{0}\overline{\mathbf{V}}^{\varepsilon_{0}}\cdot\nabla\overline{\rho}^{\varepsilon_{0}}+\overline{\mathbf{V}}^{\varepsilon_{0}}\cdot\nabla\rho_{0}=0,
\\
\left(\varepsilon_{0}\overline{\rho}_{t}^{\varepsilon_{0}}+\rho_{0}\right)\overline{\mathbf{V}}_{t}^{\varepsilon_{0}}+\left(\varepsilon_{0}\overline{\rho}^{\varepsilon_{0}}+\rho_{0}\right)\varepsilon_{0}\overline{\mathbf{V}}^{\varepsilon_{0}}\cdot\nabla\overline{\mathbf{V}}^{\varepsilon_{0}}+\nabla\overline{P}^{\varepsilon_{0}}=\mu\Delta\overline{\mathbf{V}}^{\varepsilon_{0}}-\overline{\rho}^{\varepsilon_{0}}\nabla f,
\\
\nabla\cdot\overline{\mathbf{V}}^{\varepsilon_{0}}=0,~\overline{\mathbf{V}}^{\varepsilon_{0}}|_{\partial\Omega}=\mathbf{0},~\left(\overline{\mathbf{V}}^{\varepsilon_{0}}\left(0\right),\overline{\rho}^{\varepsilon_{0}}\right)=\left(\mathbf{u}_{0},\theta_{0}\right).
\end{cases}
\end{align}
And we have 
\begin{align*}
\begin{aligned}
    \sup\limits_{t\in[0,T]} 
    \bigg{\{}
\left\|\mathbf{V}^{\varepsilon_{0}}\right\|_{H^2\left(\Omega\right)}^2
&+\left\|\rho^{\varepsilon_{0}}\right\|_{H^1\left(\Omega\right)}^2    +\left\|\nabla P^{\varepsilon_{0}}\right\|_{L^2\left(\Omega\right)}^2+
\left\|\left(\mathbf{V}_{t}^{\varepsilon_{0}},\rho_{t}^{\varepsilon_{0}}\right)\right\|_{L^2\left(\Omega\right)}^2
\\
&+\int_{0}^{t}\left[\left\|\mathbf{V}_{s}^{\varepsilon_{0}}\right\|_{H^1\left(\Omega\right)}^2+\left\|\nabla\mathbf{V}^{\varepsilon_{0}}\right\|_{H^1\left(\Omega\right)}^2\right]ds
    \bigg{\}}\leq C\delta_{0}^2,
    \end{aligned}
\end{align*}
where \(T<T_{K\varepsilon_{0}}\). This estimate allows us to obtain the following convergent subsequences (rebelled by the same notation), as \(\varepsilon_{0}\rightarrow 0^{+}\), 
\begin{align*}
\begin{aligned}
&\left(\overline{\mathbf{V}}^{\varepsilon_{0}}_{t},\nabla\overline{P}^{\varepsilon_{0}},\overline{\rho}_{t}^{\varepsilon_{0}}\right)\rightarrow 
\left(\overline{\mathbf{V}}_{t},\nabla\overline{P},\overline{\rho}_{t}\right)~\text{weakly~star~in}~L^{\infty}\left(0,t_{K};\left[L^{2}\left(\Omega\right)\right]^{5}\right),
\\
&\left(\overline{\mathbf{V}}^{\varepsilon_{0}},\overline{\rho}^{\varepsilon_{0}}\right)\rightarrow \left(\overline{\mathbf{V}},\overline{\rho}\right)~\text{weakly~star~in~}L^{\infty}\left(0,t_{K};\left[H^{2}\left(\Omega\right)\right]^2\times H^{1}\left(\Omega\right)\right),
\\
&\left(\overline{\mathbf{V}}^{\varepsilon_{0}},\overline{\rho}^{\varepsilon_{0}}\right)\rightarrow\left(\overline{\mathbf{V}},\overline{\rho}\right)~\text{strongly~in~}C\left(0,t_{K};[H^{1}\left(\Omega\right)]^2\times L^{2}\left(\Omega\right)\right).
\end{aligned}
\end{align*}
And from \eqref{yexu0103}, we have 
\begin{align}\label{xuexi0103}
\sup\limits_{0\leq t\leq t_{K}}
\left\|\overline{V}_{2}\left(t\right)\right\|_{L^{2}\left(\Omega\right)}
\leq K\delta_{0}.
\end{align}
In \eqref{zhemeduo0103}, let \(\varepsilon_{0}\rightarrow 0^{+}\), we can verify that \(\left(\overline{\mathbf{V}},\overline{\rho}\right)\) is the weak solution to the linearized system \eqref{xianxing1219} with the initial value \(\left(\overline{\mathbf{V}}\left(0\right),\overline{\rho}\left(0\right)\right)=\left(\mathbf{u}_{0},\theta_{0}\right)\). Then by the virtue of uniqueness of solution, see the Lemma \ref{weiyixing1230}, we have 
\[
\overline{\mathbf{V}}=\mathbf{V},~\overline{\rho}=\rho.
\]
However, from \eqref{xinan0103} and \eqref{xuexi0103}, it deduces a contradiction. Thus, the Lemma is proved. 
\end{proof}
\begin{remark}\label{dingyi0202}
    For the case \(u_{01}\neq 0\), the proof is similar. Thus, we omit the procedure.
\end{remark}
\subsection{Proof of the Theorem \ref{hadamardyiyixia0202}}\label{nonlinear0203}
The following is the nonlinear energy estimate of the strong solution to the system \eqref{raodong1219}, which plays a crucial role in the proof of Theorem \ref{hadamardyiyixia0202}.
\begin{proposition}\label{feixiannengliang0104}
    There exists a positive constant \(\delta\ll 1\) such that 
    when 
    \[
    \mathcal{E}\left(t\right):
    =\sqrt{\left\|\mathbf{V}\left(t\right)\right\|_{H^{1}\left(\Omega\right)}^2+\left\|\rho\left(t\right)\right\|_{L^2\left(\Omega\right)}^2}\leq \delta_{0}<\delta,~\forall~t\in [0,T]\subset [0,T_{\text{max}}),
    \]
    where 
    \(\left(\mathbf{V}\left(t\right),\rho\left(t\right)\right)\) is the strong solution to the nonlinear system \eqref{raodong1219} with initial value \(\left(\mathbf{V}\left(0\right),\rho\left(0\right)\right)\in \left[H^{2}\left(\Omega\right)\cap\mathbf{H}_{\text{div}}\right]\times \left[H^{1}\left(\Omega\right)\cap L^{\infty}\left(\Omega\right)\right] \) and \(T_{\text{max}}\) is the maximum existence time, we have the following estimate,
    \begin{align*}
    \widetilde{\mathcal{E}}^{2}\left(t\right)
    +\left\|\left(\mathbf{V}_{t},\nabla P\right)\right\|_{L^2\left(\Omega\right)}^2+\int_{0}^{t}\left\|\left(\nabla\mathbf{V},\mathbf{V}_{s},\nabla\mathbf{V}_{s}\right)\right\|_{L^2\left(\Omega\right)}^2 ds
    \leq C\left(\widetilde{\mathcal{E}}_{0}^{2}+\int_{0}^{t}\left\|\left(\mathbf{V},\rho\right)\right\|_{L^2(\Omega)}^2ds\right),
    \end{align*}
 where 
 \(\widetilde{\mathcal{E}}\left(t\right)=\sqrt{\left\|\mathbf{V}\left(t\right)\right\|_{H^{2}\left(\Omega\right)}^2+\left\|\rho\left(t\right)\right\|_{L^2\left(\Omega\right)}^2}\), \(\widetilde{\mathcal{E}}_{0}=\widetilde{\mathcal{E}}\left(0\right)\) and \(C=C\left(f,\rho_{0},\mu,\Omega\right)>0\).
\end{proposition}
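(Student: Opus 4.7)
The plan is to replay the cascade of energy estimates used in Proposition \ref{nengliangguji0102}, but with two crucial modifications: (i) the smallness budget is now the bootstrap hypothesis $\mathcal{E}(t)\le\delta_{0}<\delta$ on $[0,T]$ rather than the initial-data smallness, and (ii) every forcing contribution arising from the buoyancy term $-\rho\nabla f$ and the transport coupling $\mathbf{V}\cdot\nabla\rho_{0}$ is retained on the right-hand side as $\|(\mathbf{V},\rho)\|_{L^{2}}^{2}$ instead of being absorbed into $\delta_{0}^{2}$. The desired time-integral structure of the target estimate then arises naturally after a final Gronwall-type integration.

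First, testing $\eqref{raodong1219}_{2}$ against $\mathbf{V}_{t}$ as in \eqref{xinzi0102} and invoking the Stokes-type estimate \eqref{buzhuoji0102} for the velocity field with forcing $-(\rho+\rho_{0})\mathbf{V}_{t}-(\rho+\rho_{0})\mathbf{V}\cdot\nabla\mathbf{V}-\rho\nabla f$, I bound the quadratic nonlinearity by Lemmas \ref{yuanlaihaishao0201}--\ref{zuidazhi0201} as $C\|\mathbf{V}\|_{L^{2}}\|\nabla^{2}\mathbf{V}\|_{L^{2}}\|\nabla\mathbf{V}\|_{L^{2}}^{2}\le C\delta^{2}\|\nabla^{2}\mathbf{V}\|_{L^{2}}^{2}$, which can be absorbed on the left provided $\delta$ is chosen small enough. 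This yields
\begin{align*}
\frac{d}{dt}\|\nabla\mathbf{V}\|_{L^{2}}^{2}+\|\mathbf{V}_{t}\|_{L^{2}}^{2}+\|\nabla^{2}\mathbf{V}\|_{L^{2}}^{2}+\|\nabla P\|_{L^{2}}^{2}\le C\bigl(\|\mathbf{V}\|_{L^{2}}^{2}+\|\rho\|_{L^{2}}^{2}\bigr).
\end{align*}

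Next, differentiating $\eqref{raodong1219}_{2}$ in $t$ and testing against $\mathbf{V}_{t}$ exactly as in \eqref{zaodianxiuxi0102} produces the identity $\tfrac{d}{dt}\|\sqrt{\rho+\rho_{0}}\,\mathbf{V}_{t}\|_{L^{2}}^{2}+2\mu\|\nabla\mathbf{V}_{t}\|_{L^{2}}^{2}=\sum_{i=3}^{7}K_{i}$, with the $K_{i}$ defined in the proof of Proposition \ref{nengliangguji0102}. Each $K_{i}$ is handled by the same H\"older/Cauchy argument given there, but now the quantities $\|\mathbf{V}\|_{L^{2}}$ and $\|\rho\|_{L^{2}}$ exposed after applying Lemmas \ref{yuanlaihaishao0201}--\ref{zuidazhi0201} are kept explicit on the right, whereas the auxiliary prefactors $\|\nabla\mathbf{V}\|_{L^{2}}^{2}$ and $\|\nabla\mathbf{V}\|_{L^{2}}^{4}$ are swallowed by $\delta^{2}$. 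A parallel $L^{2}$-estimate for the density, obtained by multiplying $\eqref{raodong1219}_{1}$ by $\rho$ and using $\nabla\cdot\mathbf{V}=0$, gives $\tfrac{d}{dt}\|\rho\|_{L^{2}}^{2}\le C(\|\mathbf{V}\|_{L^{2}}^{2}+\|\rho\|_{L^{2}}^{2})$. Summing the three differential inequalities and integrating from $0$ to $t$ then produces the claimed estimate, once $\|\mathbf{V}_{t}(0)\|_{L^{2}}$ is controlled by $\widetilde{\mathcal{E}}_{0}$; this last step is obtained by evaluating $\eqref{raodong1219}_{2}$ at $t=0$ and exploiting the pointwise lower bound on $\rho(0)+\rho_{0}$.

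The main obstacle is establishing the uniform positivity of $\rho+\rho_{0}$ on $[0,T]$, which is needed in order to convert $\|\sqrt{\rho+\rho_{0}}\,\mathbf{V}_{t}\|_{L^{2}}^{2}$ into $\|\mathbf{V}_{t}\|_{L^{2}}^{2}$ and to close the chain of estimates. This is secured by noting that $\eqref{raodong1219}_{1}$ is precisely the transport equation $\partial_{t}(\rho+\rho_{0})+\mathbf{V}\cdot\nabla(\rho+\rho_{0})=0$, so $\rho+\rho_{0}$ is constant along Lagrangian trajectories; consequently the standing hypothesis $\inf_{\Omega}\{\rho(0)+\rho_{0}\}\ge\sigma>0$ from Theorem \ref{lipuxizhi0202} persists for all $t\in[0,T]$, and the equivalence of the two weighted norms closes the argument. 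A secondary delicacy is checking that every constant $C$ depends only on $f,\rho_{0},\mu,\Omega$ and not on $T$ or on $\delta$: this reduces to verifying that the absorption step $C\delta^{2}\|\nabla^{2}\mathbf{V}\|_{L^{2}}^{2}$ is uniform on the whole bootstrap interval, which is precisely where the a priori bound $\mathcal{E}(t)\le\delta_{0}<\delta$ (rather than the initial-data smallness used in Proposition \ref{nengliangguji0102}) is indispensable.
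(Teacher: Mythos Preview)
Your proposal is correct and follows essentially the same strategy as the paper: test $\eqref{raodong1219}_{2}$ against $\mathbf{V}_{t}$, couple with the Stokes estimate, time-differentiate and test again against $\mathbf{V}_{t}$, add the $L^{2}$-density estimate, control $\mathbf{V}_{t}(0)$ via the equation at $t=0$, and close using the persistence of $\inf(\rho+\rho_{0})\ge\sigma$ along characteristics. The only cosmetic difference is that the paper also tests $\eqref{raodong1219}_{2}$ directly against $\mathbf{V}$ to obtain $\int_{0}^{t}\|\nabla\mathbf{V}\|_{L^{2}}^{2}\,ds$ in one stroke, whereas you recover this term implicitly from your first differential inequality (which already carries $\|\nabla^{2}\mathbf{V}\|_{L^{2}}^{2}$ on the left) together with the interpolation $\|\nabla\mathbf{V}\|_{L^{2}}^{2}\le\|\mathbf{V}\|_{L^{2}}\|\nabla^{2}\mathbf{V}\|_{L^{2}}$; the paper also groups the terms arising from the time-differentiated identity as four $\widetilde{K}_{i}$ rather than your five $K_{3}$--$K_{7}$, but the estimates are identical.
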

\begin{proof}
Multiply \(\eqref{raodong1219}_{1}\) by \(\rho\) and integrate it over \(\Omega\), then we have 
\begin{align}\label{duibuqi0104}
\frac{d}{dt}\left\|\rho\right\|_{L^{2}\left(\Omega\right)}^2=-2\int_{\Omega}\mathbf{V}\cdot\nabla \rho_{0}\rho dxdy 
\leq C\left\|\left(\mathbf{V},\rho\right)\right\|_{L^{2}\left(\Omega\right)}^2.
\end{align}
After multiplying \(\eqref{raodong1219}_{1}\) by 
\(\left|\rho+\rho_{0}\right|^{q-2}\left(\rho+\rho_{0}\right)\) \(\left(q>1\right)\) and integrating it over \(\Omega\) by parts, we obtain 
\[
\left\|\rho+\rho_{0}\right\|_{L^{q}\left(\Omega\right)}=\left\|\rho\left(0\right)+\rho_{0}\right\|_{L^{q}\left(\Omega\right)},
\]
which shows that 
\begin{align*}
\left\|\rho\right\|_{L^{q}\left(\Omega\right)}\leq \left\|\rho\left(0\right)+\rho_{0}\right\|_{L^{q}\left(\Omega\right)}+\left\|\rho_{0}\right\|_{L^{q}\left(\Omega\right)},~\forall~1<q\leq\infty.
\end{align*}
Multiply \(\eqref{raodong1219}_{2}\) by \(\mathbf{V}\) and integrate it over \(\Omega\) by parts, we have 
\begin{align}\label{jiehe0104}
\frac{d}{dt}\int_{\Omega}\left(\rho+\rho_{0}\right)\left|\mathbf{V}\right|^2dxdy
+2\mu\left\|\nabla\mathbf{V}\right\|_{L^{2}\left(\Omega\right)}^2 
=-2\int_{\Omega}\rho\nabla f\cdot\mathbf{V}dxdy\leq C\left\|\left(\mathbf{V},\rho\right)\right\|_{L^{2}\left(\Omega\right)}^2.
\end{align}
Add \eqref{duibuqi0104} and \eqref{jiehe0104}, then integrate this result with respect to \(t\), we get 
\begin{align}\label{meiyouwenti0104}
\left\|\left(\mathbf{V},\rho\right)\right\|_{L^2(\Omega)}^2+\int_{0}^{t}\left\|\nabla\mathbf{V}\right\|_{L^{2}\left(\Omega\right)}^2ds
\leq C\left(\widetilde{\mathcal{E}}_{0}^2+\int_{0}^{t}\left\|\left(\mathbf{V},\rho\right)\right\|_{L^{2}\left(\Omega\right)}^2ds\right),
\end{align}
where the assumption \(\left(\rho\left(0\right)+\rho_{0}\right)>\sigma\) is used.

Multiply \(\eqref{raodong1219}_{2}\) by \(\mathbf{V}_{t}\) and integrate it over \(\Omega\), we obtain 
\begin{align}\label{yongxin0104}
\int_{\Omega}\left(\rho+\rho_{0}\right)\left|\mathbf{V}_{t}\right|^2dx dy=&-\int_{\Omega}\left(\rho+\rho_{0}\right)\mathbf{V}\cdot\nabla\mathbf{V}\cdot\mathbf{V}_{t}dxdy
\\
&+\mu\int_{\Omega}\Delta\mathbf{V}\cdot\mathbf{V}_{t}dxdy-\int_{\Omega}\rho\nabla f\cdot\mathbf{V}_{t}dxdy,
\end{align}
which together with Cauchy inequality and letting \(t\rightarrow 0^{+}\) yields that 
\begin{align}\label{zenmekeneng0104}
\lim\limits_{t\rightarrow 0^{+}}
\int_{\Omega}\left(\rho+\rho_{0}\right)\left|\mathbf{V}_{t}\right|^{2}dxdy\leq C\widetilde{\mathcal{E}}_{0}^{2}.
\end{align}

We differentiate \(\eqref{raodong1219}_{2}\) with respect to \(t\), multiply it by \(\mathbf{V}_{t}\) and integrate it over \(\Omega\), then obtain 
\begin{align}\label{hongceng0104}
\begin{aligned}
\frac{1}{2}\frac{d}{dt}&\int_{\Omega}
\left(\rho+\rho_{0}\right)\left|\mathbf{V}_{t}\right|^2dxdy 
+\mu\left\|\nabla\mathbf{V}_{t}\right\|_{L^{2}\left(\Omega\right)}^2
=\sum\limits_{i=1}^{4}\widetilde{K}_{i},
\end{aligned}
\end{align}
where 
\begin{align*}
&\widetilde{K}_{1}=-2\int_{\Omega}\left(\rho+\rho_{0}\right)\mathbf{V}\cdot\nabla\mathbf{V}_{t}\cdot\mathbf{V}_{t}dxdy,~ 
\widetilde{K}_{2}=-\int_{\Omega}\left(\rho+\rho_{0}\right)\mathbf{V}\cdot\nabla\left(\mathbf{V}\cdot\nabla\mathbf{V}\cdot\mathbf{V}_{t}\right)dxdy,
\\
&\widetilde{K}_{3}=-\int_{\Omega}\left(\rho+\rho_{0}\right)\mathbf{V}_{t}\cdot\nabla\mathbf{V}\cdot\mathbf{V}_{t}dxdy,~
\widetilde{K}_{4}=-\int_{\Omega}\left(\rho+\rho_{0}\right)\mathbf{V}\cdot\nabla\left(\nabla f\cdot\mathbf{V}_{t}\right)dxdy.
\end{align*}
Apply H\"{o}lder inequality, Cauchy inequality and Lemma \ref{poincarebudengshi0201} to \(\widetilde{K}_{i}\) (\(i=1,2,3,4\)), we can get 
\begin{align*}
\begin{aligned}
&\left|\widetilde{K}_{1}\right|
\leq C\left\|\mathbf{V}\right\|_{L^{4}\left(\Omega\right)}^2\left\|\nabla\mathbf{V}_{t}\right\|_{L^{2}\left(\Omega\right)}^2+\varepsilon \left\|\nabla\mathbf{V}_{t}\right\|_{L^{2}\left(\Omega\right)}^2,~\left|\widetilde{K}_{3}\right|\leq C\left\|\nabla\mathbf{V}\right\|_{L^{2}\left(\Omega\right)}\left\|\nabla\mathbf{V}_{t}\right\|_{L^{2}\left(\Omega\right)}^{2}
\\
&\begin{aligned}
\left|\widetilde{K}_{2}\right|
\leq &C\bigg{[}\left\|\mathbf{V}\right\|_{L^{4}\left(\Omega\right)}\left\|\nabla\mathbf{V}\right\|_{L^{4}\left(\Omega\right)}^2\left\|\mathbf{V}_{t}\right\|_{L^{4}\left(\Omega\right)}
\\
&+\left\|\mathbf{V}\right\|_{L^{8}\left(\Omega\right)}^2\left\|\nabla^2\mathbf{V}\right\|_{L^2\left(\Omega\right)}\left\|\mathbf{V}_{t}\right\|_{L^{4}\left(\Omega\right)}
+\left\|\mathbf{V}\right\|_{L^{8}\left(\Omega\right)}^2\left\|\nabla\mathbf{V}\right\|_{L^{4}\left(\Omega\right)}\left\|\nabla\mathbf{V}_{t}\right\|_{L^{2}\left(\Omega\right)}
\bigg{]}
\\
&\leq C\left\|\nabla\mathbf{V}\right\|_{L^{2}\left(\Omega\right)}^{4}\left\|\nabla^2\mathbf{V}\right\|_{L^{2}\left(\Omega\right)}^2+\varepsilon\left\|\nabla\mathbf{V}_{t}\right\|_{L^{2}\left(\Omega\right)}^2,
\end{aligned}
\\
&\left|\widetilde{K}_{4}\right|
\leq C\left\|\mathbf{V}\right\|_{L^2(\Omega)}^{2}+\varepsilon\left\|\nabla\mathbf{V}_{t}\right\|_{L^{2}\left(\Omega\right)}^2,
\end{aligned}
\end{align*}
where Lemmas \ref{poincarebudengshi0201}-\ref{zuidazhi0201} are used. Since \(\mathcal{E}\left(t\right)\ll 1\), then from \eqref{hongceng0104}, we have 
\begin{align}\label{yonggan0104}
\frac{d}{dt}\int_{\Omega}
\left(\rho+\rho_{0}\right)\left|\mathbf{V}_{t}\right|^2dxdy 
+\left\|\nabla\mathbf{V}_{t}\right\|_{L^{2}\left(\Omega\right)}^2\leq C_{\delta_{0}}
\left\|\nabla^{2}\mathbf{V}\right\|_{L^{2}\left(\Omega\right)}^2+C\left\|\mathbf{V}\right\|_{L^{2}\left(\Omega\right)}^2,
\end{align}
where \(C_{\delta_{0}}>0\) is small enough.

In addition, by virtue of the Stokes' estimate and \(\mathcal{E}\left(t\right)\ll 1\), then from \(\eqref{raodong1219}_{2}\), we obtain 
\begin{align}\label{tianxiajiashi0104}
\begin{aligned}
\left\|\nabla^2\mathbf{V}\right\|_{L^{2}\left(\Omega\right)}^{2}
+\left\|\nabla P\right\|_{L^{2}\left(\Omega\right)}^2\leq C\left(\left\|\mathbf{V}_{t}\right\|_{L^{2}\left(\Omega\right)}^2+\left\|\rho\right\|_{L^{2}\left(\Omega\right)}^{2}\right)
\end{aligned}
\end{align}

From \eqref{yonggan0104} and \eqref{tianxiajiashi0104}, we have 
\[
\frac{d}{dt}\int_{\Omega}
\left(\rho+\rho_{0}\right)\left|\mathbf{V}_{t}\right|^2dxdy+\left\|\nabla^2\mathbf{V}\right\|_{L^{2}\left(\Omega\right)}^{2}
+\left\|\nabla\mathbf{V}_{t}\right\|_{L^{2}\left(\Omega\right)}^2\leq C\left\|\left(\mathbf{V},\rho\right)\right\|_{L^{2}\left(\Omega\right)}^2,
\]
which together with \eqref{zenmekeneng0104} yields that 
\begin{align}\label{zhenshiwoxiang0104}
\left\|\mathbf{V}_{t}\right\|_{L^{2}\left(\Omega\right)}^2+
\int_{0}^{t}\left\|\left(\nabla^2\mathbf{V},\nabla\mathbf{V}_{t}\right)\right\|_{L^{2}\left(\Omega\right)}^2ds\leq C\left(\widetilde{\mathcal{E}}_{0}^{2}+\int_{0}^{t}\left\|\left(\mathbf{V},\rho\right)\right\|_{L^2(\Omega)}^2ds\right).
\end{align}

From \eqref{meiyouwenti0104}, \eqref{tianxiajiashi0104} and \eqref{zhenshiwoxiang0104}, we can obtain the estimate desired. 

\end{proof}
\begin{proof}\textbf{The proof of Theorem \ref{hadamardyiyixia0202}.}
For convenience, we explain that the constants \(C_{i}\) (\(i=1,\cdots,15\)) in the following proof are independent of \(\delta^{*}\) in advance.
Choose \(\left(\mathbf{u}_{0},\theta_{0}\right)\) satisfies the relationship in Remark \ref{buweiling0202}. Let 
\[m_{0}=\min\left\{\left\|u_{01}\right\|_{L^1\left(\Omega\right)},\left\|u_{02}\right\|_{L^1\left(\Omega\right)},\left\|\theta_{0}\right\|_{L^1\left(\Omega\right)}\right\},\] then \(m_{0}>0\).
Furthermore, we can assume that 
\[
\left\|\mathbf{u}_{0}\right\|_{H^{2}\left(\Omega\right)}^2+\left\|\theta_{0}\right\|_{H^{1}\left(\Omega\right)}^2=1.
\]

Denote 
\[
\left(\mathbf{u}_{0}^{\delta^*},\theta_{0}^{\delta^*}\right)=\delta^{*}
\left(\mathbf{u}_{0},\theta_{0}\right),~C_{1}=\left\|\left(\mathbf{u}_{0},\theta_{0}\right)\right\|_{L^{2}\left(\Omega\right)},~C_{2}=\left\|\left(\mathbf{u}_{0},\theta_{0}\right)\right\|_{L^{1}\left(\Omega\right)},~\delta^{*}<\delta,
\]
where \(\delta>0\) is defined in Proposition \ref{feixiannengliang0104}, \(\left\|\left(\mathbf{u}_{0},\theta_{0}\right)\right\|_{L^{2}\left(\Omega\right)}:=\left\|\theta_{0}\right\|_{L^{2}\left(\Omega\right)}+\left\|\mathbf{u}_{0}\right\|_{L^{2}\left(\Omega\right)}\), 
\(\left\|\left(\mathbf{u}_{0},\theta_{0}\right)\right\|_{L^{1}\left(\Omega\right)}:=\left\|\theta_{0}\right\|_{L^{1}\left(\Omega\right)}+\left\|\mathbf{u}_{0}\right\|_{L^{1}\left(\Omega\right)}\). Since \(\delta^{*}\) is small enough and \(\rho_{0}>0\) indicating that \(\rho_{0}+\theta_{0}^{\delta^{*}}>0\), then according to Lemma \ref{shidingxingwenti0202}, we can assume that
\(\left(\mathbf{V}^{\delta^{*}},\rho^{\delta^{*}}\right)\) is the strong solution to the nonlinear system \eqref{raodong1219} with the initial value \(\left(\mathbf{u}_{0}^{\delta^{*}},\theta_{0}^{\delta^{*}}\right)\). From Lemma \ref{shidingxingwenti0202}, 
\[
\left(\mathbf{V}^{\delta^{*}},\rho^{\delta^{*}}\right)\in C\left(0,T;\mathbf{H}_{\text{div}}\times L^{2}\left(\Omega\right)\right).
\]
Define 
\begin{align*}
\begin{aligned}
T^{*}=\sup\left\{t\in (0,+\infty)|\sqrt{\left\|\mathbf{V}^{\delta^{*}}\left(s\right)\right\|_{H^{1}\left(\Omega\right)}^2+\left\|\rho^{\delta^{*}}\left(s\right)\right\|_{L^{2}\left(\Omega\right)}^2},~\forall~s\leq t\right\},
\\
T^{**}=\sup\left\{t\in (0,+\infty)|\left\|\left(\mathbf{V}^{\delta^{*}},\rho^{\delta^{*}}\right)\left(s\right)\right\|_{L^{2}\left(\Omega\right)}\leq 2\delta^{*}C_{1}e^{\Lambda s},~\forall~s\leq t \right\}.
\end{aligned}
\end{align*}
Since \(\delta^{*}<\delta\) and 
\(\left(\mathbf{V}^{\delta^{*}},\rho^{\delta^{*}}\right)\in C\left(0,T;\mathbf{H}_{\text{div}}\times L^{2}\left(\Omega\right)\right)\), then \(T^{*}T^{**}>0\). Furthermore, it is not difficult to verify that 
\begin{align}\label{wuquduoxiang0104}
\begin{aligned}
&\text{if}~T^{*}<\infty,~\sqrt{\left\|\mathbf{V}^{\delta^{*}}\left(T^{*}\right)\right\|_{H^{1}\left(\Omega\right)}^2+\left\|\rho^{\delta^{*}}\left(T^{*}\right)\right\|_{L^{2}\left(\Omega\right)}^2}=\delta_{0},
\\
&\text{if}~T^{**}<T_{\text{max}}, 
\left\|\left(\mathbf{V}^{\delta^{*}},\rho^{\delta^{*}}\right)\left(T^{**}\right)\right\|_{L^{2}\left(\Omega\right)}=2\delta^{*}C_{1}e^{\Lambda T^{**}}.
\end{aligned}
\end{align}
And from the Proposition \ref{feixiannengliang0104}, \(\forall ~t\leq \min\left\{T^{*},T^{**}\right\}\), we have 
\begin{align}\label{xuyaonini0210}
\begin{aligned}
\left\|\mathbf{V}^{\delta^{*}}\right\|_{H^{2}\left(\Omega\right)}^2
+\left\|\rho^{\delta^{*}}\right\|_{L^{2}\left(\Omega\right)}^2
+\left\|\mathbf{V}_{t}^{\delta^{*}}\right\|_{L^{2}\left(\Omega\right)}^2+\int_{0}^{t}\left\|\mathbf{V}^{\delta^{*}}_{s}\right\|_{H^{1}\left(\Omega\right)}^2ds
\leq C_{3}\left(\delta^{*}\right)^2 e^{2\Lambda t}.
\end{aligned}
\end{align}

Let \(\left(\mathbf{V}^{d},\rho^{d}\right)=\left(\mathbf{V}^{\delta^{*}},\rho^{\delta^{*}}\right)-\delta^{*}\left(e^{\Lambda t}\mathbf{u}_{0},e^{\Lambda t}\theta_{0}\right)=\left(\mathbf{V}^{\delta^{*}},\rho^{\delta^{*}}\right)-\left(\mathbf{V}^{a},\rho^{a}\right)\), where \(\left(\mathbf{V}^{\delta^{*}},\rho^{\delta^{*}}\right)\) and \(\left(\mathbf{V}^{a},\rho^{a}\right)\) are the strong solutions to the nonlinear system \eqref{raodong1219} with initial value \(\left(\mathbf{u}_{0}^{\delta^{*}},\theta_{0}^{\delta^{*}}\right)\) and to the linearized system \eqref{xianxing1219} with the same initial value, respectively. Then, we have 
\begin{align}\label{suizaiyi0104}
\begin{cases}
\rho_{t}^{a}+\mathbf{V}^{a}\cdot\nabla\rho_{0}=0,
\\
\rho_{0}\mathbf{V}_{t}^{a}+\nabla P^{a}-\mu\Delta\mathbf{V}^{a}+\rho^{a}\nabla f=0,
\\
\nabla\cdot\mathbf{V}^{a}=0,~
\mathbf{V}^{a}|_{\partial\Omega}=\mathbf{0},~\left(\mathbf{V}^{a},\rho^{a}\right)\left(0\right)=\left(\mathbf{u}_{0}^{\delta^{*}},\theta_{0}^{\delta^{*}}\right),
\end{cases}
\end{align} 
and
\begin{align}\label{suizaiyi0210}
\begin{cases}
\rho_{t}^{\delta^{*}}+\mathbf{V}^{\delta^{*}}\cdot\nabla\rho_{0}=-\mathbf{V}^{\delta^{*}}\cdot\nabla\rho^{\delta^{*}},
\\
\left(\rho^{\delta^{*}}+\rho_{0}\right)\mathbf{V}_{t}^{\delta^{*}}+\nabla P^{\delta^{*}}-\mu\Delta\mathbf{V}^{\delta^{*}}+\rho^{\delta^{*}}\nabla f=-\left(\rho^{\delta^{*}}+\rho_{0}\right)\mathbf{V}^{\delta^{*}}\cdot\nabla\mathbf{V}^{\delta^{*}},
\\
\nabla\cdot\mathbf{V}^{\delta^{*}}=0,~
\mathbf{V}^{\delta^{*}}|_{\partial\Omega}=\mathbf{0},~\left(\mathbf{V}^{\delta^{*}},\rho^{\delta^{*}}\right)\left(0\right)=\left(\mathbf{u}_{0}^{\delta^{*}},\theta_{0}^{\delta^{*}}\right).
\end{cases}
\end{align}
Let \(\widetilde{\rho}^{\delta^{*}}=\left(\rho^{\delta^{*}}+\rho_{0}\right)\), then \(\widetilde{\rho}^{\delta^{*}}>0\) since \(\rho_{0}+\delta^{*}\theta_{0}>0\). And from \eqref{suizaiyi0104} and \eqref{suizaiyi0210}, we can deduce 
\begin{align}\label{nijiushi0210}
\widetilde{\rho}^{\delta^{*}}
\mathbf{V}_{t}^{d}+\rho^{\delta^{*}}\mathbf{V}_{t}^{a}-\mu\Delta\mathbf{V}^{d}+\nabla P^{d}+\rho^{d}\nabla f=-\widetilde{\rho}^{\delta^{*}}
\mathbf{V}^{\delta^{*}}\cdot\nabla\mathbf{V}^{\delta^{*}}.
\end{align}
Differentiate \eqref{nijiushi0210} with the respect to \(t\), multiply the result by \(\mathbf{V}_{t}^{d}\) and integrate it over \(\Omega\), then by a series of calculation we obtain 
\begin{align}\label{xuyaonia0210}
\begin{aligned}
\frac{d}{dt}\left\|\sqrt{\widetilde{\rho}^{\delta^{*}}}\mathbf{V}_{t}^{d}\right\|_{L^2\left(\Omega\right)}^2+
2\mu\left\|\nabla\mathbf{V}_{t}^{d}\right\|_{L^2\left(\Omega\right)}^2-\frac{d}{dt}\int_{\Omega}h\left|\mathbf{V}^{d}\cdot\nabla f\right|^2dxdy=\sum\limits_{i=1}^{6}Z_{i},
\end{aligned}
\end{align}
where 
\begin{align*}
\begin{aligned}
&
Z_{1}=2\int_{\Omega}\mathbf{V}^{\delta^{*}}\cdot\nabla\rho^{\delta^{*}}\nabla f\cdot\mathbf{V}_{t}^{d}dxdy,~
Z_{2}=-2\int_{\Omega}\rho_{t}^{\delta^{*}}\mathbf{V}_{t}^{a}\cdot\mathbf{V}_{t}^{d}dxdy,
\\
&Z_{3}=-2\int_{\Omega}\rho^{\delta^{*}}\mathbf{V}_{tt}^{a}\cdot\mathbf{V}_{t}^{d}dxdy,~
Z_{4}=-2\int_{\Omega}\widetilde{\rho}^{\delta^{*}}_{t}\mathbf{V}^{\delta^{*}}\cdot\nabla\mathbf{V}^{\delta^{*}}\cdot\mathbf{V}_{t}^{d}dxdy,
\\
&Z_{5}=-2\int_{\Omega}\widetilde{\rho}^{\delta^{*}}
\mathbf{V}_{t}^{\delta^{*}}\cdot\nabla\mathbf{V}^{\delta^{*}}\cdot\mathbf{V}_{t}^{d}dxdy,~
Z_{6}=-4\int_{\Omega}\widetilde{\rho}^{\delta^{*}}
\mathbf{V}^{\delta^{*}}\cdot\nabla\mathbf{V}_{t}^{\delta^{*}}\cdot\mathbf{V}_{t}^{d}dxdy.
\end{aligned}
\end{align*}
Then utilize \eqref{xuyaonini0210} and \(\mathbf{V}^{d}=\mathbf{V}^{\delta^{*}}-\mathbf{V}^{a}\), we have the following estimates 
\begin{align}\label{yiyi0210}
\begin{aligned}
&\begin{aligned}
Z_{1}&\leq \left\|\mathbf{V}^{\delta^{*}}\right\|_{L^{\infty}\left(\Omega\right)}\left\|\rho^{\delta^{*}}\right\|_{L^2\left(\Omega\right)}
\left[\left\|\nabla^2 f\right\|_{L^{\infty}\left(\Omega\right)}
\left\|\mathbf{V}_{t}^{d}\right\|_{L^2\left(\Omega\right)}+\left\|\nabla f\right\|_{L^{\infty}\left(\Omega\right)}
\left\|\nabla\mathbf{V}_{t}^{d}\right\|_{L^2\left(\Omega\right)}\right]
\\
&\leq C_{4}\left[\left(\delta^{*}\right)^3
e^{3\Lambda t}+\delta^{*}e^{\Lambda t}\left\|\nabla\mathbf{V}_{t}^{\delta^{*}}\right\|_{L^2\left(\Omega\right)}^2
\right],
\end{aligned}
\\
&\begin{aligned}
Z_{2}&\leq \left\|\mathbf{V}^{\delta^{*}}\right\|_{L^{\infty}\left(\Omega\right)}\left\|\widetilde{\rho}^{\delta^{*}}\right\|_{L^\infty\left(\Omega\right)}
\left[\left\|\nabla\mathbf{V}_{t}^{a}\right\|_{L^{2}\left(\Omega\right)}
\left\|\mathbf{V}_{t}^{d}\right\|_{L^2\left(\Omega\right)}+\left\|\mathbf{V}_{t}^{a}\right\|_{L^{2}\left(\Omega\right)}
\left\|\nabla\mathbf{V}_{t}^{d}\right\|_{L^2\left(\Omega\right)}\right]
\\
&\leq C_{5}\left[\left(\delta^{*}\right)^3
e^{3\Lambda t}+\delta^{*}e^{\Lambda t}\left\|\nabla\mathbf{V}_{t}^{\delta^{*}}\right\|_{L^2\left(\Omega\right)}^2
\right],
\end{aligned}
\\
&\begin{aligned}
Z_{3}\leq C_{6}\left(\delta^{*}\right)^3e^{3\Lambda t},
\end{aligned}
\\
&\begin{aligned}
Z_{4}&\leq \left\|\mathbf{V}^{\delta^{*}}\right\|_{L^{\infty}}
\left\|\widetilde{\rho}^{\delta^{*}}\right\|_{L^{\infty}\left(\Omega\right)}\left\|\mathbf{V}^{\delta^{*}}\right\|_{H^2\left(\Omega\right)}^2\left[2
\left\|\mathbf{V}_{t}^{d}\right\|_{L^2\left(\Omega\right)}+\left\|\nabla\mathbf{V}_{t}^{d}\right\|_{L^2\left(\Omega\right)}
\right]
\\
&\leq C_{7}\left[\left(\delta^{*}\right)^3
e^{3\Lambda t}+\delta^{*}e^{\Lambda t}\left\|\nabla\mathbf{V}_{t}^{\delta^{*}}\right\|_{L^2\left(\Omega\right)}^2+\left(\delta^{*}\right)^4
e^{4\Lambda t}
\right],
\end{aligned}
\end{aligned}
\end{align}
\begin{align}\label{yiyi0310}
\begin{aligned}
&\begin{aligned}
Z_{5}&\leq \left\|\widetilde{\rho}^{\delta^{*}}\right\|_{L^{\infty}\left(\Omega\right)}\left\|\mathbf{V}_{t}^{\delta^{*}}\right\|_{L^2\left(\Omega\right)}\left\|\mathbf{V}^{\delta^{*}}\right\|_{H^2\left(\Omega\right)}\left\|\nabla\mathbf{V}_{t}^{d}\right\|_{L^2\left(\Omega\right)}
\\
&\leq C_{8}\left[\left(\delta^{*}\right)^3
e^{3\Lambda t}+\delta^{*}e^{\Lambda t}\left\|\nabla\mathbf{V}_{t}^{\delta^{*}}\right\|_{L^2\left(\Omega\right)}^2
\right],
\end{aligned}
\\
&\begin{aligned}
Z_{6}&\leq \left\|\widetilde{\rho}^{\delta^{*}}\right\|_{L^{\infty}\left(\Omega\right)}\left\|\mathbf{V}^{\delta^{*}}\right\|_{L^{\infty}
\left(\Omega\right)}\left\|\nabla\mathbf{V}_{t}^{\delta^{*}}\right\|_{L^2\left(\Omega\right)}
\left\|\mathbf{V}_{t}^{d}\right\|_{L^2\left(\Omega\right)}
\\
&\leq C_{9}\left[\left(\delta^{*}\right)^3
e^{3\Lambda t}+\delta^{*}e^{\Lambda t}\left\|\nabla\mathbf{V}_{t}^{\delta^{*}}\right\|_{L^2\left(\Omega\right)}^2
\right].
\end{aligned}
\end{aligned}
\end{align}
Furthermore, multiply \eqref{nijiushi0210} by \(\mathbf{V}_{t}^{d}\) and integrate the result over \(\Omega\), utilize \(\mathbf{V}^{d}\left(0\right)=\mathbf{0}\) and \(\rho^{d}\left(0\right)=0\), one can deduce that 
\begin{align}\label{chushizhi0210}
\left\|\sqrt{\widetilde{\rho}^{\delta^{*}}}\mathbf{V}_{t}^{d}\right\|_{L^2\left(\Omega\right)}^2|_{t=0}\leq C_{10}\left(\delta^{*}\right)^3.
\end{align}
Put \eqref{yiyi0210} and \eqref{yiyi0310} into \eqref{xuyaonia0210} and integrate the result from \(0\) to \(t\), utilize the conditions \eqref{yiyi0210}, \eqref{chushizhi0210},\(\mathbf{V}^{d}\left(0\right)=\mathbf{0}\) and \(\rho^{d}\left(0\right)=0\), we have 
\begin{align}\label{ali0210}
\begin{aligned}
\left\|\sqrt{\widetilde{\rho}^{\delta^{*}}}\mathbf{V}_{t}^{d}\right\|_{L^2\left(\Omega\right)}^2
&+2\mu\int_{0}^{t}\left\|\nabla\mathbf{V}_{s}^{d}\right\|_{L^2\left(\Omega\right)}^2 ds-\int_{\Omega}h\left|\mathbf{V}^{d}\cdot\nabla f\right|^2dxdy
\\
&\leq C_{11}\left(\delta^{*}\right)^3
e^{3\Lambda t}\left(1+\delta^{*}e^{\Lambda t}\right).
\end{aligned}
\end{align}

In addition, from Remark \eqref{xuyaode0210}, we have 
\begin{align*}
\begin{aligned}
-\int_{\Omega}h\left|\mathbf{V}^{d}\cdot\nabla f\right|^2dxdy 
\geq -\Lambda\mu\left\|\nabla\mathbf{V}^{d}\right\|_{L^2\left(\Omega\right)}^2-\Lambda^2 \left\|\sqrt{\rho_{0}}\mathbf{V}^{d}\right\|_{L^2\left(\Omega\right)}^2,
\end{aligned}
\end{align*}
which together with \eqref{ali0210} and \(\widetilde{\rho}^{\delta^{*}}= \rho_{0}+\rho^{\delta^{*}}\) yields that 
\begin{align}\label{dada0210}
\begin{aligned}
\left\|\sqrt{\rho_{0}}\mathbf{V}_{t}^{d}\right\|_{L^2\left(\Omega\right)}^2
&+2\mu\int_{0}^{t}\left\|\nabla\mathbf{V}_{s}^{d}\right\|_{L^2\left(\Omega\right)}^2 ds-\Lambda\mu\left\|\nabla\mathbf{V}^{d}\right\|_{L^2\left(\Omega\right)}^2-\Lambda^2 \left\|\sqrt{\rho_{0}}\mathbf{V}^{d}\right\|_{L^2\left(\Omega\right)}^2
\\
&\leq C_{12}\delta^{*}
e^{\Lambda t}\left(\left(\delta^{*}\right)^2
e^{2\Lambda t}+\left\|\nabla\mathbf{V}_{t}^{d}\right\|_{L^2\left(\Omega\right)}^2+\left(\delta^{*}\right)^3e^{3\Lambda t}\right).
\end{aligned}
\end{align}
A simple computation with Cauchy inequality shows that 
\begin{align*}
\begin{aligned}
&\Lambda \frac{d}{dt}\left\|\sqrt{\rho_{0}}\mathbf{V}^{d}\right\|_{L^2\left(\Omega\right)}^2=2\Lambda\int_{\Omega}
\rho_{0}\mathbf{V}_{t}^{d}\cdot\mathbf{V}^{d}dxdy\leq \Lambda^2\left\|\sqrt{\rho_{0}}\mathbf{V}^{d}\right\|_{L^2\left(\Omega\right)}^2+\left\|\sqrt{\rho_{0}}\mathbf{V}^{d}_{t}\right\|_{L^2\left(\Omega\right)}^2,
\\
&\mu\Lambda\left\|\nabla\mathbf{V}^{d}\right\|_{L^{2}\left(\Omega\right)}^2
=\mu\Lambda\int_{0}^{t}\frac{d}{ds}\left\|\nabla\mathbf{V}^{d}\right\|_{L^2\left(\Omega\right)}^2ds\leq \mu\Lambda^2\int_{0}^{t}
\left\|\nabla\mathbf{V}^{d}\right\|_{L^2\left(\Omega\right)}^2ds+\mu\int_{0}^{t}\left\|\nabla\mathbf{V}_{s}^{d}\right\|_{L^2\left(\Omega\right)}^2ds,
\end{aligned}
\end{align*}
Thus, from \eqref{dada0210}, we have 
\begin{align*}
\begin{aligned}
\mu\Lambda \left\|\nabla\mathbf{V}^{d}\right\|_{L^2\left(\Omega\right)}^2+\Lambda\frac{d}{dt}
\left\|\sqrt{\rho_{0}}\mathbf{V}^{d}\right\|_{L^2\left(\Omega\right)}^2
\leq &C_{12}\delta^{*}
e^{\Lambda t}\left(\left(\delta^{*}\right)^2
e^{2\Lambda t}+\left\|\nabla\mathbf{V}_{t}^{d}\right\|_{L^2\left(\Omega\right)}^2+\left(\delta^{*}\right)^3e^{3\Lambda t}\right)
\\
&+2\Lambda^2\left\|\sqrt{\rho_{0}}\mathbf{V}^{d}\right\|_{L^2\left(\Omega\right)}^2+2\mu\Lambda^2\int_{0}^{t}\left\|\nabla\mathbf{V}^{d}\right\|_{L^2\left(\Omega\right)}^2ds,
\end{aligned}
\end{align*}
which together with Gronwall's inequality, \eqref{xuyaonini0210} and \(\mathbf{V}^{d}\left(0\right)=\mathbf{0}\) yields that 
\begin{align}\label{houmian0210}
\left\|\mathbf{V}^{d}\right\|_{L^2\left(\Omega\right)}\leq C_{13}\left[\left(\delta^{*}\right)^{\frac{3}{2}}e^{\frac{3\Lambda}{2}t}+\left(\delta^{*}\right)^2e^{2\Lambda t}\right].
\end{align}
From \(\eqref{suizaiyi0104}_{1}\) and \(\eqref{suizaiyi0210}_{1}\), we have 
\begin{align}\label{henshuai0210}
\rho_{t}^{d}=-\mathbf{V}^{d}\cdot\nabla\rho_{0}
-\mathbf{V}^{\delta^{*}}\cdot\nabla \rho^{d}-\mathbf{V}^{\delta^{*}}\nabla \rho^{a}.
\end{align}
Multiply \eqref{henshuai0210} by \(\rho^{d}\), integrate the result over \(\Omega\) and from \eqref{houmian0210} and \(\rho^{d}\left(0\right)=0\), we can obtain
\begin{align*}
\left\|\rho^{d}\right\|_{L^2\left(\Omega\right)}
\leq C_{14}\left[\left(\delta^{*}\right)^{\frac{3}{2}}e^{\frac{3\Lambda}{2}t}+\left(\delta^{*}\right)^2e^{2\Lambda t}\right].
\end{align*}
Thus, we have 
\begin{align}\label{pingtai0104}
\left\|\left(\mathbf{V}^{d},\rho^{d}\right)\right\|_{L^2\left(\Omega\right)}\leq C_{15}\left[\left(\delta^{*}\right)^{\frac{3}{2}}e^{\frac{3\Lambda}{2}t}+\left(\delta^{*}\right)^2e^{2\Lambda t}\right].
\end{align}

Let \(\varepsilon_{0}=\min\left\{\frac{\delta_{0}}{2\sqrt{C_{3}}}, \xi,\frac{m_{0}^2}{32\left|\Omega\right|C_{15}^{2}}\right\}\), where 
\(\left(2\xi\right)^{\frac{1}{2}}+2\xi=\frac{C_{1}}{2C_{15}}\), and \(T^{\delta^{*}}=\frac{\ln{\frac{2\varepsilon_{0}}{\delta^{*}}}}{\Lambda}\). Next, we show that \(T^{\delta^{*}}=\min\bigg{\{}T^{\delta^{*}},T^{*},T^{**}\bigg{\}}\). Indeed, 
if \(T^{*}=\min\left\{T^{\delta^{*}},T^{*},T^{**}\right\}\), then \(T^{*}<\infty\). Thus, 
\[
\sqrt{\left\|\mathbf{V}^{\delta^{*}}\left(T^{*}\right)\right\|_{H^{1}\left(\Omega\right)}^2+\left\|\rho^{\delta^{*}}\left(T^{*}\right)\right\|_{L^{2}\left(\Omega\right)}^2}\leq \sqrt{C_{3}}\delta^{*}e^{\Lambda T^{*}}\leq \sqrt{C_{3}}\delta^{*}e^{\Lambda T^{\delta^{*}}}=2\varepsilon_{0}\sqrt{C_{3}}<\delta_{0},
\]
which contradicts \(\eqref{wuquduoxiang0104}_{1}\); 
if \(T^{**}=\min\left\{T^{\delta^{*}},T^{*},T^{**}\right\}\), then \(T^{**}<T_{\text{max}}\). Thus, 
\begin{align*}
\begin{aligned}
\left\|\left(\mathbf{V}^{\delta^{*}},\rho^{\delta^{*}}\right)\left(T^{**}\right)\right\|_{L^{2}\left(\Omega\right)}&\leq \delta^{*}\left\|\left(\mathbf{V}^{l},\rho^{l}\right)\right\|_{L^2\left(\Omega\right)}+\left\|\left(\mathbf{V}^{d},\rho^{d}\right)\right\|_{L^2\left(\Omega\right)}
\\
&\leq \delta^{*}C_{1}e^{\Lambda T^{**}}+C_{15}\left[\left(\delta^{*}\right)^{\frac{3}{2}}e^{\frac{3}{2}\Lambda T^{**}}+\left(\delta^{*}\right)^{2}e^{2\Lambda T^{**}}\right]
\\
&\leq 
\delta^{*}e^{\Lambda T^{**}}\left(C_{1}+C_{15}\left(2\varepsilon_{0}\right)^{\frac{1}{2}}+2C_{15}\varepsilon_{0}\right)<2\delta^{*}C_{1}e^{\Lambda T^{**}},
\end{aligned}
\end{align*}
which is a contradiction to \(\eqref{wuquduoxiang0104}_{2}\). Thus, \(T^{\delta^{*}}=\min\left\{T^{\delta^{*}},T^{*},T^{**}\right\}\). 

By a direct computation with \eqref{pingtai0104}, we have 
\begin{align*}
\begin{aligned}
\left\|V_{2}^{\delta^{*}}\left(T^{\delta^{*}}\right)\right\|_{L^{1}\left(\Omega\right)}&\geq \delta^{*}
\left\|V_{2}^{l}\left(T^{\delta^{*}}\right)\right\|_{L^{1}\left(\Omega\right)}-\left\|V_{2}^{d}\left(T^{\delta^{*}}\right)\right\|_{L^{1}\left(\Omega\right)}
\\
&\geq \delta^{*}e^{\Lambda T^{\delta^{*}}}m_{0}-\left|\Omega\right|^{\frac{1}{2}}
\left\|V_{2}^{d}\left(T^{\delta^{*}}\right)\right\|_{L^{2}\left(\Omega\right)}
\\
&\geq 2\varepsilon_{0}m_{0}
-\left|\Omega\right|^{\frac{1}{2}}
C_{15}\left[\left(\delta^{*}\right)^{\frac{3}{2}}e^{\frac{3\Lambda T^{\delta^{*}}}{2}}
+\left(\delta^{*}\right)^{2}e^{2\Lambda T^{\delta^{*}}}\right]
\\
&\geq 2\varepsilon_{0} (m_{0}-2\left|\Omega\right|^{\frac{1}{2}}C_{15}\left(2\varepsilon_{0}\right)^{\frac{1}{2}})\geq m_{0}\varepsilon_{0}.
\end{aligned}
\end{align*}
Similarly, one can verify the same conclusion for \(\rho^{\delta^{*}}\) and \(V_{1}^{\delta^{*}}\).
\end{proof}
\subsection{Proof of the Theorem \ref{wendingxing0225}}\label{wendingxing0226}
The section presents the detailed analysis of the linear and nonlinear stability.
\begin{proof}\textbf{The proof of Theorem \ref{wendingxing0225}.}

(1) One can refer to \cite{Choe2003} for the global well-posedness of the linearized system \eqref{xianxing1219}. Since \(h<0\) and \(\nabla\rho_{0}=h\nabla f\), we obtain 
\begin{align}\label{xianxing0226}
\begin{cases}
\frac{\rho_{t}}{-h}=\mathbf{V}\cdot\nabla f,
\\
\rho_{0}\mathbf{V}_{t}+\nabla P=\mu\Delta\mathbf{V}-\rho\nabla f,
\\
\nabla\cdot\mathbf{V}=0,~\mathbf{V}|_{\partial\Omega}=\mathbf{0}.
\end{cases}
\end{align}
Multiply \(\eqref{xianxing0226}_{1}\) and \(\eqref{xianxing0226}_{2}\) by \(\rho\) and \(\mathbf{V}\), respectively, then integrate over \(\Omega\) and add up the results, we have 
\begin{align*}
\frac{d}{dt}\int_{\Omega}\left[
\frac{\rho^2}{-h}+\rho_{0}\mathbf{V}^2
\right]dxdy+2\mu\left\|\nabla\mathbf{V}\right\|_{L^2\left(\Omega\right)}^2=0,
\end{align*}
which deduces that 
\begin{align}\label{xiawu0226}
\left\|\rho\right\|_{L^2\left(\Omega\right)}^2
+\left\|\mathbf{V}\right\|_{L^2\left(\Omega\right)}^2+\int_{0}^{t}\left\|\nabla\mathbf{V}\left(s\right)\right\|_{L^2\left(\Omega\right)}^2ds\leq C\left(\left\|\rho\left(0\right)\right\|_{L^2\left(\Omega\right)}^2+\left\|\mathbf{V}\left(0\right)\right\|_{L^2\left(\Omega\right)}^2\right),
\end{align}
where \(C=C\left(h,\rho_{0},\mu\right)>0\) is independent of \(t\).

Multiply \(\eqref{xianxing0226}_{2}\) by \(\mathbf{V}_{t}\) and integrate over \(\Omega\), then utilize the Cauchy inequality, we can obtain 
\begin{align}\label{fangqi0226}
\lim\limits_{t\rightarrow 0^{+}}
\int_{\Omega}\rho_{0}\mathbf{V}_{t}^2 dxdy 
\leq C\left(\left\|\rho\left(0\right)\right\|_{L^2\left(\Omega\right)}^2+\left\|\mathbf{V}\left(0\right)\right\|_{H^2\left(\Omega\right)}^2\right),
\end{align}
where \(C=C\left(\mu,f,\rho_{0}\right)>0\) is independent of \(t\).

Differentiating \(\eqref{xianxing0226}_{2}\) with respect to \(t\) and utilizing \(\eqref{xianxing0226}_{1}\), multiplying the result by \(\mathbf{V}_{t}\) then integrating it over \(\Omega\) give that 
\begin{align*}
\frac{d}{dt}\int_{\Omega}\left[
\rho_{0}\mathbf{V}_{t}^2-h\left|\mathbf{V}\cdot\nabla f\right|^2
\right]dxdy+2\mu\left\|\nabla\mathbf{V}_{t}\right\|_{L^2\left(\Omega\right)}^2=0,
\end{align*}
which together with \eqref{fangqi0226} yields that 
\begin{align}\label{bufangqi0226}
\left\|\mathbf{V}_{t}\right\|_{L^2\left(\Omega\right)}^2+\int_{0}^{t}\left\|\nabla\mathbf{V}_{s}\right\|_{L^2\left(\Omega\right)}^2ds
\leq C\left(\left\|\rho\left(0\right)\right\|_{L^2\left(\Omega\right)}^2+\left\|\mathbf{V}\left(0\right)\right\|_{H^2\left(\Omega\right)}^2\right),
\end{align}
where \(C=C\left(\mu,f,\rho_{0},h\right)>0\) is independent of \(t\).

Utilize \eqref{xiawu0226}, \eqref{bufangqi0226}, we apply Stokes' estimate to \(\eqref{xianxing0226}_{2}\) and get 
\begin{align}\label{guji0226}
\left\|\nabla^2\mathbf{V}\right\|_{L^2\left(\Omega\right)}^2
+\left\|\nabla P\right\|_{L^2\left(\Omega\right)}^2
\leq C\left(\left\|\rho\left(0\right)\right\|_{L^2\left(\Omega\right)}^2+\left\|\mathbf{V}\left(0\right)\right\|_{H^2\left(\Omega\right)}^2\right),
\end{align}
where \(C=C\left(\mu,f,\rho_{0},h,\Omega\right)>0\) is independent of \(t\).

From the Lemma \ref{feichangzhongyao0226}, we also have 
\begin{align}\label{jingweitianren0226}
\left\|\nabla \mathbf{V}\right\|_{L^2\left(\Omega\right)}^2
\leq C\left(\left\|\rho\left(0\right)\right\|_{L^2\left(\Omega\right)}^2+\left\|\mathbf{V}\left(0\right)\right\|_{H^2\left(\Omega\right)}^2\right),
\end{align}
where \(C=C\left(\mu,f,\rho_{0},h,\Omega\right)>0\) is independent of \(t\).

From \eqref{xiawu0226}, \eqref{bufangqi0226}, \eqref{guji0226} and 
\eqref{jingweitianren0226}, we can conclude \eqref{xuyaode0226}.

Furthermore, from \(\int_{0}^{t}
\left\|\nabla\mathbf{V}\right\|_{L^2\left(\Omega\right)}^2ds<+\infty\) for any \(t>0\) and
\(\left\|\mathbf{V}\right\|_{H^1\left(\Omega\right)}\) is continuous with respect to \(t\)\cite{Choe2003}, then the \eqref{bijin0226} holds.

(2)From the theorem 4 in \cite{Choe2003}, if \(\left\|\nabla\mathbf{V}\right\|_{L^2\left(\Omega\right)}<+\infty\) for any \(t>0\), then 
the maximum existence time of local strong solution to the nonlinear system \eqref{raodong1219} can be positive infinite.
In addition, from \cite{Choe2003}, we can conclude that 
\begin{align}\label{xu0226}
\inf\limits_{\left(x,y\right)\in\Omega}\left\{\rho+\rho_{0}\right\}=\inf\limits_{\left(x,y\right)\in\Omega}\left\{\rho\left(0\right)+\rho_{0}\right\}
,~ 
\sup\limits_{\left(x,y\right)\in\Omega}\left\{\rho+\rho_{0}\right\}=\sup\limits_{\left(x,y\right)\in\Omega}\left\{\rho\left(0\right)+\rho_{0}\right\}.
\end{align}
Since \(h\equiv \text{const}<0\) and \(\nabla\rho_{0}=h\nabla f\), we have 
\begin{align}\label{raodong0226}
\begin{cases}
\frac{\rho_{t}}{-h}+\frac{\mathbf{V}\cdot\nabla\rho}{-h}=\mathbf{V}\cdot\nabla f,
\\
\left(\rho+\rho_{0}\right)\mathbf{V}_{t}+\left(\rho+\rho_{0}\right)\mathbf{V}\cdot\nabla\mathbf{V}
+\nabla P=\mu\Delta\mathbf{V}-\rho\nabla f,
\\
\nabla\cdot \mathbf{V}=0,~
\mathbf{V}|_{\partial\Omega}=\mathbf{0}.
\end{cases}
\end{align}
Multiply \(\eqref{raodong0226}_{1}\) and \(\eqref{raodong0226}_{2}\) by \(\rho\) and \(\mathbf{V}\), respectively, integrate the results over \(\Omega\) and add up them, we have 
\begin{align}\label{xuyao1110226}
\frac{d}{dt}\int_{\Omega}
\left[\frac{\rho^2}{-h}+\left(\rho+\rho_{0}\right)\mathbf{V}^2\right]dxdy+2\mu\left\|\nabla\mathbf{V}\right\|_{L^2\left(\Omega\right)}^2
=0,
\end{align}
which deduces that 
\begin{align}\label{genghao0226}
\left\|\rho\right\|_{L^2\left(\Omega\right)}^2
+\left\|\mathbf{V}\right\|_{L^2\left(\Omega\right)}^2+\int_{0}^{t}\left\|\nabla\mathbf{V}\right\|_{L^2\left(\Omega\right)}^2ds
\leq C\left(\left\|\rho\left(0\right)\right\|_{L^2\left(\Omega\right)}^2+\left\|\mathbf{V}\left(0\right)\right\|_{L^2\left(\Omega\right)}^2\right),
\end{align}
where \(C=C\left(h,\mu,\sigma,K\right)>0\) is independent of \(t\). We can conclude that if \(\left\|\rho\left(0\right)\right\|_{L^2\left(\Omega\right)}^2+\left\|\mathbf{V}\left(0\right)\right\|_{H^2\left(\Omega\right)}^2\) is sufficient small, then \(\left\|\rho\right\|_{L^2\left(\Omega\right)}^2
+\left\|\mathbf{V}\right\|_{L^2\left(\Omega\right)}^2+\int_{0}^{t}\left\|\nabla\mathbf{V}\right\|_{L^2\left(\Omega\right)}^2ds\) is sufficient small for any \(t>0\).

Apply Stokes' estimate to \(\eqref{raodong0226}_{2}\), we obtain 
\begin{align}\label{jiushi0226}
\begin{aligned}
\left\|\nabla^2\mathbf{V}\right\|_{L^2\left(\Omega\right)}^2+&\left\|\nabla P\right\|_{L^2\left(\Omega\right)}^2
\leq C\left(K,\mu,\Omega,f\right)
\left[\left\|\mathbf{V}_{t}\right\|_{L^2\left(\Omega\right)}^2+\left\|\mathbf{V}\cdot\nabla \mathbf{V}\right\|_{L^2\left(\Omega\right)}^2+\left\|\rho\right\|_{L^2\left(\Omega\right)}^2\right]
\\
&\leq C\left(K,\mu,\Omega,f\right)
\left[\left\|\mathbf{V}_{t}\right\|_{L^2\left(\Omega\right)}^2+\left\|\mathbf{V}\right\|_{L^\infty\left(\Omega\right)}^2\left\|\nabla \mathbf{V}\right\|_{L^2\left(\Omega\right)}^2+\left\|\rho\right\|_{L^2\left(\Omega\right)}^2\right]
\\
&\leq C\left(K,\mu,\Omega,f\right)
\left[\left\|\mathbf{V}_{t}\right\|_{L^2\left(\Omega\right)}^2+\left\|\mathbf{V}\right\|_{L^2\left(\Omega\right)}^2\left\|\nabla^2 \mathbf{V}\right\|_{L^2\left(\Omega\right)}^2+\left\|\rho\right\|_{L^2\left(\Omega\right)}^2\right],
\end{aligned}
\end{align}
where the Lemmas \ref{zuidazhi0201} and \ref{feichangzhongyao0226} are used. Since \(\left\|\mathbf{V}\right\|_{L^2\left(\Omega\right)}^2\) is sufficient small, from \eqref{jiushi0226}, we can obtain 
\begin{align}\label{xi0226}
\left\|\nabla^2\mathbf{V}\right\|_{L^2\left(\Omega\right)}^2+\left\|\nabla P\right\|_{L^2\left(\Omega\right)}^2\leq C\left(K,\mu,\Omega,f,\hat{\delta}\right)
\left[\left\|\mathbf{V}_{t}\right\|_{L^2\left(\Omega\right)}^2+\left\|\rho\right\|_{L^2\left(\Omega\right)}^2\right],
\end{align}
where \(C\left(K,\mu,\Omega,f,\hat{\delta}\right)>0\) is independent of \(t\). 
Multiply \(\eqref{raodong0226}_{2}\) by \(\mathbf{V}_{t}\) and integrate over \(\Omega\), we obtain 
\begin{align*}
\begin{aligned}
\frac{\mu}{2}\frac{d}{dt}\left\|\nabla\mathbf{V}\right\|_{L^2\left(\Omega\right)}^2&+\int_{\Omega}\left(\rho+\rho_{0}\right)\mathbf{V}_{t}^2dxdy 
=-\int_{\Omega}\left(\rho+\rho_{0}\right)\mathbf{V}\cdot\nabla\mathbf{V}\cdot\mathbf{V}_{t}dxdy-\int_{\Omega}\rho\cdot\nabla f\cdot\mathbf{V}_{t}dxdy,
\\
&\leq C\left(K,f\right)
\left(\left\|\mathbf{V}\right\|_{L^{\infty}\left(\Omega\right)}^2\left\|\nabla\mathbf{V}\right\|_{L^2\left(\Omega\right)}^2
+\left\|\rho\right\|_{L^2\left(\Omega\right)}^2
\right)+\varepsilon\int_{\Omega}\left(\rho+\rho_{0}\right)\mathbf{V}_{t}^2dxdy,
\end{aligned}
\end{align*}
where the Cauchy inequality is used and \( C\left(K,f\right)>0\) is independent of \(t\). Thus, we have 
\begin{align*}
\begin{aligned}
\frac{d}{dt}\left\|\nabla\mathbf{V}\right\|_{L^2\left(\Omega\right)}^2+\left\|\mathbf{V}_{t}\right\|_{L^2\left(\Omega\right)}^2 \leq C\left(K,\sigma,\mu,f,\hat{\delta}\right)
\left(\left\|\mathbf{V}\right\|_{L^{2}\left(\Omega\right)}^2\left\|\nabla^2\mathbf{V}\right\|_{L^2\left(\Omega\right)}^2
+\left\|\rho\right\|_{L^2\left(\Omega\right)}^2
\right),
\end{aligned}
\end{align*}
where  \(C\left(K,\mu,\Omega,f,\hat{\delta},\sigma\right)>0\) is independent of \(t\) and
which together with \eqref{xi0226}
yields that 
\begin{align*}
\frac{d}{dt}\left\|\nabla\mathbf{V}\right\|_{L^2\left(\Omega\right)}^2
\leq C\left(\mu,\Omega,f,K,\sigma,\hat{\delta}\right)\left\|\rho\right\|_{L^2\left(\Omega\right)}^2,
\end{align*}
where \(C\left(K,\mu,\Omega,f,\hat{\delta},\sigma\right)>0\) is independent of \(t\) and 
which implies that 
\[
\left\|\nabla\mathbf{V}\right\|_{L^2\left(\Omega\right)}^2<+\infty,~\text{for~any~}t>0.
\]
Thus, the global existence of the strong solution to the nonlinear system \eqref{raodong1219} is verified. And from \eqref{genghao0226}, we can obtain 
\begin{align}\label{jiasan0310}
\left\|\mathbf{V}\right\|_{H^{1}\left(\Omega\right)}\rightarrow 0,~\text{as~}t\rightarrow+\infty.
\end{align}
Without loss of generality, we can assume that \(\left\|\mathbf{V}\right\|_{H^{1}\left(\Omega\right)}\) is sufficient small for any \(t>0\).

Multiply \(\eqref{raodong0226}_{2}\) by \(\mathbf{V}_{t}\) and integrate over \(\Omega\), then utilize the Cauchy inequality, we can obtain 
\begin{align}\label{fangqi110226}
\lim\limits_{t\rightarrow 0^{+}}
\int_{\Omega}\left(\rho_{0}+\rho\right)\mathbf{V}_{t}^2 dxdy 
\leq C\left(\left\|\rho\left(0\right)\right\|_{L^2\left(\Omega\right)}^2+\left\|\mathbf{V}\left(0\right)\right\|_{H^2\left(\Omega\right)}^2\right),
\end{align}
where \(C=C\left(\mu,f,\rho_{0},K\right)>0\) is independent of \(t\).

Differentiate \(\eqref{raodong0226}_{2}\) with respect to \(t\), multiply this result by \(\mathbf{V}_{t}\) and integrate it over \(\Omega\), we obtain 
\begin{align}\label{xuyao02}
\frac{d}{dt}\int_{\Omega}
\left[\left(\rho+\rho_{0}\right)\mathbf{V}_{t}^2-h\left|\mathbf{V}\cdot\nabla f\right|^2
\right]dxdy+2\mu\left\|\nabla\mathbf{V}_{t}\right\|_{L^2\left(\Omega\right)}^2=\sum\limits_{i=1}^{4}\hat{K}_{i},
\end{align}
where 
\begin{align*}
\begin{aligned}
&\hat{K}_{1}=-4\int_{\Omega}\left(\rho+\rho_{0}\right)\mathbf{V}\cdot\nabla\mathbf{V}_{t}\cdot\mathbf{V}_{t}dxdy,~\hat{K}_{2}=-2\int_{\Omega}\left(\rho+\rho_{0}\right)\mathbf{V}\cdot\nabla\left(\mathbf{V}\cdot\nabla\mathbf{V}\cdot\mathbf{V}_{t}\right)dxdy,
\\
&~~~~~~~~~~\hat{K}_{3}=-2\int_{\Omega}\left(\rho+\rho_{0}\right)\mathbf{V}_{t}\cdot\nabla\mathbf{V}\cdot\mathbf{V}_{t}dxdy,~
\hat{K}_{4}=-2\int_{\Omega}\rho\mathbf{V}\cdot\nabla\left(\nabla f\cdot\mathbf{V}_{t}\right)dxdy.
\end{aligned}
\end{align*}
Then a direct computation gives that 
\begin{align*}
&\hat{K}_{1},\hat{K}_{3}\leq C\left(K,\Omega\right)\left\|\nabla\mathbf{V}\right\|_{L^{2}\left(\Omega\right)}\left\|\nabla\mathbf{V}_{t}\right\|_{L^2\left(\Omega\right)}^2,
\\
&\hat{K}_{2}\leq C\left(K,\Omega\right)
\left\|\nabla\mathbf{V}\right\|_{L^2\left(\Omega\right)}^4\left\|\nabla^2\mathbf{V}\right\|_{L^2\left(\Omega\right)}^2+\varepsilon\left\|\nabla\mathbf{V}_{t}\right\|_{L^2\left(\Omega\right)}^2,
\\
&\hat{K}_{4}\leq C\left(f,\Omega\right)
\left\|\rho\right\|_{L^2\left(\Omega\right)}^{\frac{4}{3}}\left\|\mathbf{V}\right\|_{L^2\left(\Omega\right)}^2
\left\|\nabla^2\mathbf{V}\right\|_{L^2\left(\Omega\right)}^2+\varepsilon\left\|\nabla\mathbf{V}_{t}\right\|_{L^2\left(\Omega\right)}^2.
\end{align*}
Thus, from \eqref{xuyao02} and \(\left\|\mathbf{V}\right\|_{H^{1}\left(\Omega\right)}\) is sufficient small, we have
\begin{align*}
\begin{aligned}
\frac{d}{dt}\int_{\Omega}
\left[\left(\rho+\rho_{0}\right)\mathbf{V}_{t}^2-h\left|\mathbf{V}\cdot\nabla f\right|^2
\right]dxdy+\left\|\nabla\mathbf{V}_{t}\right\|_{L^2\left(\Omega\right)}^2
&\leq C
\left\|\rho\right\|_{L^2\left(\Omega\right)}^{\frac{4}{3}}\left\|\mathbf{V}\right\|_{L^2\left(\Omega\right)}^2
\left\|\nabla^2\mathbf{V}\right\|_{L^2\left(\Omega\right)}^2
\\
&+
C\left\|\nabla\mathbf{V}\right\|_{L^2\left(\Omega\right)}^4\left\|\nabla^2\mathbf{V}\right\|_{L^2\left(\Omega\right)}^2,
\end{aligned}
\end{align*}
where \(C=C\left(\mu,\Omega,f,K,\hat{\delta}\right)>0\) is independent of t and
which together with \eqref{xi0226} yields that
\begin{align*}
\frac{d}{dt}\int_{\Omega}
\left[\left(\rho+\rho_{0}\right)\mathbf{V}_{t}^2-h\left|\mathbf{V}\cdot\nabla f\right|^2
\right]dxdy&+\left\|\nabla\mathbf{V}_{t}\right\|_{L^2\left(\Omega\right)}^2
\leq 
C\bigg{[}\left\|\rho\right\|_{L^2\left(\Omega\right)}^{\frac{4}{3}}\left\|\mathbf{V}\right\|_{L^2\left(\Omega\right)}^2\left\|\mathbf{V}_{t}\right\|_{L^2\left(\Omega\right)}^2
\\
&+\left\|\rho\right\|_{L^2\left(\Omega\right)}^{\frac{10}{3}}\left\|\mathbf{V}\right\|_{L^2\left(\Omega\right)}^2+\left\|\nabla\mathbf{V}\right\|_{L^2\left(\Omega\right)}^2\left\|\mathbf{V}_{t}\right\|_{L^2\left(\Omega\right)}^2
\bigg{]},
\end{align*}
where \(C=C\left(\mu,\Omega,K,f,\hat{\delta}\right)>0.\)
Since \(\left\|\rho\right\|_{L^2\left(\Omega\right)}\) and \(\left\|\mathbf{V}\right\|_{H^{1}\left(\Omega\right)}\) are sufficient small, we have 
\[
\frac{d}{dt}\int_{\Omega}
\left[\left(\rho+\rho_{0}\right)\mathbf{V}_{t}^2-h\left|\mathbf{V}\cdot\nabla f\right|^2
\right]dxdy+\left\|\nabla\mathbf{V}_{t}\right\|_{L^2\left(\Omega\right)}^2
\leq C\left\|\nabla\mathbf{V}\right\|_{L^2\left(\Omega\right)}^2,
\]
where \(C=C\left(\mu,\Omega,K,f,\hat{\delta},\sigma\right)>0\) is independent of \(t\). From \eqref{genghao0226} and \eqref{fangqi110226}, we obtain 
\begin{align}\label{jiaer0310}
\left\|\mathbf{V}_{t}\right\|_{L^2\left(\Omega\right)}^2+\int_{0}^{t}\left\|\nabla\mathbf{V}_{s}\right\|_{L^2\left(\Omega\right)}^2ds
\leq C\left(\left\|\rho\left(0\right)\right\|_{L^2\left(\Omega\right)}^2+\left\|\mathbf{V}\left(0\right)\right\|_{H^2\left(\Omega\right)}^2\right),
\end{align}
where \(C=C\left(\mu,\Omega,K,f,h,\hat{\delta},\sigma,\rho_{0}\right)>0\) is independent of \(t\).

Then, from \eqref{genghao0226} and \eqref{xi0226}, we obtain 
\begin{align}\label{jiayi0310}
\left\|\nabla^2\mathbf{V}\right\|_{L^{2}\left(\Omega\right)}^2+\left\|\nabla P\right\|_{L^2\left(\Omega\right)}^2
\leq C\left(\left\|\rho\left(0\right)\right\|_{L^2\left(\Omega\right)}^2+\left\|\mathbf{V}\left(0\right)\right\|_{H^2\left(\Omega\right)}^2\right),
\end{align}
where \(C=C\left(\mu,\Omega,K,f,h,\hat{\delta},\sigma,\rho_{0}\right)>0\) is independent of \(t\). Thus, from \eqref{genghao0226}, \eqref{jiasan0310}, \eqref{jiaer0310} and \eqref{jiayi0310}, we can verify the conclusions desired.
\end{proof}

\section{Appendix}\label{appendix0304}
\begin{proof}\textbf{The proof of Lemma \ref{wentaijie0201}.}

Through a straightforward calculation using \(\eqref{wentifangcheng0201}_{1}\) and \(\eqref{wentifangcheng0201}_{3}\), we have
\begin{align*}
\begin{aligned}
&\int_{\Omega}\rho\mathbf{V}\cdot\nabla\mathbf{V}\cdot\mathbf{V}dxdy=-2\int_{\Omega}\nabla\rho\cdot\mathbf{V}\mathbf{V}^2dxdy=0,
\\
&\int_{\Omega}\rho\nabla f\cdot \mathbf{V}dxdy=-\int_{\Omega}\nabla\rho\cdot\mathbf{V}fdxdy=0.
\end{aligned}
\end{align*}
Next, multiply \(\eqref{wentifangcheng0201}_{2}\) by \(\mathbf{V}\) and integrate the result over \(\Omega\), we get 
\begin{align*}
\int_{\Omega}\left|\nabla\mathbf{V}\right|^2dxdy=0,
\end{align*}
which together with \(\mathbf{V}|_{\partial\Omega}=\mathbf{0}\) yields that 
\(\mathbf{V}\equiv \mathbf{0}\), that is, \(\mathbf{u}\equiv \mathbf{0}\). 

From \(\eqref{wentifangcheng0201}_{2}\), one can infer that 
\begin{align*}
\nabla P_{0}=-\rho_{0}\nabla f,
\end{align*}
which indicates that 
\(
\left(\partial_{y}\rho_{0},-\partial_{x}\rho_{0}\right)\cdot 
\left(\partial_{x}f,\partial_{y}f\right)=0.
\) Thus, \eqref{wutian0201} holds.
\end{proof}

\begin{proof}\textbf{The proof of Lemma \ref{poincarebudengshi0201}.}

First, since \(\mathbf{V}=\left(V_{1},V_{2}\right)\in \left[H_{0}^{1}\left(\Omega\right)\right]^2\) and \(\Omega\) is bounded and smooth, we can find a bounded rectangular region \(\widetilde{\Omega}=\left[-L_{1},L_{1}\right]\times \left[-L_{2},L_{2}\right]\) such that 
\(\Omega\subset \widetilde{\Omega}\) and \(\mathbf{V}\equiv\mathbf{0}\) in \(\widetilde{\Omega}/\Omega\). we only consider \(V_{1}\), as the proof for \(V_{2}\) is similar. we first
assume \(\mathbf{V}\) is smooth due to the density.

From the Newton-Leibniz formula, we obtain 
\begin{align*}
V_{1}\left(x,y\right)=\int_{-L_{1}}^{x}
\partial_{s}V_{1}\left(s,y\right)ds,~V_{1}\left(x,y\right)=\int_{-L_{2}}^{y}
\partial_{s}V_{1}\left(x,s\right)ds,
\end{align*}
which together with H\'{o}lder inequality yields that 
\begin{align*}
\left\|V_{1}\right\|_{L^2\left(\Omega\right)}=\left\|V_{1}\right\|_{L^2\left(\widetilde{\Omega}\right)}
\leq C\left(\Omega\right)\left\|\nabla V_{1}\right\|_{L^2\left(\Omega\right)}.
\end{align*}

By the mathematical induction, we assume that we have 
\begin{align}\label{xuyao0204}
\left\|V_{1}\right\|_{L^{2k}\left(\Omega\right)}\leq C\left(k,\Omega\right)\left\|\nabla V_{1}\right\|_{L^2\left(\Omega\right)}
\end{align}
Similarly, by Newton-Leibniz formula, we have 
\begin{align*}
&\left|V_{1}\left(x,y\right)\right|^{k+1}\leq (k+1)\int_{-L_{1}}^{L_{1}}
\left|\partial_{x}V_{1}\left(x,y\right)\right|\left|V_{1}\left(x,y\right)\right|^{k}dx,
\\
&\left|V_{1}\left(x,y\right)\right|^{k+1}\leq \left(k+1\right)\int_{-L_{2}}^{L_{2}}
\left|\partial_{y}V_{1}\left(x,y\right)\right|\left|V_{1}\left(x,y\right)\right|^{k}dy.
\end{align*}
Multiply the above two inequalities and integrate the result over \(\widetilde{\Omega}\), we get 
\begin{align*}
\left\|V_{1}\right\|_{L^{2\left(k+1\right)}\left(\Omega\right)}^{2(k+1)}
\leq C\left(\Omega\right)
\left\|\nabla V_{1}\right\|_{L^2\left(\Omega\right)}^{2\left(k+1\right)},
\end{align*}
where H\'{o}lder inequality and \eqref{xuyao0204} are utilized. Thus, the proof is completed.
\end{proof}

\begin{proof}\textbf{The proof of Lemma \ref{feichangzhongyao0226}.}

Utilize the integrate by parts and the H\"{o}lder inequality, we have 
\[
\left\|\nabla v\right\|_{L^2\left(\Omega\right)}^2=\int_{\Omega}\nabla v\cdot\nabla v dxdy 
=-\int_{\Omega}v\nabla^2 v dxdy
\leq \left\|v\right\|_{L^2\left(\Omega\right)}
\left\|\nabla^2 v\right\|_{L^2\left(\Omega\right)}.
\]
\end{proof}

\begin{proof}\textbf{The proof of Lemma \ref{budengshidezhengming0201}.}
    Let \(H\left(t\right)=\int_{0}^{t}
    \left\|\nabla\mathbf{V}\right\|_{L^2\left(\Omega\right)}^6ds
    \). Then by the Cauchy inequality, we have 
    \[
      H'\left(t\right)\leq C\left[H\left(t\right)\right]^3+\widetilde{C}_{1}(t+1)^3,
    \]
   where \(|\widetilde{C}_{1}-C_{1}|\) is small enough. Then, we obtain  
    \[
    \frac{d}{dt}\left[e^{-C\int_{0}^{t}\left[H\left(s\right)\right]^2ds}H\left(t\right)\right]\leq \widetilde{C}_{1}(t+1)^3.   
    \]
    which leads to
    \[
      e^{-C\int_{0}^{t}\left[H\left(s\right)\right]^2ds}H\left(t\right)\leq \widetilde{C}_{1}(t+1)^4,
    \]
    and further
    \[
      e^{-C\int_{0}^{t}\left[H\left(s\right)\right]^2ds}H^2\left(t\right)\leq \widetilde{C}_{1}^2(t+1)^8. 
    \]
    Consequently, 
    \[
      -\frac{d}{dt}e^{-C\int_{0}^{t}\left[H\left(s\right)\right]^2ds}\leq \widetilde{C}_{1}^2 (t+1)^8,
    \]
    As a result, when \(1-\widetilde{C}_{1}^2(t+1)^9>0\), we obtain
    \[
      e^{C\int_{0}^{t}\left[H\left(s\right)\right]^2ds}\leq \frac{1}{1-\widetilde{C}_{1}^2(t+1)^9},
    \]
    which implies the existence of \(T^{*}\) and \(C^{*}\). Moreover,  \(H\left(t\right)\leq C^{*}\), indicating that 
    \(\left\|\nabla\mathbf{V}\left(t\right)\right\|_{L^2\left(\Omega\right)}^2
    \leq C^{*}\).
  \end{proof}

\section*{Acknowledgments}
Acknowledgments. L. Li was supported by the Young Scientists Fund of the 
National Natural Science Foundation of China (No. 12301131).
\itemsep=0pt

\end{document}